\definecolor{dark-red}{rgb}{0.4,0.15,0.15}
\definecolor{dark-blue}{rgb}{0.15,0.15,0.4}
\definecolor{medium-blue}{rgb}{0,0,0.5}
\title[]{Immersions and the space of all translation structures}
\author[]{W. Patrick Hooper}
\thanks{Support was provided by N.S.F. Grant DMS-1101233 and a PSC-CUNY Award (funded by The Professional Staff Congress and The City University of New York).}
\address{
The City College of New York\\
New York, NY, USA 10031}
\email{whooper@ccny.cuny.edu}
\date{\today}
\newtheorem{theorem}{Theorem}
\newtheorem{proposition}[theorem]{Proposition}
\newtheorem{lemma}[theorem]{Lemma}
\newtheorem{remark}[theorem]{Remark}
\newtheorem{corollary}[theorem]{Corollary}
\theoremstyle{definition}
\newlength{\savearraycolsep}
	{\setlength{\savearraycolsep}{\arraycolsep}%
	\setlength{\arraycolsep}{#1}%
	\begin{array}{#2}}%
	{\end{array}\setlength{\arraycolsep}{\savearraycolsep}}
\newcommand{\red}[1]{{\textcolor{red}{#1}}}
\def\N{\mathbb{N}}%
\def\Q{\mathbb{Q}}%
\def\R{\mathbb{R}}%
\def\Z{\mathbb{Z}}%
\def\GL{\textit{GL}}
\def\0{{\mathbf{0}}}
\def\1{{\mathbf{1}}}
\def\u{{\mathbf{u}}}
\def\bu{{\mathbf{u}}} %
\def\bv{{\mathbf{v}}} %
\def\v{{\mathbf{v}}} %
\def\bw{{\mathbf{w}}}
\def\sA{{\mathcal{A}}}
\def\sB{{\mathcal{B}}}
\def\sE{{\mathcal{E}}}
\def\sI{{\mathcal{I}}}
\def\sJ{{\mathcal{J}}}
\def\sK{{\mathcal{K}}}
\def\sM{{\mathcal{M}}}
\def\sO{{\mathcal{O}}}
\def\sP{{\mathcal{P}}}
\def\sR{{\mathcal{R}}}
\def\sS{{\mathcal{S}}}
\def\sU{{\mathcal{U}}}
\def\sV{{\mathcal{V}}}
\def\sX{{\mathcal{X}}}
\def\sY{{\mathcal{Y}}}
\def\imod#1{\allowbreak\mkern10mu({\operator@font mod}\,\,#1)}
\def\and{{\quad \textrm{and} \quad}}
\newtheorem{convention}[theorem]{Convention}
\newcommand{\PC}{{\textrm{PC}}}
\newcommand{\SC}{{\textrm{SC}}}
\newcommand{\disk}{{\textrm{Disk}}}
\newcommand{\cdisk}{{\overline{\disk}}}
\newcommand{\BC}{{\textrm{BC}}}
\newcommand{\ER}{{\textit{ER}}}
\newcommand{\BE}{{\textit{BE}}}
\newcommand{\tM}{{\tilde \sM}}
\newcommand{\tE}{{\tilde \sE}}
\newcommand{\tpi}{{\tilde \pi}}
\def\emb{\hookrightarrow}
\def\imm{\rightsquigarrow}
\def\dev{{\textit{Dev}}} 
\def\fuse{\curlyvee}
\newif\ifdraft\drafttrue
\newcommand{\name}[1]{\label{#1}{\ifdraft{\textcolor{blue}{\{\textrm{#1}\}}}\else\ignorespaces\fi}}
\definecolor{darkgreen}{rgb}{0,0.8,0}
\newcommand{\com}[1]{\ifdraft{\textcolor{darkgreen}{#1}}\else\ignorespaces\fi}
\begin{document}
\begin{abstract}
A translation structure on a surface is an atlas of charts to the plane so that the transition functions are translations. We allow our surfaces to be non-compact and infinite genus. We endow the space of all pointed surfaces equipped with a translation structure with a topology, which we call the immersive topology because it is related to the manner in which disks can be immersed into such a surface. We prove that a number of operations typically done to translation surfaces are continuous with respect to the topology. We show that the 
topology is Hausdorff, and that the collection of surfaces with a fixed lower bound on the injectivity radius at the basepoint is compact.
\end{abstract}
\maketitle

\section{Introduction}
A {\em translation structure} on a surface is an atlas of charts to the plane where the transition functions are translations. There is a natural notion of when two such structures are isomorphic (as we explain in \S \ref{sect:translation structures}), and we use the term {\em translation surface} to mean an isomorphism
class of translation structures. A {\em pointed translation surface} is a connected 
translation surface together with a choice of a basepoint.

Translation surfaces are relatively simple geometric objects. Despite their simplicity, there are natural geometric and dynamical questions about these surfaces whose difficulty varies greatly based on the surface chosen. 
	
The primary goal of this article is to topologize the collection of {\em all} pointed translation surfaces.
That is, the space will include translation surfaces of any topological type admitting a translation structure. The topology is designed to allow 
sequences of infinite genus surfaces to converge, and to allow finite genus surfaces to limit on surfaces of infinite genus. The topology also makes it easy to pass geometric information back and forth from the limiting surface to the approximates.

As motivation, we note that translation surfaces come in great variety, and highlight a few examples of interest:
\begin{itemize}
\item The only closed surfaces admitting translation structures are tori. 
\item Oriented surfaces, $S_{g,n}$, of genus $g \geq 2$ with $n \geq 1$ punctures always admit translation structures
whose metric completions are homeomorphic to closed surfaces of genus $g$. Such metric completions introduce cone singularities with cone angles in $2 \pi \Z$. 
\item A natural unfolding construction associates a translation surface to every polygonal billiard table \cite{ZK}.
If the polygon is {\em irrational} (has angles which are irrational multiples of $\pi$), then the translation surface has infinite area and infinite genus.
\end{itemize}
These examples are listed in order of our understanding. The space of translation structures on tori is classically identified with the space of lattices in the plane, $\GL(2,\R)/\GL(2,\Z)$. 
The completions of genus $g \geq 2$ surfaces are traditionally just called translation surfaces, but we will call them {\em finite genus translation surfaces}.
The moduli space of such translation surfaces of genus $g$ can be identified with a vector bundle (with the 
zero section removed) over the moduli spaces of Riemann surfaces of genus $g$. 
These surfaces have been actively studied for the past 35 years.
We refer the reader to the survey articles \cite{MT} and \cite{Zorich06} for an introduction to the subject.
At the other extreme, billiards in irrational polygons are not well understood at all. Basic questions such as the existence of periodic billiard orbits remain unanswered. (Existence of periodic billiard paths is equivalent
to the existence of closed geodesics in the associated translation surface.) We refer the reader to \cite{T05} for background on polygonal billiards.

The space of all translation surfaces is attractive because it contains all the surfaces mentioned
in the above. In particular, it is natural to wonder which surfaces in this space can be best understood (from various viewpoints). 
Perhaps a reasonable expectation is that an infinite genus limit of the best understood finite genus translation surfaces
should be easier to understand than some high genus translation surfaces. 

Indeed, work from the past few years on the geometry and dynamics of infinite genus translation surfaces seems to suggest that there are many such surfaces which can be well understood. There has been widespread recent interest
in the study of abelian covers of closed finite genus translation surfaces. Many such articles studied covers of {\em lattice surfaces} (maximally symmetric finite genus translation surfaces in the sense of Veech \cite{V}):
\cite{HS09}, \cite{HHW10}, \cite{Troubetzkoy10}, \cite{DHL11}, \cite{HLT11}, \cite{Schmoll11preprint}, \cite{HW13}, and \cite{Delecroix13}.
A few papers have studied $\Z$-covers of less symmetric finite genus translation surfaces including
\cite{HW10}, \cite{FUpreprint} and \cite{FUstrip}, \cite{RTarxiv11}, and \cite{RTarxiv12}.
A couple of articles have studied surfaces arising from limiting procedures such as \cite{Bowman13} and \cite{Higl1}.
And a number of articles have studied surfaces with no apparent connection to the finite genus case including
\cite{PSV11}, \cite{BV13}, \cite{TrevinoFinite} and \cite{Hinf}.

Aside from the perceived practical benefits for defining the topology, it is hoped that by formally placing all translation surfaces in the same space, we will provide a context for better understanding progress in the subject.

We briefly discuss the layout of this paper. In \S \ref{sect:translation structures},
we give a set-theoretic description of the moduli space of all translation structures. We also introduce the canonical bundle of translation surfaces over this moduli space. In \S \ref{sect:immersions}, we introduce the idea of immersing one subset of a pointed translation surface into another pointed translation surface. We use this idea to place a topology on the space of pointed translation surfaces. In \S \ref{sect:results}, we describe our main results for this topology. We are interested in understanding this topological space and showing that a number of natural operations done on pointed translation surfaces are continuous. In \S \ref{sect:approach}, we explain our approach to the proofs of the main results. We will utilize our understanding of translation structures on the disk developed in \cite{HooperImmersions1}. In \S \ref{sect:outline}, we outline the remainder of the paper and describe where the main results are proved.

\section{Translation structures}
\name{sect:translation structures}
\subsection{Definition of translation structure}
We take a view of translation structures following the more general idea of a $(G,X)$-structure. See \cite{Thurston}
for an introduction to these ideas.

In this paper, we use the term {\em surface} to mean a connected oriented $2$-manifold $X$ without boundary (which may or may not be closed).
An {\em atlas of charts} from a surface $X$ to $\R^2$ is a collection of orientation preserving 
local homeomorphisms from open subsets of
$X$ to $\R^2$ so that the collection of domains
of these maps cover $X$.
We denote the choice of an atlas by a set of pairs consisting
of the domain and the map, $\sA=\{(U_j,\phi_j)\}$.
Such an atlas $\sA$ is a {\em translation atlas} if for every choice of $i$ and $j$ so that $U_i \cap U_j \neq \emptyset$, the associated {\em transition function},
$$\phi_i \circ \phi_j^{-1}|_{\phi_j(U_i \cap U_j)}~:~\phi_j(U_i \cap U_j) \to \R^2$$
is locally a restriction of a translation. 
A translation atlas on $X$ is {\em maximal} if it is not properly contained in any translation atlas on $X$.
A {\em translation structure} is a pair $(X,\sA)$, where $X$ is a surface and $\sA$ is a maximal translation atlas on $X$. 

Because transition functions are translations, a translation atlas determines a metric on $X$ via pullback.
It also determines a canonical trivialization of the tangent bundle of $X$, $TX \cong X \times \R^2$, by pulling back the standard trivialization of the unit tangent bundle of $\R^2$, $T \R^2 \cong \R^2 \times \R^2$.

\subsection{Translation isomorphisms and moduli space}
\name{sect:notions}
Let $(X,\sA)$ and $(Y,\sB)$ be translation structures, and let 
$\sA=\{(U_i,\phi_i)\}$ and $\sB=\{(V_j,\psi_j)\}$.
A homeomorphism $h:X \to Y$ 
is a {\em translation isomorphism} from the structure $\sA$ to the structure $\sB$ 
if 
$$\sB=\Big\{\big(h(U_i),~\phi_i \circ h^{-1}:h(U_i) \to \R^2\big)\Big\}.$$
That is, we require that the push forward of the atlas $\sA$ under $h$ is the same as the atlas $\sB$.
If such an isomorphism exists we call $(X,\sA)$ and $(Y,\sB)$ {\em translation isomorphic}. 

In order to put a topology on the space of translation structures, we will need to add a basepoint.
A {\em pointed surface} is a pair $(X,x_0)$ consisting of a connected surface $X$ and a {\em basepoint} $x_0 \in X$. 
A {\em pointed translation structure} is a triple $(X,x_0,\sA)$ where $(X,x_0)$ is a pointed surface and $(X,\sA)$ is a translation structure. An isomorphism between the pointed translation structures $(X,x_0,\sA)$ and $(Y,y_0,\sB)$ 
is a translation isomorphism $h:X \to Y$ which respects the basepoints: $h(x_0)=y_0$. If such an isomorphism exists, we call the structures {\em isomorphic}. We write $[X,x_0,\sA]$ to denote the isomorphism class of the pointed translation structure $(X,x_0,\sA)$. 

The {\em moduli space $\sM$ of all (pointed) translation structures} is the collection of all isomorphism classes
of pointed translation structures. This moduli space is naturally partitioned according to the homeomorphism type of the underlying surface. Given $(X,x_0)$, we let $\sM(X,x_0)$ denote the collection of all isomorphism classes of the form $[X,x_0,\sA]$. Then we have
$$\sM=\bigcup_{(X,x_0) \in \sS} \sM(X,x_0),$$
where $\sS$ is a collection of pointed surfaces
with one representative from each homeomorphism class of surfaces.

\subsection{Bundle structure}
\name{sect:bundle}
We will describe a canonical (set-theoretic) surface bundle over $\sM$
so that the fiber over a point in $\sM(X,x_0)$ is isomorphic to $X$. 

Consider the collection of all pointed translation surfaces with an additional point selected,
$$E=\{(X,x_0,\sA;x)~:~\text{$(X,x_0,\sA)$ is a translation structure and $x \in X$}\}.$$
A homeomorphism $h:X \to X$ is a {\em two pointed isomorphism} from $(X,x_0,\sA;x) \in E$ to 
$(Y,y_0,\sB;y) \in E$ if it is an isomorphism from $(X,x_0,\sA)$ to $(Y,y_0,\sB)$ and satisfies
$h(x)=y$. We write $[X,x_0,\sA;x]$ to denote the two pointed isomorphism class of $(X,x_0,\sA;x)$.
We define the {\em total space $\sE$ of translation surfaces} to be the collection of all 
two pointed isomorphism classes.

Note that there is a canonical projection 
$$\pi:\sE \to \sM; \quad [X,x_0,\sA;x] \mapsto [X,x_0,\sA].$$
Moreover, the fibers of this projection are endowed with a natural translation surface structure.
To see this, choose an $[X,x_0,\sA] \in \sM$ and a translation structure $(X,x_0,\sA)$ from the isomorphism
class $[X,x_0,\sA]$. 
Let $\sA=\{(U_i,\phi_i)\}$, and let
$S=\pi^{-1}([X,x_0,\sA]) \subset \sE$ be the fiber. Observe that associated to 
choice of $(X,x_0,\sA) \in [X,x_0,\sA]$, we have a canonical bijection
$$\gamma:X \to S; \quad x \mapsto [X,x_0,\sA;x].$$
We can use $\gamma$ to push the topology from $X$ onto $S$. This makes $S$ a surface. Furthermore,
this topology is independent of our choice. In addition, we get a translation structure on $S$ from the atlas $$\big\{\big(\gamma(U_i), \phi_i \circ \gamma^{-1}\big)\}.$$
Again, this atlas is independent of the choice of translation structure from the isomorphism class $[X,x_0,\sA;x]$.
In summary, we have endowed each fiber $S=\pi^{-1}([X,x_0,\sA])$ 
with a translation structure which is isomorphic to each of the translation structures in the equivalence class $[X,x_0,\sA;x]$.

\begin{convention}
\name{conv1}
A {\em translation surface} is a fiber of the projection $\pi:\sE \to \sM$
endowed with the topology of a surface and a translation structure as described above.
We will denote a translation surface by capitol letters such as $R$, $S$ or $T$,
and points on the translation surface by lower case letters such as $r$, $s$, or $t$.
A translation surface $S=\pi^{-1}([X, x_0, \sA])$
has a canonical basepoint, $o_S=[X,x_0,\sA;x_0]$. 
We will identify each point in $\sM$ with the translation surface above the point.
This allows us to write $S \in \sM$ without reference to equivalence classes.
Note that $S \subset \sE$, so points in the translation surface $S$ also belong to $\sE$.
But, we will only rarely want to refer to a point $s \in \sE$ without referring to the translation surface
$S=\pi(s)$ on which the point lies. Frequently, we will redundantly write points of $\sE$ as
pairs $(S,s) \in \sE$ where $S=\pi(s)$. 
\end{convention}

\section{Immersions and topologies}
\name{sect:immersions}

\subsection{Definition of immersion}
Let $S$ be a translation surface. We let $\PC(S)$ denote the collection of all path-connected subsets of $S$ that contain the basepoint $o_S$. 

Let $S$ and $T$ be a translation surfaces.
Let $A \in \PC(S)$ and let $B \subset T$ be an arbitrary subset. An {\em immersion} 
of $A$ into $B$ is a continuous map 
$\iota:A \to B$ which respects the basepoint and the translation structures. That is, we require:
\begin{itemize}
\item $\iota(o_S)=o_T$.
\item For all $s \in A$, there is a choice 
chart $(U,\phi)$ in the maximal translation atlas of $S$, with $s \in U$,
a chart $(V, \psi)$ in the maximal translation atlas of $T$,
and a vector $\v \in \R^2$ so that
$\iota(U \cap A) \subset V$ and
$$\psi \circ \iota(s')=\v+\phi(s') \quad
\text{for all $s' \in U \cap A$}.$$
\end{itemize}
If there is an immersion
of $A$ into $B$, we say $A$ {\em immerses} in $B$ and write $A \imm B$. We write
$\exists \iota:A \imm B$ to represent the statement ``there is an immersion, $\iota$
from $A$ into $B$.'' If there is no such immersion, we write $A \not \imm B$.

\begin{proposition}
If there is an immersion $\iota:A \imm B$, then it is unique.
\end{proposition}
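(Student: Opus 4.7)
The plan is to show uniqueness by a connectedness argument along paths in $A$, exploiting that immersions are locally rigidly determined in translation charts.

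Suppose $\iota_1,\iota_2:A \to B$ are both immersions. I would first show that the two maps agree on an open subset of $A$ wherever they agree at a single point. Precisely, suppose $s_0 \in A$ with $\iota_1(s_0)=\iota_2(s_0)=t_0 \in T$. Choose any chart $(U,\phi)$ in the maximal atlas of $S$ with $s_0 \in U$ and any chart $(V,\psi)$ in the maximal atlas of $T$ with $t_0 \in V$. Applying the immersion hypothesis to each $\iota_j$ at $s_0$ and using that the transition maps between any two overlapping charts are locally translations, we can (after shrinking $U$ and $V$) express each $\iota_j$ on $U \cap A$ in the form $\psi \circ \iota_j(s') = \v_j + \phi(s')$ for some $\v_j \in \R^2$. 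Evaluating at $s_0$ gives $\psi(t_0)=\v_j + \phi(s_0)$ for both $j$, forcing $\v_1=\v_2$ and hence $\iota_1=\iota_2$ on $U \cap A$.

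Next, let $s \in A$ be arbitrary. Since $A$ is path-connected and contains $o_S$, there is a continuous path $\gamma:[0,1] \to A$ with $\gamma(0)=o_S$ and $\gamma(1)=s$. Consider
\[
I = \{\tau \in [0,1] : \iota_1(\gamma(\tau)) = \iota_2(\gamma(\tau))\}.
\]
This set contains $0$ because immersions preserve basepoints, so $\iota_1(o_S)=o_T=\iota_2(o_S)$. It is closed in $[0,1]$ because $T$ is Hausdorff and both $\iota_j \circ \gamma$ are continuous. To see it is open, fix $\tau_0 \in I$ and apply the local agreement result at $s_0 = \gamma(\tau_0)$: there is an open neighborhood $U$ of $s_0$ in $S$ on which $\iota_1$ and $\iota_2$ coincide on $U \cap A$. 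By continuity of $\gamma$, the preimage $\gamma^{-1}(U)$ is an open neighborhood of $\tau_0$ in $[0,1]$ contained in $I$. Since $[0,1]$ is connected, $I=[0,1]$, and in particular $\iota_1(s)=\iota_2(s)$.

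I expect the only delicate point to be the chart bookkeeping in the local agreement step: the chart containing $t_0$ provided by the definition of immersion for $\iota_1$ is a priori different from the one provided for $\iota_2$, so one must use that the transition between any two overlapping charts in a translation atlas is (locally) a translation in order to compare the two ``$\v$'' vectors in a common frame. Once this is handled, the rest is a routine path-connectedness argument, and no hypothesis on $B$ beyond the existence of $\iota_j$ is used.
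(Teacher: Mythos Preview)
Your argument is correct and is exactly what the paper means by ``the immersion is determined by analytic continuation'': you have simply unpacked that phrase into the local rigidity step (agreement at a point forces agreement on a neighborhood, via the translation-chart computation) followed by the standard open--closed argument along a path from the basepoint. The paper's proof is a one-line appeal to this principle; your version is the same proof written out.
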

\begin{proof}
Since $A$ is path-connected and our immersion must respect the basepoint, the immersion is determined by analytic continuation.
\end{proof}

An injective immersion is called an {\em embedding}. We denote the statement ``there exists an embedding $e$ of $A$ into $B$'' by
$\exists e:A \emb B$. We follow the same notational scheme as for immersions.

\begin{corollary}
Suppose $A \in \PC(S)$ and $B \in \PC(T)$. If $A \imm B$ and $B \imm A$,
then both immersions are embeddings, and the two embeddings are inverses of one another.
\end{corollary}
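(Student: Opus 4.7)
The plan is to reduce the statement to two applications of the uniqueness proposition (uniqueness of immersions) via the compositions $\kappa \circ \iota$ and $\iota \circ \kappa$, where $\iota : A \imm B$ and $\kappa : B \imm A$ are the given immersions.

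First I would verify the (essentially formal) fact that the composition of two immersions is again an immersion. If $\iota : A \to B$ and $\kappa : B \to A'$ both respect basepoints and are locally given by translations in the respective charts, then at any point $s \in A$ one can choose charts $(U,\phi)$ about $s$ on which $\iota$ is $\v + \phi$ in some chart $(V,\psi)$ on the target, and then a chart $(V',\psi')$ on which $\kappa$ is $\w + \psi$ in a chart $(W,\chi)$ further on. Shrinking $U$ if necessary so that $\iota(U \cap A) \subset V \cap V'$, the composition $\kappa \circ \iota$ is $(\v + \w) + \phi$ in the chart $(W,\chi)$. Basepoints are preserved by composing basepoint-preserving maps. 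So $\kappa \circ \iota$ is an immersion of $A$ into $A$, and similarly $\iota \circ \kappa$ is an immersion of $B$ into $B$.

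Next, note that since $A \in \PC(S)$, the inclusion $A \hookrightarrow A$ (viewing the target as a subset of $S$) is itself an immersion, namely the identity on $A$. Likewise the identity on $B$ is an immersion $B \imm B$ since $B \in \PC(T)$. By the uniqueness proposition applied to immersions $A \imm A$ and $B \imm B$, we conclude
\[
\kappa \circ \iota = \id_A \quad\text{and}\quad \iota \circ \kappa = \id_B.
\]

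Finally, from $\kappa \circ \iota = \id_A$ it follows that $\iota$ is injective (and $\kappa$ is surjective), and from $\iota \circ \kappa = \id_B$ that $\kappa$ is injective (and $\iota$ is surjective). Hence both maps are bijections, and the two identities exhibit $\kappa$ as the inverse of $\iota$. By definition, each is an embedding. There is no real obstacle here: the only subtle point is ensuring that the identity map on $A$ (respectively $B$) genuinely satisfies the definition of an immersion so that the uniqueness proposition can be invoked, which is immediate from the assumption that $A \in \PC(S)$ and $B \in \PC(T)$.
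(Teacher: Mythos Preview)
Your proof is correct and follows essentially the same approach as the paper: compose the two immersions and invoke uniqueness to identify the composition with the identity. The paper's proof is just a terser version of yours, stating only that the identity on $A$ and the composition $A \imm B \imm A$ must agree by uniqueness, and leaving the remaining details (including the other direction and the verification that compositions of immersions are immersions) implicit.
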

\begin{proof}
Both the identity map on $A$ and the composition $A \imm B \imm A$ are immersions. They
are identical, because of the uniqueness of immersions. The conclusion follows. 
\end{proof}

If the statement of the corollary is satisfied for $A$ and $B$, then we say these are {\em isomorphic subsets} of translation surfaces. Isomorphic subsets are indistinguishable from the point of view of immersions and embeddings:

\begin{corollary}
\name{prop:indistinguishable}
\com{Used.}
Let $A \in \PC(S)$ and $B \subset T$.
The truth of the statements $A \imm B$ and $A \emb B$ do not depend
on the choice of representative of $A$ from its isomorphism class. 
If $B \in \PC(T)$, the same holds for the choice of $B$ from its isomorphism class.
\end{corollary}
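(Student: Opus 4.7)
The plan is to show that pre- and post-composition with the mutually inverse embeddings furnished by the isomorphism of subsets transports immersions and embeddings in both directions, which immediately yields the ``independence of representative'' statements.

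First I would handle the $A$-side. Suppose $A \in \PC(S)$ and $A' \in \PC(S')$ are isomorphic subsets, so by the previous corollary there exist mutually inverse embeddings $\alpha: A \emb A'$ and $\alpha': A' \emb A$. If $\iota: A \imm B$ is an immersion, then the composition $\iota \circ \alpha' : A' \to B$ is continuous, sends $o_{S'}$ to $o_T$ (by the basepoint conditions of $\alpha'$ and $\iota$), and near each point of $A'$ is locally the restriction of a translation on charts (since a composition of two maps each of which is locally a restriction of a translation is again locally a restriction of a translation). Thus $\iota \circ \alpha'$ is an immersion $A' \imm B$. The reverse direction uses $\alpha$ in place of $\alpha'$, so $A \imm B \iff A' \imm B$. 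If moreover $\iota$ is injective, then $\iota \circ \alpha'$ is a composition of an injection with a bijection (namely $\alpha'$, since $\alpha$ and $\alpha'$ are mutually inverse), hence injective; this shows $A \emb B \iff A' \emb B$.

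Next I would handle the $B$-side under the additional hypothesis $B \in \PC(T)$. Suppose $B' \in \PC(T')$ is isomorphic to $B$ via mutually inverse embeddings $\beta: B \emb B'$ and $\beta': B' \emb B$. Given $\iota: A \imm B$, the composition $\beta \circ \iota : A \to B'$ is again continuous, basepoint-preserving, and locally a translation on charts, so it is an immersion $A \imm B'$; the reverse direction uses $\beta'$. Injectivity is again preserved because $\beta$ is a bijection onto $B'$, so $A \emb B \iff A \emb B'$.

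The only point that could appear to be a genuine obstacle is verifying that the composition of two maps which are ``locally restrictions of translations in charts'' is itself of that form; but this is immediate once one unwinds the definition, since a translation composed with a translation is a translation. All the other steps are purely formal manipulations with the definitions of immersion and embedding, together with the uniqueness-of-immersions proposition used in the previous corollary. I would expect the written proof to be a short paragraph rather than a multi-step argument.
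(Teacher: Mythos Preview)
Your proposal is correct and is exactly the argument one would write to fill in the details; the paper itself states this corollary without proof, treating it as an immediate consequence of the preceding corollary (that isomorphic subsets are related by mutually inverse embeddings) together with the obvious fact that composing basepoint-preserving local translations yields a basepoint-preserving local translation.
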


\subsection{\texorpdfstring{The immersive topology on $\sM$}{The immersive topology on M}}
\name{sect:top M}
Let $\PC$ denote the collection of all path connected subsets of translation surfaces that contain the basepoint. 
An {\em open disk} is a set in $\PC$ that is homeomorphic to an open disk.
A {\em closed disk} is a set in $\PC$ that is homeomorphic to a closed disk and contains the basepoint in
its interior.
We denote the collection of all closed disks in a translation surface $S$ by $\cdisk(S)$,
and the set of all open disks in $S$ by $\disk(S)$. However, 
we will frequently refer to open and closed disks without referring to the surface which contains them.

The {\em immersive topology} on $\sM$ is the coarsest topology so that the following list of sets are all open:
\begin{itemize}
\item Sets of the form $\sM_{\imm}(D)=\{S \in \sM~:~D \imm S\},$ where $D$ is a closed disk.
\item Sets of the form $\sM_{\not \imm}(U)=\{S \in \sM~:~U \not \imm S\}$, where $U$ is an open disk.
\item Sets of the form 
$$\sM_+(D,U)=\{S \in \sM~:~\text{$\exists \iota:D \imm S$ and $o_S \in \iota(U)$}\},$$
where $D$ is a closed disk and $U$ is an open subset of the interior, $D^\circ$.
\item Sets of the form
$$\sM_{-}(D,K)=\{S \in \sM~:~\text{$\exists \iota:D \imm S$ and $o_S \not \in \iota(K)$}\},$$
where $D$ is a closed disk and $K \subset D$ is closed.
\end{itemize}

We will find the following results useful:
\begin{theorem}[Embedding Theorem]
\name{thm:embedding}
If $D$is a closed disk, then $\sM_{\emb}(D)$ is open.
\end{theorem}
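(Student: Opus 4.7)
The plan is to fix an arbitrary $S_0 \in \sM_{\emb}(D)$ with embedding $e_0 : D \to S_0$ and exhibit a sub-basic open neighborhood of $S_0$ contained in $\sM_{\emb}(D)$. The first step is to thicken $D$ inside $S_0$: since $e_0(D) \subset S_0$ is a compact topological closed disk in an open $2$-manifold, its boundary is a Jordan curve that can be pushed slightly outward, yielding an embedded closed disk $D^+ \subset S_0$ with $e_0(D) \subset (D^+)^\circ$. Viewing $D^+$ as an abstract closed disk based at $o_{S_0}$, the set $\sM_{\imm}(D^+)$ is sub-basic open and contains $S_0$, since $D^+ \imm S_0$.

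One might hope that $\sM_{\imm}(D^+) \subset \sM_{\emb}(D)$ already, but this is false in general; nothing in ``$D^+ \imm S$'' rules out short holonomy in $S$ producing self-identifications inside $D$. Additional sub-basic conditions are therefore required, and the plan is to intersect $\sM_{\imm}(D^+)$ with a set of the form $\sM_{-}(D^{++}, K)$, where $D^{++} \supset D^+$ is a further abstract closed disk that immerses (though need not embed) in $S_0$, and $K \subset D^{++}$ is a closed set chosen relative to $S_0$.

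The underlying observation is that if $\iota := \iota^+|_D : D \to S$ fails to be injective for some $S$ in the candidate neighborhood, then any pair $p \neq q$ in $D$ with $\iota(p) = \iota(q)$ produces a non-zero holonomy vector $\v := \dev_{D^+}(p) - \dev_{D^+}(q)$ of $S$ lying in the bounded set $V := \bigl(\dev_{D^+}(D) - \dev_{D^+}(D)\bigr) \sm \{\0\}$; conversely, any element of the holonomy of $S$ in $V$ should produce such an identification. I would arrange $D^{++}$ so that its developing image contains $V$, and take $K$ to be a closed subset of $D^{++}$ whose developed image covers $V$ while carefully avoiding the discrete set of preimages of $o_{S_0}$ under $\iota_0^{++} : D^{++} \imm S_0$ (themselves determined by the holonomy of $S_0$). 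The condition $S \in \sM_{-}(D^{++}, K)$ then forbids preimages of $o_S$ in $K$, which via lifting to the universal cover of $S$ translates to the holonomy group of $S$ being disjoint from $V$, and this in turn forces $\iota$ to be injective, giving $S \in \sM_{\emb}(D)$. The main obstacle in this plan is the precise correspondence between preimages of the basepoint under an immersion of a closed disk and elements of the holonomy of the target surface, together with the combinatorial construction of $K$ that separates the $S_0$-preimage pattern from those arising for surfaces with extraneous holonomy vectors in $V$; I expect the machinery developed for translation structures on disks in \cite{HooperImmersions1} to supply the clean statement needed.
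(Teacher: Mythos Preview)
Your plan has a real gap at the step asserting that $S \in \sM_{-}(D^{++}, K)$ forces the holonomy group of $S$ to avoid $V$. A point $x \in D^{++}$ satisfies $\iota^{++}(x)=o_S$ exactly when $\tilde\iota^{++}(x)$ lies in the deck-group orbit of $\tilde o_S$ in $\tilde S$; the developing images of orbit points are indeed holonomy vectors, but only those orbit points that happen to land in the compact set $\tilde\iota^{++}(D^{++}) \subset \tilde S$ are visible from $D^{++}$. Since $D^{++}$ is fixed in advance while $\tilde S$ varies with $S$, nothing guarantees that for every holonomy vector $\v \in V$ the corresponding lift of $o_S$ lies in $\tilde\iota^{++}(D^{++})$; for a surface with non-abelian $\pi_1$ the deck orbit is infinite while $\tilde\iota^{++}(D^{++})$ meets only finitely many of its points, so most holonomy vectors go undetected and your implication fails. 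There is also a more elementary obstruction to constructing $D^{++}$: you need $\dev(D^{++}) \supset V = \big(\dev(D)-\dev(D)\big)\smallsetminus\{\0\}$, but any $D^{++}$ immersing in $S_0$ has $\dev(D^{++}) \subset \dev(\tilde S_0)$, and the latter can be a proper subset of $\R^2$ not containing $V$ (take $S_0$ a half-plane and $D$ a disk whose difference set sticks out past the boundary).

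The paper's proof avoids holonomy entirely and works locally on $D\times D$. For each pair $(u,v)$ it uses the Fusion Theorem to build an auxiliary closed disk $Q$ (fusing thickenings of $D$ rebased at $u$ and at $v$) and shows that a specific combination of sets of type $\sM_{\not\imm}$ and $\sM_{-}$ built from $Q$ coincides with the set of $S$ for which the immersion keeps a small ball about $u$ disjoint from a small ball about $v$. Compactness of $D\times D$ then reduces to a finite intersection of such open sets. The pair-by-pair construction is precisely what makes the basepoint-preimage condition match the identification condition: each auxiliary disk is tailored to one pair $(u,v)$ rather than trying to encode a global invariant of $S$, which is the missing idea in your sketch.
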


\begin{theorem}[Disjointness Theorem]
\name{thm:disjointness}
If $D \in \PC$ is a closed disk, and $K_1$ and $K_2$ are disjoint closed subsets of $D$, then the following set
is open in $\sM$:
$$\sM_{\emptyset}(D;K_1,K_2)=\{S \in \sM~:~ \text{$\exists \iota:D \imm S$ and $\iota(K_1) \cap \iota(K_2)=\emptyset$}\}.$$
\end{theorem}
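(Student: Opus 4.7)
The plan is to reduce the Disjointness Theorem to the Embedding Theorem~\ref{thm:embedding}. Fix $S_0 \in \sM_\emptyset(D; K_1, K_2)$ with immersion $\iota_0: D \imm S_0$ satisfying $\iota_0(K_1) \cap \iota_0(K_2) = \emptyset$. I will produce a basic open neighborhood of $S_0$ inside $\sM_\emptyset(D; K_1, K_2)$, built as a finite intersection of sets of the form $\sM_\emb(\hat D)$.

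Since $K_1, K_2$ are compact and their images are disjoint in $S_0$, the distance $\delta := d_{S_0}(\iota_0(K_1), \iota_0(K_2))$ is positive. By the compactness of $K_1 \cup K_2$ and the fact that $\iota_0$ is a local isometry, there is $r \in (0, \delta/3)$ so that for every $x \in K_1 \cup K_2$, $\iota_0$ maps $B_D(x, r)$ isometrically onto a flat disk in $S_0$. Cover $K_1 \subset \bigcup_{i=1}^m B_D(p_i, r/2)$ with $p_i \in K_1$ and $K_2 \subset \bigcup_{j=1}^n B_D(q_j, r/2)$ with $q_j \in K_2$; then $\iota_0(\overline{B_D(p_i, r/2)})$ and $\iota_0(\overline{B_D(q_j, r/2)})$ are disjoint flat disks in $S_0$ for every $i, j$.

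For each pair $(i, j)$, pick paths $\gamma_i, \gamma'_j$ in $D$ from $o_D$ to $p_i, q_j$ and construct a closed disk $\hat D_{ij} \subset S_0$ with $o_{S_0}$ in its interior, containing the disjoint flat disks $\iota_0(\overline{B_D(p_i, r/2)}), \iota_0(\overline{B_D(q_j, r/2)})$ together with the path images $\iota_0(\gamma_i) \cup \iota_0(\gamma'_j)$. Since $\hat D_{ij}$ sits as a closed disk in $S_0$, we have $S_0 \in \sM_\emb(\hat D_{ij})$, an open set by the Embedding Theorem. Set
\[
\sU = \sM_\imm(D) \cap \bigcap_{i,j} \sM_\emb(\hat D_{ij}),
\]
an open neighborhood of $S_0$. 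For $S \in \sU$, let $\iota_S: D \imm S$ and $\hat\iota_{ij}: \hat D_{ij} \emb S$. The composition $\hat\iota_{ij} \circ \iota_0$ is an immersion of $\iota_0^{-1}(\hat D_{ij}) \subset D$ into $S$ respecting the basepoint, so by uniqueness of immersions it agrees with $\iota_S$ on the path-component $\Omega_{ij} \ni o_D$ of $\iota_0^{-1}(\hat D_{ij})$. Because $\hat D_{ij}$ contains $\iota_0(\gamma_i \cup \gamma'_j)$ together with the two end disks, $\Omega_{ij}$ contains both $B_D(p_i, r/2)$ and $B_D(q_j, r/2)$. Since the two end disks are disjoint subsets of $\hat D_{ij}$ and $\hat\iota_{ij}$ is injective, $\iota_S(B_D(p_i, r/2)) \cap \iota_S(B_D(q_j, r/2)) = \emptyset$; taking the union over $i, j$ gives $\iota_S(K_1) \cap \iota_S(K_2) = \emptyset$, as desired.

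The main obstacle is realizing $\hat D_{ij}$ as a closed disk actually embedded in $S_0$: the curves $\iota_0(\gamma_i)$ and $\iota_0(\gamma'_j)$ may self-intersect or cross one another inside $S_0$, so a naive tubular thickening of their union need not be a topological disk. I would resolve this by covering each of $\gamma_i$ and $\gamma'_j$ in $D$ by a finite chain of small flat embedded balls whose $\iota_0$-images are also flat embedded balls in $S_0$, and then taking $\hat D_{ij}$ as a regular neighborhood in $S_0$ of a spanning sub-tree of the resulting graph of image balls that passes through $o_{S_0}$, $\iota_0(p_i)$, and $\iota_0(q_j)$. Because $S_0$ is a $2$-manifold, any relatively compact simply-connected open subset is an open disk, so such a regular neighborhood of a tree contains a closed disk with $o_{S_0}$ in its interior suitable for the construction.
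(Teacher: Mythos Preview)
Your reduction to the Embedding Theorem has a genuine gap at exactly the point you flag as the ``main obstacle,'' and the spanning-tree fix does not close it. The argument hinges on the claim that the basepoint component $\Omega_{ij}$ of $\iota_0^{-1}(\hat D_{ij})$ contains the two balls $B_D(p_i,r/2)$ and $B_D(q_j,r/2)$; for this you need \emph{some} path in $D$ from $o_D$ to $p_i$ (resp.\ $q_j$) whose $\iota_0$-image lies in $\hat D_{ij}$. If $\hat D_{ij}$ is an embedded closed disk in $S_0$, then any arc in $\hat D_{ij}$ from $o_{S_0}$ to $\iota_0(p_i)$ lies in a single homotopy class rel endpoints in $S_0$, namely the class determined by the lift of $\hat D_{ij}$ to $\tilde S_0$. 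But the class of $\iota_0(\gamma_i)$ is forced: since $D$ is simply connected, every path in $D$ from $o_D$ to $p_i$ has $\iota_0$-image in the \emph{same} class. When that class is nontrivial (e.g.\ $S_0$ a flat torus and $D$ a long thin rectangle wrapping once around), the \emph{set} $\iota_0(\gamma_i)$ contains an essential loop and hence is not contained in \emph{any} embedded disk in $S_0$. Your spanning-tree construction does give a closed disk, but the tree path from $o_{S_0}$ to $\iota_0(p_i)$ inside it may represent the wrong homotopy class, so its $\iota_0$-preimage through $o_D$ need not reach $p_i$; equivalently, $\Omega_{ij}$ need not contain $B_D(p_i,r/2)$, and the uniqueness-of-immersions step collapses.

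The paper avoids this obstruction entirely. It does not route through $\sM_{\emb}$; instead, for each pair $(u,v)\in K_1\times K_2$ it invokes Lemma~\ref{lem:1}, whose proof builds an auxiliary planar surface $Q$ via the Fusion Theorem (fusing two rebased copies of the ambient disk at $u$ and at $v$) and shows that ``$\iota(\bar U)\cap\iota(\bar V)\neq\emptyset$'' is equivalent to an immersion-and-basepoint condition on $Q^q$. This expresses the local disjointness set directly as $\sM_{\imm}(D_3)\cap\big(\sM_{\not\imm}(Q^q)\cup\sM_{-}(K^q,W^q)\big)$, i.e.\ in terms of the defining subbasis, with no need to embed anything into the particular surface $S_0$. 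Compactness of $K_1\times K_2$ then finishes the proof. If you want to salvage your route, you would need a genuinely different factorization of $\iota_0$ through a closed disk that \emph{embeds} in $S_0$ while still receiving the relevant piece of $D$; that is essentially what the fusion construction in Lemma~\ref{lem:1} accomplishes by working one level up in $\tM$ rather than inside $S_0$.
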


\subsection{\texorpdfstring{The immersive topology on $\sE$}{The immersive topology on E}}
We will define a topology on $\sE$ which builds off of the topology we defined on $\sM$ above.
The {\em immersive topology} on $\sE$ is the coarsest topology so that 
the projection $\pi:\sE \to \sM$ is continuous and so that the set 
$$\sE_+(D,U)=\{(S,s) \in \sE~:~\text{$\exists \iota:D \imm S$ and $s \in \iota(U)$}\}$$
is open whenever $D$ is a closed disk
and $U$ is an open subset of its interior $D^\circ$.

Later in the paper, we will prove the following:
\begin{proposition}
\name{prop:E-}
If $D$ is a closed disk and $K \subset D$ is closed, then the following set is open:
$$\sE_{-}(D,K)=\{(S,s) \in \sE~:~\text{$\exists \iota:D \imm S$ and $s \not \in \iota(K)$}\}.$$
\end{proposition}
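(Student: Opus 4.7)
My plan is to show $\sE_{-}(D,K)$ is open by exhibiting, for each $(S_0, s_0) \in \sE_{-}(D,K)$, a basic open neighborhood of $(S_0, s_0)$ contained in $\sE_{-}(D,K)$. The immersive topology on $\sE$ is generated by preimages $\pi^{-1}(V)$ with $V$ open in $\sM$ together with the sets $\sE_+(D',U')$, so I build such a neighborhood as an intersection of two sets, one from each family. The governing idea is to enlarge $D$ to a bigger closed disk $D^{\ast}$ that contains a distinguished interior point $p_0$ mapping to $s_0$ under an extended immersion; then $\sE_+(D^{\ast}, U)$ will trap the selected point near $s_0$, while the Disjointness Theorem will separate that neighborhood from the image of $K$.

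To construct $D^{\ast}$, let $\iota:D\imm S_0$ be the unique immersion and fix a path $\gamma$ in $S_0$ from $o_{S_0}$ to $s_0$. A thin closed tubular neighborhood $P_\gamma \subset S_0$ of $\gamma$ forms a closed disk in $S_0$ with both $o_{S_0}$ and $s_0$ in its interior. Using the fuse construction developed in \cite{HooperImmersions1}, I amalgamate $D$ and $P_\gamma$ at their basepoints to obtain an abstract closed disk $D^{\ast} = D \fuse P_\gamma$ together with embeddings $e: D \emb D^{\ast}$ and $f: P_\gamma \emb D^{\ast}$ and an immersion $\iota^{\ast}: D^{\ast} \imm S_0$ that simultaneously extends $\iota$ and the inclusion $P_\gamma \hookrightarrow S_0$. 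Setting $p_0 = f(s_0)$, we have $p_0$ in the interior of $D^{\ast}$ with $\iota^{\ast}(p_0) = s_0$. Since $\iota^{\ast}(e(K)) = \iota(K)$ does not contain $s_0$, injectivity of $e$ gives $p_0 \notin e(K)$. I then choose an open disk $U$ in the interior of $D^{\ast}$ around $p_0$ small enough that $\bar{U} \cap e(K) = \emptyset$ and $\iota^{\ast}(\bar{U})$ is disjoint from the compact set $\iota(K)$; the latter is possible by continuity of $\iota^{\ast}$ together with the positive distance from $s_0$ to $\iota(K)$.

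The candidate neighborhood is
\[
W := \sE_+(D^{\ast}, U)\;\cap\;\pi^{-1}\!\bigl(\sM_{\emptyset}(D^{\ast}; \bar{U}, e(K))\bigr).
\]
This is open: $\sE_+(D^{\ast},U)$ is subbasic by definition, and $\sM_{\emptyset}(D^{\ast}; \bar{U}, e(K))$ is open by the Disjointness Theorem applied to the disjoint closed subsets $\bar{U}$ and $e(K)$ of $D^{\ast}$. The pair $(S_0, s_0)$ lies in $W$ because $\iota^{\ast}$ simultaneously witnesses both conditions, by the choice of $U$. Conversely, for any $(T,t) \in W$, uniqueness of immersions forces a single $\iota': D^{\ast} \imm T$ witnessing both memberships, so $t \in \iota'(\bar{U})$ and $\iota'(\bar{U}) \cap \iota'(e(K)) = \emptyset$, whence $t \notin \iota'(e(K)) = (\iota' \circ e)(K)$. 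Since $\iota' \circ e: D \imm T$ is an immersion, $(T,t) \in \sE_{-}(D,K)$.

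The principal obstacle is justifying the fuse construction of $D^{\ast}$: that two closed disks admitting basepoint-preserving immersions into a common translation surface can be amalgamated into a single closed disk in which both embed, together with a compatible ambient immersion extending the two given immersions. This is precisely the kind of disk-combining result developed in \cite{HooperImmersions1}; once invoked, the rest of the argument is a direct manipulation of the defining subbases of the immersive topologies on $\sM$ and $\sE$.
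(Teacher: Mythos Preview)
Your overall strategy matches the paper's exactly: find a larger closed disk $D^\ast$ that ``sees'' both $K$ (via an immersion $e:D\imm D^\ast$) and a distinguished interior point $p_0$ mapping to $s_0$, then show
\[
(S_0,s_0)\ \in\ \sE_+(D^\ast,U)\ \cap\ \pi^{-1}\bigl(\sM_\emptyset(D^\ast;\bar U,\,e(K))\bigr)\ \subset\ \sE_-(D,K).
\]
The verification that this intersection is open and that it is contained in $\sE_-(D,K)$ is identical to the paper's.

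The gap is in your construction of $D^\ast$. The Fusion Theorem, as stated and used in this paper (and in \cite{HooperImmersions1}), takes as input two \emph{planar surfaces} (open disks) $P$ and $Q$ and produces a planar surface $P\fuse Q$ together with \emph{immersions} $P\imm P\fuse Q$ and $Q\imm P\fuse Q$. It does not take closed disks as input, does not output a closed disk, and does not promise embeddings. So ``$D^\ast=D\fuse P_\gamma$ is a closed disk with embeddings $e:D\emb D^\ast$ and $f:P_\gamma\emb D^\ast$'' is not something you can read off. (As it happens, the embedding claim is inessential to your later argument---only compactness of $e(K)$ and the point $p_0=f(s_0)$ are used---but you do genuinely need $D^\ast$ to be a closed disk, since both $\sE_+(\cdot,\cdot)$ and the Disjointness Theorem require that.) One can repair this by first enlarging $D$ and $P_\gamma$ slightly, fusing the resulting \emph{open} disks, and then choosing a closed disk inside the fusion containing the compact images of $D$ and $P_\gamma$; but this is more work than you have written.

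The paper sidesteps fusion entirely. It passes to the universal cover $\tilde S_0$: lift $\iota$ to $\tilde\iota:D\imm\tilde S_0$, pick any lift $\tilde s$ of $s_0$ and a small closed ball $\tilde B$ about it disjoint from $\iota(K)$ downstairs, and then simply choose a closed disk $D_2\subset\tilde S_0$ large enough to contain both compact sets $\tilde\iota(D)$ and $\tilde B$. The covering map itself furnishes the immersion $D_2\imm S_0$. Your path $\gamma$ and tubular neighborhood $P_\gamma$ are then unnecessary: the universal cover already provides the common ambient disk you are trying to manufacture by fusion.
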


\section{Main Results}
\name{sect:results}
The following result guarantees that the spaces $\sM$ and $\sE$ are fairly reasonable topological spaces. In particular,
limits are unique.

\begin{theorem}
\name{thm:immersive}
The immersive topologies on $\sM$ and $\sE$ are second countable and Hausdorff.
\end{theorem}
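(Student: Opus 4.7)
The plan is to prove second countability by exhibiting a countable subbase built from ``rational'' disks, and to prove Hausdorffness by separating distinct points using the subbasic sets of the immersive topology together with the Embedding and Disjointness Theorems (Theorems \ref{thm:embedding} and \ref{thm:disjointness}).

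For second countability, I would define a countable family $\sD^*$ of isomorphism classes of pointed translation disks assembled from rational data: for instance, compact simply-connected polygonal complexes glued from finitely many Euclidean triangles with vertex coordinates in $\Q^2$ and rationally-prescribed edge identifications, with branching allowed at vertices. Together with polygonal open and closed subsets specified by rational vertices, the associated subbasic sets (including $\sM_\emb(D^*)$ from the Embedding Theorem) form a countable family. The task is to show this family generates the immersive topology: for any $S$ in a subbasic set $V$ of the original topology, one exhibits a rational-data open set $W$ with $S \in W \subset V$. The key geometric input is an approximation lemma: for any closed disk $D$ and compact $C \subset D^\circ$, one finds rational disks $D^-, D^+ \in \sD^*$ with $D^- \subset D \subset D^+$ and $C \subset (D^-)^\circ$. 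For example, when $S \in \sM_\imm(D)$, one locates a rational $D^* \supset D$ with $D^* \emb S$ and takes $W = \sM_\emb(D^*) \subset \sM_\imm(D)$. The same strategy handles $\sE$ using the analogous approximation for $\sE_+(D, U)$.

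For Hausdorffness on $\sM$, given $S_1 \neq S_2$, I would produce a closed disk $D$ that witnesses their non-isomorphism. By exhausting each $S_i$ with embedded closed disks based at $o_{S_i}$ and applying uniqueness of immersions, simultaneous compatibility of all such embeddings into the other surface would force $S_1 \cong S_2$. Hence some $D \in \cdisk(S_1)$ either fails to immerse in $S_2$ or immerses via a map $\iota$ whose behavior differs from the embedding $D \emb S_1$. If $D \not \imm S_2$, I enlarge $D$ slightly in $S_1$ to $D^+$ with $D \subset (D^+)^\circ$; then $U = (D^+)^\circ$ contains the obstruction to immersion, so $S_1 \in \sM_\imm(D^+)$ and $S_2 \in \sM_{\not \imm}(U)$ are disjoint because $U \subset D^+$ forces any immersion of $D^+$ to restrict to one of $U$. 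If instead $D \imm S_2$ via $\iota$ with some $u \in \iota^{-1}(o_{S_2}) \setminus \{o_D\}$, I pick an open $U \ni u$ and closed $K \supset U$ with $o_D \notin K$; then $S_1 \in \sM_-(D, K)$ and $S_2 \in \sM_+(D, U)$ are disjoint, since any $S$ in both would give $o_S \in \iota(U) \subset \iota(K)$, contradicting $o_S \notin \iota(K)$. In the remaining case, $\iota$ has self-intersection $\iota(u_1) = \iota(u_2)$ with $u_1, u_2 \neq o_D$, which reduces to the previous case by enlarging $D$ along the period vector $v$ detected by the self-intersection to produce a larger disk in which $\iota^{-1}(o_{S_2})$ gains an interior preimage away from $o_D$. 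Hausdorffness on $\sE$ follows from these: points with distinct projections are separated via $\pi$, and two distinct points $(S, s_1), (S, s_2)$ over the same $S$ are separated by $V_i = \sE_+(D, U_i) \cap \pi^{-1}(\sM_\emb(D))$, where $D \emb S$ is a closed embedded disk containing $o_S$, $s_1$, $s_2$ in its interior and $U_1, U_2 \subset D^\circ$ are disjoint open neighborhoods of the embedded preimages of $s_1$, $s_2$.

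The main obstacle is the third Hausdorff case: all subbasic sets are anchored at the basepoint through the condition $\iota(o_D) = o_S$, so a self-intersection of $\iota$ away from $o_D$ must be transported to the basepoint by a suitable enlargement of $D$ that also immerses in $S_1$. Verifying that such enlargements are always available, together with establishing the approximation lemma underlying second countability, will draw on the structural analysis of translation structures on disks developed in \cite{HooperImmersions1}.
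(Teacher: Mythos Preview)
Your second-countability sketch matches the paper's approach: both replace arbitrary disks by a countable family built from rational data and verify that the resulting subbasic sets still generate the topology. The paper uses rational rectangular unions from \cite{HooperImmersions1} rather than triangulations, but the mechanism is the same. One small correction: in handling $\sM_\imm(D)$ you should take $W=\sM_\imm(D^*)$ rather than $\sM_\emb(D^*)$, since the rational disk $D^*\supset\iota(D)$ you build inside $\tilde S$ need not project injectively to $S$.

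Your Hausdorff argument, however, has a structural gap. The trichotomy into cases 1--3 rests on the claim that if every closed disk embedded in $S_1$ also embeds compatibly in $S_2$ and vice versa, then $S_1\cong S_2$; you justify this by ``exhausting each $S_i$ with embedded closed disks.'' But a non--simply-connected translation surface cannot be exhausted by embedded closed disks: on a flat torus, for example, embedded disks have bounded diameter. So the exhaustion does not produce a globally defined map $S_1\to S_2$, and the trichotomy is unestablished. The same obstruction recurs in your $\sE$ argument: for distinct $s_1,s_2\in S$ there need not exist any closed disk $D\emb S$ with $o_S,s_1,s_2\in D^\circ$. Your case-3 reduction inherits the problem, since the enlargement of $D$ must stay embedded in $S_1$ while the period it must capture lives in $S_2$.

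The paper's cure is to pass to universal covers. It first proves that $\upsilon:S\mapsto\tilde S$ is continuous. If $\tilde S\neq\tilde T$, Hausdorffness of $\tM$ (established in \cite{HooperImmersions1}) pulls back along $\upsilon$ to separate $S$ and $T$. If $\tilde S=\tilde T$, then the discrete sets $p_S^{-1}(o_S)$ and $p_T^{-1}(o_T)$ differ; picking $\tilde s$ in one but not the other and a closed disk $D\subset\tilde S$ with $\tilde s\in D^\circ$ gives disjoint neighborhoods $\sM_+(D,U)$ and $\sM_-(D,\bar U)$. The point is that in $\tilde S$, which is itself a disk, there is no size obstruction to choosing $D$. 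Your cases 1 and 2 are essentially the downstairs shadows of these two alternatives, and case 3 evaporates once one lifts.
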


We find it useful to observe that immersions of open disks vary continuously in both the 
domain of the immersion and the choice of the target:
\begin{proposition}[Joint continuity of immersions]
\name{prop:continuity immersions}
Let $U$ be an open disk,
and let $\sI(U) \subset \sM$ denote those $S \in \sM$ so that $U \imm S$. For $S \in \sI(U)$,
let $\iota_S:U \imm S$ be the associated immersion. Then, the following map is continuous:
$$I_U:\sI(U) \times U \to \sE; \quad (S,u) \mapsto \iota_S(u)$$ 
\end{proposition}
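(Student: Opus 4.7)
The plan is to check continuity of $I_U$ by verifying that the preimage under $I_U$ of each element of a subbasis for the topology on $\sE$ is open in $\sI(U)\times U$ (equipped with the product topology, with $\sI(U)$ carrying the subspace topology from $\sM$). The subbasis consists of sets $\pi^{-1}(N)$ for $N\subset\sM$ open and sets $\sE_+(D,V)$ with $D$ a closed disk and $V\subset D^\circ$ open. For the first family, $I_U^{-1}(\pi^{-1}(N))=(\sI(U)\cap N)\times U$ is open immediately.

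The substantive case is $W=\sE_+(D,V)$. Fix $(S_0,u_0)\in I_U^{-1}(W)$; then there is a unique immersion $\iota':D\imm S_0$ with $p_0:=\iota_{S_0}(u_0)\in\iota'(V)$. Choose $v_0\in V$ with $\iota'(v_0)=p_0$. Since $\iota'|_{D^\circ}$ and $\iota_{S_0}$ are local homeomorphisms at $v_0$ and $u_0$ respectively (each is locally a translation in translation charts), one shrinks to an open $V_0\ni v_0$ with $\bar V_0\subset V$ on which $\iota'$ embeds, and an open $U_0\ni u_0$ in $U$ on which $\iota_{S_0}$ embeds, arranging $\iota_{S_0}(U_0)\subset\iota'(V_0)$. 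This yields a translation-preserving homeomorphism $\phi:U_0\to V_0$ with $\iota_{S_0}|_{U_0}=\iota'\circ\phi$.

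The goal then becomes: find an open neighborhood $N$ of $S_0$ in $\sI(U)\cap\sM_\imm(D)$ such that the identity $\iota_S|_{U_0}=\iota''_S\circ\phi$ continues to hold for every $S\in N$, where $\iota''_S:D\imm S$ is the unique immersion. Granted this, any $(S,u)\in N\times U_0$ satisfies $\iota_S(u)=\iota''_S(\phi(u))\in\iota''_S(V)$, so $(S,u)\in I_U^{-1}(W)$. To produce $N$, I would construct an auxiliary closed disk $\hat D$ in the paper's sense by fusing $D$ with a closed disk $E\subset U$ (chosen to contain $o_U$ in its interior and $U_0\subset E^\circ$) along the identification $\phi:U_0\leftrightarrow V_0$, so that $\hat D$ admits a single immersion $\hat D\imm S_0$ restricting to $\iota'$ on the $D$-part and to $\iota_{S_0}|_E$ on the $E$-part. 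Taking $N:=\sM_\imm(\hat D)\cap\sI(U)$ and applying uniqueness of immersions to $\hat D\imm S$ would then force $\iota''_S$ and $\iota_S|_E$ to agree through $\phi$, yielding the desired identity on $U_0$.

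The principal obstacle is this fusion step. In $S_0$, the equality $\iota_{S_0}|_{U_0}=\iota'\circ\phi$ reflects only the coincidence $\iota_{S_0}(u_0)=\iota'(v_0)$; in a different target $S$ the immersions $\iota''_S$ and $\iota_S$ are independently determined by $o_S$, and agreement at $u_0\leftrightarrow v_0$ can fail because closed loops in $S$ permit the ``developed paths'' from the basepoint to $u_0$ (through $U$) and to $v_0$ (through $D$) to land at distinct points. The fused disk $\hat D$ must encode the identification $\phi$ as intrinsic data---a single simply-connected pointed translation structure with $o_U=o_D$ in its interior---so that any immersion $\hat D\imm S$ forces the match by uniqueness rather than as an external coincidence. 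Verifying that the glued object $\hat D$ is genuinely a closed disk in the paper's sense and that $\sM_\imm(\hat D)$ is an honest open neighborhood of $S_0$ inside $\sI(U)\cap\sM_\imm(D)$ constitutes the technical heart of the argument, and will likely draw on the disk-immersion machinery of \cite{HooperImmersions1}.
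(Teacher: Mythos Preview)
Your proposal leaves precisely the step you flag as the ``principal obstacle'' unresolved, and in fact the naive gluing you sketch cannot work in general. The coincidence $\iota_{S_0}(u_0)=\iota'(v_0)$ takes place in $S_0$, not in $\tilde S_0$: lifting both immersions to the universal cover sends $u_0$ and $v_0$ to points with developed positions $\dev(u_0)$ and $\dev(v_0)$, and these need not agree (they differ by the holonomy of a loop in $S_0$). When they disagree, your glued object $\hat D=E\cup_\phi D$ acquires a nontrivial loop---the concatenation of a path in $E$ from the basepoint to $u_0$ with the reverse of a path in $D$ from the basepoint to $v_0$---with nonzero translation holonomy. So $\hat D$ is not simply connected, hence not a closed disk, and $\sM_{\imm}(\hat D)$ is not one of the subbasic open sets. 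Repairing this requires tracking the deck transformation $\gamma$ of $\tilde S_0$ carrying the lift $\tilde\iota'(v_0)$ to $\tilde\iota_{S_0}(u_0)$ and building a disk in $\tilde S_0$ that contains $\gamma\big(\tilde\iota'(D)\big)$ together with the $\gamma$-image of the basepoint; showing the resulting neighborhood works is essentially the content of the Projection Theorem (Theorem~\ref{thm:covering projection}).

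The paper bypasses all of this. It simply writes
\[
I_U(S,u)=p_S\big(\tilde I_U(\tilde S,u)\big),
\]
where $\tilde I_U$ is the analogous map with targets restricted to planar surfaces. Continuity of $\tilde I_U$ is quoted from the companion paper \cite{HooperImmersions1}, continuity of $S\mapsto\tilde S$ is Theorem~\ref{thm:universal cover}, and continuity of the covering projection $(S,\tilde s)\mapsto p_S(\tilde s)$ is Theorem~\ref{thm:covering projection}. The whole proof is three lines. Your direct approach is not wrong in spirit, but completing it forces you to redo the hard work already packaged in the Projection Theorem; the factorization through $\tilde S$ is both shorter and avoids the simple-connectivity obstruction entirely.
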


Since our surfaces are pointed, it is reasonable to ask what happens when the basepoint is moved. There are two important maps related to this idea. 
First, if $S \in \sM$ and $s \in S$, then we define $\BC(S,s)=S^s \in \sM$ to be the 
translation surface which is isomorphic to $S$ with the basepoint relocated to $s$. This defines
the {\em basepoint changing map} $\BC:\sE \to \sM$. 
\begin{theorem}
\name{thm:basepoint changing map}
The basepoint changing map $\BC:(S,s) \mapsto S^s$ is continuous.
\end{theorem}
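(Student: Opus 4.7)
The plan is to verify continuity of $\BC$ at each $(S, s) \in \sE$ by showing that, for every subbasic open neighborhood $V$ of $\BC(S,s) = S^s$ in $\sM$, the preimage $\BC^{-1}(V)$ contains an open neighborhood of $(S,s)$ in $\sE$. Since the subbasis for the immersive topology on $\sM$ consists of the four families $\sM_\imm(D)$, $\sM_{\not\imm}(U)$, $\sM_+(D, U)$, and $\sM_-(D, K)$, I would treat each family separately.

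The geometric heart of the argument is a \emph{bridging disk} construction. Because $S$ is a connected 2-manifold, one can choose an embedded closed disk $E \subset S$ whose interior contains both $o_S$ and $s$; concretely, fatten an embedded path from $o_S$ to $s$ within the translation charts. Let $e: E \hookrightarrow S$ denote this embedding, with $e(o_E) = o_S$ and $e(q) = s$ for a unique $q \in E^\circ$. By the Embedding Theorem (Theorem~\ref{thm:embedding}) applied to $E$, the intersection $\pi^{-1}(\sM_{\emb}(E)) \cap \sE_+(E, U)$ is an open neighborhood of $(S,s)$ in $\sE$ for any open $U \ni q$ with $\bar{U} \subset E^\circ$. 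For each $(S', s')$ in this neighborhood, there is a unique embedding $e': E \hookrightarrow S'$ with $e'(o_E) = o_{S'}$, and a unique $q' \in U$ with $e'(q') = s'$; as $U$ shrinks, $q'$ is forced close to $q$.

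For the case $V = \sM_\imm(D_0)$ with given immersion $\iota_0: D_0 \to S^s$, I would construct a \emph{fused} closed disk $F$: a closed disk in a new translation surface obtained by attaching $D_0$ to $E$ along the identification $q \sim o_{D_0}$, using the matching translation charts guaranteed by $\iota_0$ near $o_{D_0}$ and $e$ near $q$. Choosing $F$ with enough geometric slack around the fusion point, the set $\sE_+(F, U)$ (for a small $U \ni q$) becomes an open neighborhood of $(S,s)$ on which, for each $(S', s')$, the immersion $F \imm S'$ sends some $u' \in U$ close to $q$ to $s'$. Re-reading $F$ with basepoint $u'$ instead of $o_F$, the same immersion becomes $F^{u'} \imm (S')^{s'}$; precomposing with the embedding $D_0 \emb F^{u'}$ (available because of the slack built into $F$) yields $D_0 \imm (S')^{s'}$, i.e., $(S')^{s'} \in V$. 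The case $V = \sM_+(D_0, U_0)$ is handled similarly, with additional control tracking the position of $o_{D_0}$ in the image. The cases $V = \sM_{\not\imm}(U_0)$ and $V = \sM_-(D_0, K_0)$ are dual and rely on the Disjointness Theorem (Theorem~\ref{thm:disjointness}) and Proposition~\ref{prop:E-} respectively, used to propagate non-immersibility and disjointness of images from $(S,s)$ to nearby $(S',s')$.

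The main obstacle is the fusion construction: producing $F$ as a genuine closed disk in some translation surface, with enough interior slack that $D_0 \emb F^u$ for all $u$ in a neighborhood of $q$. This requires (i) that the gluing of $E$ and $D_0$ along matched chart neighborhoods yields a disk rather than a more general 2-complex --- arranged by constructing $F$ as a controlled enlargement of $E$ in the direction of the $\iota_0$-image rather than a formal gluing; and (ii) that translations of $D_0$'s image within $F$ by small vectors remain inside $F$, which provides the family of embeddings $D_0 \emb F^u$ for $u \in U$. Both points rely on the structure of translation disks developed in \cite{HooperImmersions1}.
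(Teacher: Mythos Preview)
Your approach is quite different from the paper's. The paper argues sequentially: given $(S_n,s_n)\to(S,s)$ it lifts to universal covers via the square in equation~\ref{eq:bc}, applies the already-established continuity of $\BC|_{\tE}$ from the companion paper (Theorem~\ref{thm:basepoint disk}) to get $\tilde S_n^{\tilde s_n}\to\tilde S^{\tilde s}$, and then verifies the two conditions of the convergence criterion Theorem~\ref{thm:convergence in M} for $\langle\BC(S_n,s_n)\rangle$ using Theorem~\ref{thm:convergence in E} and the Projection Theorem to pass between base and cover. No bridging or fusion construction appears; the universal cover itself plays the role of the ambient planar surface in which everything sits.

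Your subbasis-by-subbasis plan is plausible, but the step you yourself flag as ``the main obstacle'' is a genuine gap as written. The Fusion Theorem fuses \emph{open} planar surfaces at a \emph{common} basepoint and returns a planar surface, not a closed disk; to obtain your $F$ you would have to rebase $E^\circ$ at $q$, fuse with $D_0^\circ$, rebase back, and then extract a closed disk carrying an $\epsilon$-collar around the $D_0$-image. The natural way to do this is to note that both $E$ and a rebased copy of $D_0$ immerse into $\tilde S$ and take $F\in\cdisk(\tilde S)$ large enough --- at which point you have essentially rediscovered the paper's universal-cover route, organized around subbasic sets instead of sequences. Separately, the $\sM_{\not\imm}(U_0)$ case is not really ``dual'' in the way you suggest: here there is no immersion of $D_0$ to bridge to, and it is unclear how the Disjointness Theorem enters; one needs instead that non-immersibility of a compact sub-disk of $U_0$ into $S^s$ is an open condition on $(S,s)$, which again is most cleanly seen by lifting to $\tilde S^{\tilde s}$ and invoking openness of $\tM_{\not\imm}$ together with continuity of $\BC|_{\tE}$.
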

In addition to the basepoint changing map, there is a {\em basepoint changing isomorphism}
$\beta_s: S \to S^s$. This is the translation isomorphism which sends $s \in S$ to the basepoint of $S^s$.
\begin{theorem}
\name{thm:basepoint changing isomorphism}
Consider the basepoint changing isomorphism $s' \mapsto \beta_s(s')$ and
the inverse basepoint changing isomorphism, which sends $t \in S^s$ to $\beta_s^{-1}(t)$.
Both maps are jointly continuous in both $s$ and the given domain ($S$ and $S^s$, respectively). 
\end{theorem}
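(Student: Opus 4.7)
The plan is to address the two maps separately. The first, $\Phi(S, s, s') = (S^s, \beta_s(s'))$, is defined on the fiber product $\sE \times_\pi \sE$; the inverse, $\Psi(S, s, t) = (S, \beta_s^{-1}(t))$, will be reduced to $\Phi$. For $\Phi$, continuity into the $\pi^{-1}$-part of the subbase of $\sE$ is automatic, since $\pi \circ \Phi$ is the composition of projection to $(S, s)$ with $\BC$, both continuous (Theorem \ref{thm:basepoint changing map}). The real work is to verify continuity into an arbitrary subbasic set $\sE_+(D, V)$ containing $\Phi(S, s, s') = (S^s, \beta_s(s'))$, which by definition provides an immersion $\iota: D \imm S^s$ and a point $p \in V \subset D^\circ$ with $\iota(p) = s'$.

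The key construction is an auxiliary closed translation disk $\hat D$ encoding both the internal geometry from $o_S$ to $s$ in $S$ and the immersed disk $D$. Concretely, pick a path $\gamma$ in $S$ from $o_S$ to $s$, take a thin abstract strip-disk $T$ whose translation structure comes from a tubular neighborhood of $\gamma$, and glue $T$ to $D$ along a small chart-neighborhood of $o_D$ identified with the matching chart-neighborhood near the $s$-endpoint of $\gamma$. The pan-shape $\hat D$ is a closed translation disk admitting a canonical immersion $\hat\iota: \hat D \imm S$; in $\hat D$, the glued position of $o_D$ maps to $s$ and the image of $p$ maps to $s'$. Now pick small open neighborhoods $U_s \ni o_D$ and $U_{s'} \ni p$ in $\hat D^\circ$, with $\overline{U_{s'}} \subset V$ and with the chart-diameter of $U_s$ less than the distance from $\overline{U_{s'}}$ to the complement of $V$. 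Then
\[
N = \{(\tilde S, \tilde s, \tilde s') : (\tilde S, \tilde s) \in \sE_+(\hat D, U_s) \text{ and } (\tilde S, \tilde s') \in \sE_+(\hat D, U_{s'})\}
\]
is an open neighborhood of $(S, s, s')$ in $\sE \times_\pi \sE$. For any $(\tilde S, \tilde s, \tilde s') \in N$, the shared unique immersion $\hat\iota_{\tilde S}: \hat D \imm \tilde S$ sends some $\hat s \in U_s$ to $\tilde s$ and some $\hat s' \in U_{s'}$ to $\tilde s'$; writing $\vec\delta$ for the chart-displacement from $o_D$ to $\hat s$, the translate $D + \vec\delta$ is a sub-disk of $\hat D$ based at $\hat s$ and isomorphic to $D$, so restricting $\hat\iota_{\tilde S}$ to this sub-disk yields an immersion $D + \vec\delta \imm \tilde S^{\tilde s}$, and hence $D \imm \tilde S^{\tilde s}$ by Corollary \ref{prop:indistinguishable}. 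A direct chart computation using the smallness of $|\vec\delta|$ and $\overline{U_{s'}} \subset V$ then forces $\tilde s' \in \iota_{\tilde S^{\tilde s}}(V)$, so $\Phi(N) \subset \sE_+(D, V)$.

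For the inverse map $\Psi$, the plan is to factor it through $\Phi$. Since $\beta_s^{-1}: S^s \to S$ and $\beta_{o_S}^{S^s}: S^s \to (S^s)^{o_S} = S$ are both the identity on the underlying surface, one checks $\Psi(S, s, t) = \Phi(S^s, o_S, t)$. Define $G(S, s, t) = (S^s, o_S, t)$, viewed in $\sE \times_\pi \sE$; its third coordinate is just the second factor of the input, and its first two coordinates form $(S^s, \beta_s(o_S)) = \Phi(S, s, o_S)$, which is continuous because the zero section $\sigma_0: \sM \to \sE$, $S \mapsto (S, o_S)$ is continuous (its preimage of $\sE_+(D, V)$ is the subbase element $\sM_+(D, V)$) and $\Phi$ is continuous by the first part of the proof. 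So $G$ is continuous and $\Psi = \Phi \circ G$ is continuous. The most delicate step in the plan is ensuring the translate $D + \vec\delta$ actually lies in $\hat D$: the glued $D$-part of $\hat D$ needs a collar around $\partial D$. The approach is to first enlarge $D$ to a closed disk $D^+ \imm S^s$ with $D \subset D^{+\circ}$, obtained by analytic continuation of the translation charts across $\partial D$ (possible because $S^s$ is a $2$-manifold without boundary, and closely related to results in \cite{HooperImmersions1}), and then attach the strip $T$ to $D^+$ in place of $D$.
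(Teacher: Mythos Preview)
Your strategy is genuinely different from the paper's. The paper argues sequentially: given $(S_n,s_n)\to(S,s)$ and $t_n\to t$, it lifts everything to universal covers via Theorem~\ref{thm:convergence in E}, invokes the already-established disk case (Theorem~\ref{thm:basepoint disk}, imported from \cite{HooperImmersions1}) to get $\tilde\beta_n(\tilde t_n)\to\tilde\beta(\tilde t)$ in $\tE$, and then pushes back down using Theorem~\ref{thm:basepoint changing map} and the Projection Theorem together with the commutative square~(\ref{eq:bc}). It is a few lines of bookkeeping once that machinery is assembled. Your direct subbase computation, by contrast, never appeals to the disk result, and your reduction of $\Psi$ to $\Phi$ through the canonical section $\sigma_0$ is correct and tidy.

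There is, however, a real gap in the construction of $\hat D$. Gluing two closed disks $T$ and $D^+$ along an open chart-ball contained in both interiors does not yield a manifold: the pushout $T\sqcup_U D^+$ is non-Hausdorff, because every point on the frontier of $U$ acquires two copies (one from $T$, one from $D^+$) that cannot be separated. Your collar enlargement $D\subset(D^+)^\circ$ controls where $D+\vec\delta$ lands but does not repair this. Two clean fixes are available. First, you can reduce at the outset to the case where $D$ is already a closed disk sitting inside $\tilde S^{\tilde s}$: replace $(D,V)$ by $\big(D_2,\tilde\iota(V)\big)$ where $D_2\in\cdisk(\tilde S^{\tilde s})$ contains $\tilde\iota(D^+)$ in its interior (this shrinks $\sE_+$ while still containing your target point), and then take $\hat D\in\cdisk(\tilde S)$ to contain both $\tilde\beta_{\tilde s}^{-1}(D_2)$ and the lifted path $\tilde\gamma$, which exists by the nested-disk theorem of \cite{HooperImmersions1}. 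With that choice $D$ genuinely embeds in $\hat D$ and your translate argument runs as written. Second, you could build $\hat D^\circ$ as a basepoint-change of the fusion $(D^+)^\circ\fuse T^\circ$ (both taken with basepoint at the $s$-position), using the Fusion Theorem quoted in \S\ref{sect:embeddings}; then $D$ immerses rather than embeds into $\hat D$, but your displacement calculation survives with only notational changes.
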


There is a standard action of elements $A \in \GL(2,\R)$ on a translation structures on a surface $X$. If
$\sA=\{(U_j, \phi_j): j \in \sJ\}$ is an atlas determining a translation structure on $X$, then the image of this structure under $A$ is given by
$$A(\sA)=\{(U_j, A \circ \phi_j) : j \in \sJ\}.$$ The fact that $A(\sA)$ is also
a translation structure on $X$ follows from the observation that 
the group of translations of the plane is a normal subgroup of the affine group of the plane,
which also contains $\GL(2,\R)$. The $\GL(2,\R)$ action sends isomorphic translation structures to isomorphic translation
structures, and thus induces an action on $\sM$. Furthermore, there is a natural action of each $A\in \GL(2,\R)$ on $\sE$ given by 
$$A: \sE \to \sE; \quad [X,x_0,\sA;x] \mapsto [X,x_0,A(\sA);x].$$
Here, we are choosing a representative 
$(X,x_0,\sA;x)$ of $[X,x_0,\sA;x] \in \sE$, but the image is independent of this choice.
Observe that this action restricted to a fiber of $\pi:\sE \to \sM$ restricts
to a homeomorphism between each translation surface and the image surface.
That is, if $S \in \sM$ is a translation surface, then 
$A|_S:S \to A(S)$ is a homeomorphism. 
The action is natural in the sense that for any chart $(U,\phi)$ for the translation structure
on $S$, there is a chart $(V,\psi)$ for the translation structure on $A(S)$ so that
$A(U)=V$ and the following diagram commutes:
\begin{equation}
\name{eq:A}
\begin{tikzcd}[column sep=scriptsize, row sep=scriptsize]
U \arrow{r}{A|_{S}} \arrow{d}{\phi} & V \arrow{d}{\psi} \\
\R^2 \arrow{r}{A} & \R^2
\end{tikzcd}
\end{equation}

\begin{theorem}[Continuity of affine actions]
\name{thm:affine}
The actions of $\GL(2,\R)$ on $\sM$ and $\sE$ are continuous.
\end{theorem}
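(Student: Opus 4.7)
The plan is to verify continuity of each $A \in \GL(2,\R)$ acting on $\sM$ and $\sE$ by checking that the $A$-preimage of every subbasic open set is itself subbasic open. Recall that the subbasis for the immersive topology on $\sM$ consists of the sets $\sM_{\imm}(D)$, $\sM_{\not\imm}(U)$, $\sM_+(D,U)$, and $\sM_-(D,K)$ from Section~\ref{sect:top M}, and the topology on $\sE$ is generated by the pullbacks via $\pi$ of $\sM$-subbasic sets together with the sets $\sE_+(D,U)$.

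The key step is to establish an equivariance property for immersions under the $\GL(2,\R)$-action. Since $A$ restricts to a basepoint-preserving homeomorphism $A|_R : R \to A(R)$ on each translation surface $R$, it sends closed disks, open disks, and closed subsets in $R$ to closed disks, open disks, and closed subsets in $A(R)$, respectively. The commutative diagram~(\ref{eq:A}) then shows that a basepoint-preserving map $\iota: D \to S$ is an immersion if and only if the conjugate $A|_S \circ \iota \circ (A|_R)^{-1}: A(D) \to A(S)$ is an immersion; in particular $D \imm S$ iff $A(D) \imm A(S)$, and the two corresponding immersions agree on basepoint and image data. From this I would deduce the preimage identities
\[
A^{-1}\bigl(\sM_{\imm}(D)\bigr)=\sM_{\imm}\bigl(A^{-1}(D)\bigr),
\qquad
A^{-1}\bigl(\sM_{\not\imm}(U)\bigr)=\sM_{\not\imm}\bigl(A^{-1}(U)\bigr),
\]
\[
A^{-1}\bigl(\sM_+(D,U)\bigr)=\sM_+\bigl(A^{-1}(D),A^{-1}(U)\bigr),
\qquad
A^{-1}\bigl(\sM_-(D,K)\bigr)=\sM_-\bigl(A^{-1}(D),A^{-1}(K)\bigr).
\]
Each right-hand side is itself a subbasic open set in $\sM$, so $A:\sM\to\sM$ is continuous; applying the same reasoning to $A^{-1}$ in place of $A$ makes it a homeomorphism.

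For $\sE$, the identity $\pi \circ A = A \circ \pi$ combined with continuity of the action on $\sM$ handles the subbasic open sets pulled back from $\sM$ via $\pi$. The only remaining case is the analogous computation one level up,
\[
A^{-1}\bigl(\sE_+(D,U)\bigr)=\sE_+\bigl(A^{-1}(D),A^{-1}(U)\bigr),
\]
which uses the equivalence $A(s)\in\iota(U)$ iff $s\in (A^{-1}\circ\iota\circ A)(A^{-1}(U))$. Hence $A:\sE\to\sE$ is continuous as well. The only real work is verifying the equivariance of immersions, which is an immediate consequence of the local structure of the action captured by~(\ref{eq:A}); I do not anticipate any serious obstacle.
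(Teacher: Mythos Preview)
Your argument correctly shows that each fixed $A \in \GL(2,\R)$ acts as a homeomorphism of $\sM$ and of $\sE$. However, the theorem asserts continuity of the \emph{action}, i.e., joint continuity of the map $\GL(2,\R) \times \sM \to \sM$ (and similarly for $\sE$). This is what the paper proves, and it is what the subsequent Corollary on convergence of affine automorphisms requires: there one has a sequence $A_n \to A$ together with $S_n \to S$, and one needs $A_n(S_n) \to A(S)$. Your preimage identities such as $A^{-1}\bigl(\sM_{\imm}(D)\bigr)=\sM_{\imm}\bigl(A^{-1}(D)\bigr)$ only address variation in $S$ with $A$ held fixed; they say nothing about what happens when $A$ moves. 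To get joint continuity by your route you would need, for instance, to show that if $A^{-1}(D) \imm S$ then $B^{-1}(D) \imm T$ for all $(B,T)$ in a neighborhood of $(A,S)$; but $B^{-1}(D)$ is a genuinely varying closed disk, and the subbasis does not directly give you control over such families.

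The paper takes a different route that handles joint continuity from the start. It uses the sequential convergence criteria of Theorems~\ref{thm:convergence in M} and~\ref{thm:convergence in E}, together with the commutative diagrams~(\ref{eq:A on M}) and~(\ref{eq:A on P}) relating the $\GL(2,\R)$-action to the universal cover map $\upsilon$ and the covering projection $p$. The joint continuity of the $\GL(2,\R)$-action on $\tM$ and $\tE$ is imported from \cite{HooperImmersions1}, and then continuity of $\upsilon$ and $p$ (Theorems~\ref{thm:universal cover} and~\ref{thm:covering projection}) transports this to $\sM$ and $\sE$.
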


An {\em affine automorphism} of a translation surface $S$ is a homeomorphism $S \to S$ which respects the affine
structure underlying the translation structure on $S$. More concretely, a homeomorphism 
$h:S \to S$ is an affine automorphism if there is an $A \in \GL(2,\R)$ so that 
for each chart $(U,\phi)$ in the maximal translation atlas 
on $S$, the pair $\big(h(U),A \circ \phi\big)$ is also a chart in this atlas. 
We call $A \in \GL(2,\R)$ the {\em derivative} of the affine automorphism $h$. 

Note that we do not require an affine automorphism $h:S \to S$ to respect the basepoint.
If $s=h^{-1}(o_s)$ is the preimage of the basepoint, then $A(S^s)=S$ and $h$ is given by the composition
$$S \xrightarrow{\beta_s} S^s \xrightarrow{A} S,$$
where $\beta_s$ is the basepoint changing isomorphism. By continuity of the $\GL(2,\R)$ actions and joint continuity of $\beta_s$ and $\beta_s^{-1}$, we have obtained the following:

\begin{corollary}[Convergence of affine automorphisms]
Let $\langle S_n \in \sM \rangle$ be a sequence of translation surfaces converging to a surface $S \in \sM$.
Suppose that each $S_n$ admits an affine homeomorphism $h_n$ with derivative $A_n$. Further suppose that
$\langle A_n \rangle$ converges to some $A \in \GL(2,\R)$, and that $s_n=h_n^{-1}(o_{S_n})$ converges to some limit point $s \in S$. 
Then, there is an affine homeomorphism $h:S \to S$ with derivative $A$ so that $h(s)=o_S$. Moreover, for any sequence $\langle t_n \in S_n \rangle$
tending to $t \in S$, the sequence $\langle h_n^k(t_n) \rangle$ converges to $h^k(t)$ for all $k \in \Z$.
\end{corollary}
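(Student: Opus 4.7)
The plan is to exploit the factorization $h_n = A_n \circ \beta_{s_n}$ recorded in the paragraph preceding the corollary, and to construct the limit automorphism $h$ by chaining the three named continuity results (Theorems \ref{thm:basepoint changing map}, \ref{thm:basepoint changing isomorphism}, \ref{thm:affine}).

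First, I would construct $h$. By Theorem \ref{thm:basepoint changing map}, $S_n^{s_n} = \BC(S_n, s_n)$ converges in $\sM$ to $S^s = \BC(S,s)$. Combining this with $A_n \to A$ in $\GL(2,\R)$ and the continuity of the $\GL(2,\R)$-action on $\sM$ (Theorem \ref{thm:affine}), we obtain $A_n(S_n^{s_n}) \to A(S^s)$. But $h_n$ is an automorphism of $S_n$, so $A_n(S_n^{s_n}) = S_n$, which tends to $S$. The Hausdorff property of $\sM$ (Theorem \ref{thm:immersive}) then forces $A(S^s) = S$. We may therefore define $h := A \circ \beta_s : S \to A(S^s) = S$; this is an affine homeomorphism of $S$ with derivative $A$, and since $\beta_s(s) = o_{S^s}$ and the action of $A \in \GL(2,\R)$ on $\sE$ preserves basepoints, $h(s) = o_S$.

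Next, I would prove the convergence $h_n^k(t_n) \to h^k(t)$ by induction on $|k|$. For $k = 1$: feeding the hypotheses $(S_n, s_n) \to (S, s)$ and $(S_n, t_n) \to (S, t)$ in $\sE$ into the joint continuity of $\beta$ given by Theorem \ref{thm:basepoint changing isomorphism} yields $\beta_{s_n}(t_n) \to \beta_s(t)$ in $\sE$. A further application of Theorem \ref{thm:affine}, together with $A_n \to A$, gives $h_n(t_n) = A_n(\beta_{s_n}(t_n)) \to A(\beta_s(t)) = h(t)$. For $k = -1$: write $h_n^{-1} = \beta_{s_n}^{-1} \circ A_n^{-1}$, use continuity of matrix inversion to get $A_n^{-1} \to A^{-1}$, apply Theorem \ref{thm:affine} to obtain $A_n^{-1}(t_n) \to A^{-1}(t)$ in $\sE$, and close with joint continuity of $\beta_s^{-1}$ from Theorem \ref{thm:basepoint changing isomorphism}. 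The inductive step is automatic: the $k=1$ argument converts any convergent pair $(S_n, u_n) \to (S, u)$ in $\sE$ into the convergent pair $(S_n, h_n(u_n)) \to (S, h(u))$, while the auxiliary hypothesis $(S_n, s_n) \to (S, s)$ is unchanged and may be reused at each stage; setting $u_n := h_n^k(t_n)$ propagates the conclusion through all $k \geq 0$, and the analogous reuse of the $k=-1$ step handles $k \leq 0$.

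The only real obstacle is bookkeeping of ambient fibers: at each stage one must track whether the current point lies in $S_n$ or in $S_n^{s_n}$, so that the correct continuity theorem is invoked on inputs converging in $\sE$ to points in the correct fiber of $\pi:\sE \to \sM$. Once the factorization $h_n = A_n \circ \beta_{s_n}$ is fixed and the domains are consistently labeled, no new ideas beyond the three cited continuity theorems and the Hausdorff property are needed.
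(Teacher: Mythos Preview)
Your proposal is correct and follows exactly the approach the paper indicates: the paper states the corollary as an immediate consequence of the factorization $h_n=A_n\circ\beta_{s_n}$ together with Theorems \ref{thm:basepoint changing map}, \ref{thm:basepoint changing isomorphism}, and \ref{thm:affine}, and you have simply written out those details carefully. Your explicit invocation of the Hausdorff property to pin down $A(S^s)=S$, and your induction on $|k|$ for the iterated convergence, are precisely the bookkeeping the paper leaves implicit.
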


Finally, we will prove that the collection of surfaces containing an open disk
is compact: 

\begin{theorem}[Compactness]
\name{thm:compactness}
Let $U$ be an open disk in a translation surface containing the basepoint. The following set is compact:
$$\sM \smallsetminus \sM_{\not \emb}(U)=\{S \in \sM~:~U \emb S\}.$$
\end{theorem}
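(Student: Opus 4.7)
My plan is to establish sequential compactness, which suffices since $\sM$ is Hausdorff and second countable by Theorem \ref{thm:immersive}. So I would start with an arbitrary sequence $\langle S_n \rangle$ satisfying $U \emb S_n$, let $e_n : U \emb S_n$ be the unique embeddings, and aim to extract a subsequence converging to some $S_\infty$ with $U \emb S_\infty$.

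First, I fix a nested exhaustion of $U$ by closed disks $D_1 \subset D_2 \subset \cdots \subset U$ with each $D_k \subset D_{k+1}^\circ$ and $\bigcup_k D_k = U$; this is possible because $U$ is an open disk containing the basepoint. Restricting $e_n$ yields embeddings $D_k \emb S_n$ for every $k$ and $n$, and in particular each image $e_n(D_{k+1})$ is a closed disk in $S_n$ whose interior contains $e_n(D_k)$.

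The central step is a diagonal extraction driven by disk-level compactness. For each fixed $k$, I would appeal to the companion paper \cite{HooperImmersions1} on translation structures on disks: its compactness results for the space of closed-disk translation structures that contain $D_k$ in their interior produce, from the sequence $\langle e_n(D_{k+1}) \rangle$ of closed-disk structures sitting inside the various $S_n$, a subsequence along which these disk structures converge to a limiting closed-disk structure extending $D_{k+1}$. Diagonalizing across all $k$ yields a single subsequence $\langle S_{n_j} \rangle$ along which such convergence holds at every scale; uniqueness of immersions forces the limiting disk structures to be mutually consistent on overlaps, so they assemble into a pointed translation surface $S_\infty \in \sM$ into which $U$ embeds.

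Finally I would verify $S_{n_j} \to S_\infty$ in the immersive topology by checking that each of the four types of sub-basic neighborhoods --- $\sM_{\imm}(D)$, $\sM_{\not\imm}(V)$, $\sM_+(D,V)$, and $\sM_-(D,K)$ --- containing $S_\infty$ eventually contains $S_{n_j}$. The Embedding Theorem (Theorem \ref{thm:embedding}) and Disjointness Theorem (Theorem \ref{thm:disjointness}) let me replace immersion statements by embedding statements into specific closed disks sitting inside some $D_k$, at which point the diagonal convergence finishes the job; the $\sM_{\not\imm}$ case is handled by a short contradiction argument using the positive cases. The main obstacle is the bridge between \cite{HooperImmersions1} and the global moduli space: assembling the compatible family of limiting disk structures into a genuine pointed translation surface $S_\infty \in \sM$, and verifying that the combinatorial immersion data preserved by the subsequence is faithfully encoded by this limit. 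Once this bridge is in place, the remaining extractions and verifications are largely routine.
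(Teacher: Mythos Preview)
Your plan has a genuine gap at its central step, and it is not merely technical. The embedded disks $e_n(D_{k+1}) \subset S_n$ are, by definition of embedding, translation-isomorphic to $D_{k+1}$ itself via $e_n^{-1}$; so your ``sequence of closed-disk structures'' is constant, and its limit recovers nothing but $D_{k+1}$. Assembling these over all $k$ yields only $U$. But the limiting surface $S_\infty$ need not be $U$: it can be any translation surface into which $U$ embeds, with arbitrary genus and arbitrary geometry outside the embedded copy of $U$. Your extraction throws away all information about $S_n \smallsetminus e_n(U)$, so there is no way to reconstruct $S_\infty$ from it, and no way to verify sub-basic conditions such as $S_\infty \in \sM_{\not\imm}(V)$ for open disks $V$ that are large compared to $U$.

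The paper avoids this by working not with the embedded disk $U$ but with the \emph{universal covers} $\tilde S_n$, which are planar surfaces to which the disk-level compactness of \cite{HooperImmersions1} genuinely applies. After passing to a subsequence so that $\tilde S_n \to \tilde S$ in $\tM$, the remaining issue is to recover the quotient structure: one must extract a further subsequence along which the discrete sets $p_n^{-1}(o_n) \subset \tilde S_n$ converge (in the sense of Theorem~\ref{thm:convergence in M}) to a discrete set $\tilde O \subset \tilde S$. Here is where the hypothesis $U \emb S_n$ is actually used: it forces any two lifts of $o_n$ to be at least a fixed distance apart, which prevents accumulation of $\tilde O$ and guarantees uniqueness of limits of basepoint lifts within small balls. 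The limit surface is then $\tilde S/\Gamma$ for the group $\Gamma$ generated by $\tilde O$, and convergence follows from Theorem~\ref{thm:convergence in M}. Your outline should be reorganized around this universal-cover picture.
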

\section{General Approach}
\name{sect:approach}
The approach of this paper is to use results already proved in \cite{HooperImmersions1}
about the immersive topology on translation structures on disks. In this section, we explain some
results which makes this approach work.

We begin by describing the philosophy of the approach.
Each translation surface $S \in \sM$, has a universal cover $\tilde S$ which inherits a translation structure
by pulling back the structure along the covering map $p_S:\tilde S \to S$. Let $(\Delta,x_0)$ be the pointed
disk and let $\tM=\sM(\Delta,x_0) \subset \sM$ be the collection
of all translation structures on the disk. Thus, $\tilde S$ lies in $\tM$, and we can recover $S$ as the quotient of $\tilde S$ modulo the deck group of the cover. So, another way to think of a point in $\sM$ is as a choice of a point in $\tM$
together with a discrete group of translation automorphisms of $\tM$. This approach brings to mind the point of view of 
the universal family of curves over moduli space. See \cite{Zvonkine12} for background. 

A third viewpoint on the moduli space $\sM$ is that we can associate a point $S \in \sM$ to its universal
cover $\tilde S \in \tM$ and the preimages of the basepoint under the covering map, $p_S^{-1}(o_S)$. The set
$p_S^{-1}(o_S)$ is a discrete subgroup of $\tilde S$ with the property that $\BC(\tilde S,\tilde s)=\tilde S$
for each $\tilde s \in p_S^{-1}(o_S)$. We think of $p_S^{-1}(o_S)$ as a subset of $\tE \subset \sE$, the canonical disk bundle over $\tM$. From this point of view, the topology on $\sM$ can be though of the ``geometric limit topology'' induced by the topologies on $\tM$ and $\tE$. That is, a sequence of translation surfaces $S_n$ with basepoints $o_n$
should converge to $S$ if the universal covers converge and the sequence of sets $p_n^{-1}(o_n) \subset \tilde S_n$
converge to $p_S^{-1}(o_S)$ within $\tE$. This point of view is formalized by Theorem \ref{thm:convergence in M}
below.

In the remainder of the section, we formally state results related to the ideas introduced above.

\begin{theorem}
\name{thm:universal cover}
The map $\sM \to \tM$ which sends a translation surface $S$ to its universal cover $\tilde S$
is continuous.
\end{theorem}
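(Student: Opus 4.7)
The plan is to check continuity on a sub-basis. Since $\tM \subset \sM$ carries the subspace topology, a sub-basis for $\tM$ consists of the intersections with $\tM$ of the four families $\sM_{\imm}(D)$, $\sM_{\not\imm}(U)$, $\sM_{+}(D,U)$, and $\sM_{-}(D,K)$ that define the immersive topology on $\sM$. I will compute the preimage of each under the map $S \mapsto \tilde S$ and show it is open in $\sM$.

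The argument rests on two observations. First, for any simply connected $A \in \PC$ (in particular, any open or closed disk), one has $A \imm S$ if and only if $A \imm \tilde S$. Indeed, the covering map $p_S:\tilde S \to S$ is a local translation, so simple connectedness of $A$ allows any immersion $A \imm S$ to be lifted to an immersion $A \imm \tilde S$ respecting basepoints, and conversely composition with $p_S$ pushes an immersion $A \imm \tilde S$ down to one $A \imm S$. Second, any immersion $\iota: A \imm \tilde S$ from a simply connected $A$ is automatically an embedding: since $\tilde S$ is simply connected, the developing map $\dev:\tilde S \to \R^2$ is well-defined, and $\dev \circ \iota : A \to \R^2$ is locally a translation on the connected domain $A$, hence globally a single translation of $\R^2$ and therefore injective. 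In particular, if $D$ is a closed disk then the unique immersion $\iota: D \emb \tilde S$ satisfies $\iota^{-1}(o_{\tilde S}) = \{o_D\}$.

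These two observations resolve all four cases. The first immediately gives that the preimage of $\sM_{\imm}(D) \cap \tM$ is $\sM_{\imm}(D)$ and the preimage of $\sM_{\not\imm}(U) \cap \tM$ is $\sM_{\not\imm}(U)$, both open by definition. For the basepointed families, injectivity of $\iota$ collapses the conditions $o_{\tilde S} \in \iota(U)$ and $o_{\tilde S} \not\in \iota(K)$ to the $S$-independent conditions $o_D \in U$ and $o_D \not\in K$, respectively. Consequently, the preimage of $\sM_{+}(D,U) \cap \tM$ equals $\sM_{\imm}(D)$ when $o_D \in U$ and is empty otherwise, and the preimage of $\sM_{-}(D,K) \cap \tM$ equals $\sM_{\imm}(D)$ when $o_D \not\in K$ and is empty otherwise. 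In every case the preimage is open, so $S \mapsto \tilde S$ is continuous.

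The only substantive ingredients are the lifting of immersions along $p_S$ and the construction of the developing map on the simply connected $\tilde S$; both are routine consequences of covering space theory for translation (more generally $(G,X)$-) structures, so I do not foresee a serious obstacle.
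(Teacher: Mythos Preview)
Your first observation is correct and matches the paper's argument: for simply connected $A$, one has $A \imm S$ iff $A \imm \tilde S$, so $\upsilon^{-1}(\tM_{\imm}(K)) = \sM_{\imm}(K)$ and $\upsilon^{-1}(\tM_{\not\imm}(U)) = \sM_{\not\imm}(U)$.

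Your second observation, however, is false, and this breaks the treatment of the $\sM_+$ and $\sM_-$ families. The assertion that $\dev \circ \iota: A \to \R^2$ is ``globally a single translation'' tacitly treats $A$ as a subset of $\R^2$. But $A$ is only an abstract closed disk with a translation structure; its own developing map need not be injective, so $A$ cannot in general be identified with a planar region. Concretely, let $P$ be the planar surface whose underlying disk is $\{z:|z|<10\}$ with developing map $f(z)=e^z-1$ and basepoint $0$. Take $D=\{|z|\le 8\}\subset P$. Then $\R^2$ is a planar surface (indeed the universal cover of a torus), the unique immersion $\iota:D\imm\R^2$ is $f|_D$, and $\iota^{-1}(o_{\R^2})=f^{-1}(0)\cap D\supset\{0,2\pi i\}$. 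So $\iota$ is not an embedding and $\iota^{-1}(o_{\tilde S})\neq\{o_D\}$. Consequently your computation of the preimages of $\sM_{+}(D,U)\cap\tM$ and $\sM_{-}(D,K)\cap\tM$ is incorrect: when $o_D\notin U$ the preimage need not be empty, and the dependence on $S$ is genuine.

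The paper sidesteps this difficulty entirely. Rather than using the four-family subbasis inherited from $\sM$, it invokes a result from the companion paper (stated here as Corollary~\ref{cor:subbasis M}) that $\tM$ already admits the smaller subbasis $\{\tM_{\imm}(K),\,\tM_{\not\imm}(U)\}$. With that in hand, only your first observation is needed, and the proof is two lines. If you want to salvage your route without citing that corollary, you would have to give an independent argument that $\upsilon^{-1}(\sM_+(D,U)\cap\tM)$ and $\upsilon^{-1}(\sM_-(D,K)\cap\tM)$ are open in $\sM$; this is not obvious and essentially amounts to reproving the subbasis reduction.
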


For a translation surface $S$, let $p_S:\tilde S \to S$ be the covering map. The domain of this map can most broadly be considered to be
\begin{equation}
\name{eq:projection domain}
\sP=\{(S,\tilde s) \in \sM \times \tE~:~\tilde s \in \tilde S\}.
\end{equation}
We say the {\em covering projection}
is the map
$$p: \sP \to \sE; \quad (S,\tilde s) \mapsto p_S(\tilde s).$$

\begin{theorem}[Projection Theorem]
\name{thm:covering projection}
The covering projection is continuous.
\end{theorem}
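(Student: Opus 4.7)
My approach is to prove sequential continuity of $p$ at each point of $\sP$, which is equivalent to continuity because $\sP$ inherits first countability from its ambient product $\sM \times \tE$ (both factors being second countable by Theorem \ref{thm:immersive}) and $\sE$ is Hausdorff. The topology on $\sE$ is generated by $\pi^{-1}(\sW)$ and $\sE_+(D, U)$, and the $\pi^{-1}$-case causes no trouble since $\pi \circ p$ is just the continuous first-factor projection $\sP \to \sM$. So the whole question reduces to showing that whenever $(S_n, \tilde s_n) \to (S_0, \tilde s_0)$ in $\sP$, one has $p_{S_n}(\tilde s_n) \to p_{S_0}(\tilde s_0) =: s_0$ in $\sE$.

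The key idea is to express $p_{S_n}(\tilde s_n)$ as a continuous evaluation of the map $I_{\tilde U}$ from Proposition \ref{prop:continuity immersions} on an open disk $\tilde U$ chosen once and for all inside the base surface $\tilde S_0$. Pick a closed disk $\tilde D^\star \subset \tilde S_0$ containing both $o_{\tilde S_0}$ and $\tilde s_0$ in its interior, and set $\tilde U := (\tilde D^\star)^\circ$. The Embedding Theorem (Theorem \ref{thm:embedding}) combined with continuity of the universal cover map (Theorem \ref{thm:universal cover}) yields a unique embedding $\tilde \iota_n : \tilde D^\star \emb \tilde S_n$ for all sufficiently large $n$; openness of $\sE_+(\tilde D^\star, \tilde U)$ in $\sE$, applied through the continuous composition $\sP \to \tE \subseteq \sE$ sending $(S, \tilde s) \mapsto \tilde s$, then forces $\tilde s_n \in \tilde \iota_n(\tilde U)$. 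Let $u_n \in \tilde U$ denote the unique preimage, which exists by the injectivity of the embedding.

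Next I would show $u_n \to \tilde s_0$ in $\tilde U$. For any open neighborhood $\tilde N \subset \tilde U$ of $\tilde s_0$, the same openness argument applied to $\sE_+(\tilde D^\star, \tilde N) \ni (\tilde S_0, \tilde s_0)$ gives $\tilde s_n \in \tilde \iota_n(\tilde N)$ eventually, and injectivity of $\tilde \iota_n$ then yields $u_n \in \tilde N$. Since this holds for every such $\tilde N$, we obtain $u_n \to \tilde s_0$. Uniqueness of immersions identifies $p_{S_n} \circ \tilde \iota_n|_{\tilde U}$ with the unique immersion $\tilde U \imm S_n$ parametrizing $I_{\tilde U}(S_n, \cdot)$; consequently
\[
p_{S_n}(\tilde s_n) = I_{\tilde U}(S_n, u_n) \longrightarrow I_{\tilde U}(S_0, \tilde s_0) = p_{S_0}(\tilde s_0)
\]
by joint continuity of $I_{\tilde U}$, as required.

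I do not anticipate a serious obstacle in this argument, which is essentially a diagram chase assembled from the Embedding Theorem, the joint continuity of immersions, continuity of the universal cover map, and uniqueness of immersions. The one place that demands care is the identification $p_{S_n} \circ \tilde \iota_n|_{\tilde U} = I_{\tilde U}(S_n, \cdot)$: it rests on the fact that any two immersions from the same simply connected, pointed open disk into the same target surface must coincide, which channels the covering-map data through analytic continuation from the basepoint and is the only reason the argument works uniformly in $n$.
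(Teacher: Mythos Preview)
Your argument is circular. The decisive step is the final appeal to Proposition~\ref{prop:continuity immersions} (joint continuity of $I_{\tilde U}$), but in the paper's logical development that proposition is proved \emph{as a corollary of} the Projection Theorem: the proof there writes $I_U(S,u)=p_S\circ \tilde I_U(\tilde S,u)$ and invokes Theorem~\ref{thm:covering projection} to pass from continuity of $\tilde I_U$ (known from \cite{HooperImmersions1}) to continuity of $I_U$. So you cannot cite Proposition~\ref{prop:continuity immersions} here without first establishing the Projection Theorem by other means. Concretely, your chain $p_{S_n}(\tilde s_n)=I_{\tilde U}(S_n,u_n)\to I_{\tilde U}(S_0,\tilde s_0)$ is exactly the statement that $(S_n,\tilde s_n)\mapsto p_{S_n}(\tilde s_n)$ is continuous, rephrased through the disk $\tilde U$; nothing has been gained.

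The paper avoids this by arguing directly with the subbasis: given $(S,\tilde s)\in p^{-1}\big(\sE_+(D,U)\big)$, it lifts the immersion $D\imm S$ to $\tilde S$, uses a deck transformation to move the relevant lift of $s$ to $\tilde s$, encloses everything in a larger closed disk $\tilde D\subset\tilde S$, and then exhibits an explicit open neighborhood of $(S,\tilde s)$ in $\sP$ (built from $\sM_+(\tilde D,\tilde B_o)$ and $\sE_+(\tilde D,\tilde B_s)$ for suitable small balls) that maps into $\sE_+(D,U)$. The verification uses only ball embeddings and uniqueness of immersions, not Proposition~\ref{prop:continuity immersions}. If you want to salvage your sequential approach, you would need to replace the last line by a direct argument showing $p_{S_n}\circ\tilde\iota_n(u_n)$ lands in any prescribed $\sE_+(D,U)$ containing $s_0$; that is essentially what the paper does, and it requires the deck-transformation and ball-shifting construction rather than a black-box continuity result.
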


Finally, we state results describing convergence criteria and consequences of convergence in both $\sM$ and $\sE$.

\begin{theorem}[Convergence in $\sM$]
\name{thm:convergence in M}
Let $\langle S_n \in \sM\rangle$ be a sequence of translation surfaces with basepoints $o_n \in S_n$. 
Let $p_n: \tilde S_n \to S_n$ be the universal covering maps.
Then, the sequence $\langle S_n \rangle$ converges if and only if the sequence of universal covers $\tilde S_n \in \tM$ converge to some $\tilde S \in \tM$ and there is a discrete set of points $\tilde O \subset \tilde S$ such that the following statements hold:
\begin{enumerate}
\item For every $\tilde o \in \tilde O$, there is sequence $\langle \tilde o_n \in p_n^{-1}(o_n)\rangle$
converging to $\tilde o$ in $\tE$. 
\item For every increasing sequence of integers $\langle n_k \rangle$ and every 
sequence of points $\langle \tilde o_{n_k} \in p_{n_k}^{-1}(o_{n_k})\rangle$ which converges to some point 
$\tilde o \in \tE$,
we have $\tilde o \in \tilde O$.
\end{enumerate}
Moreover, if these statements hold, then $\tilde S$ is the universal cover of $S=\lim_{n \to \infty} S_n$,
and $\tilde O=p_S^{-1}(\tilde o_S)$.
\end{theorem}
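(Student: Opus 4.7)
The plan is to characterize convergence in $\sM$ by translating each of the four families of basic opens ($\sM_{\imm}$, $\sM_{\not\imm}$, $\sM_+$, $\sM_-$) into joint conditions on the universal cover (controlled by $\tM$) and on the basepoint preimage set (controlled by $\tE$). The key organizing fact is that, by unique lifting through $p_S$ and uniqueness of immersions out of a path-connected set, every immersion of a closed disk $D$ containing a basepoint into $S$ lifts to the unique immersion $D \imm \tilde S$; consequently $D \imm S$ iff $D \imm \tilde S$, and the incidence conditions involving $o_S$ in the $\sM_+$ and $\sM_-$ sets correspond to incidence of the lifted image with $p_S^{-1}(o_S)$.

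For the forward direction, assume $S_n \to S$ in $\sM$. Theorem \ref{thm:universal cover} gives $\tilde S_n \to \tilde S$ in $\tM$, and $\tilde O := p_S^{-1}(o_S)$ is discrete because $p_S$ is a covering map. Condition (2) follows directly from the Projection Theorem (Theorem \ref{thm:covering projection}): for any subsequential limit $\tilde o_{n_k} \to \tilde o$ in $\tE$ with $\tilde o_{n_k} \in p_{n_k}^{-1}(o_{n_k})$, continuity of $p:\sP \to \sE$ gives $(S_{n_k}, o_{n_k}) = p(S_{n_k}, \tilde o_{n_k}) \to (S, p_S(\tilde o))$, while the basepoint-selection map $S \mapsto (S,o_S)$ (continuous because its pullback of each of the generating basic opens of $\sE$ is a generating basic open of $\sM$) gives $(S_{n_k}, o_{n_k}) \to (S, o_S)$, so Hausdorffness of $\sE$ (Theorem \ref{thm:immersive}) yields $p_S(\tilde o) = o_S$ and $\tilde o \in \tilde O$. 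For condition (1), given $\tilde o \in \tilde O$, I run a diagonal argument along a countable neighborhood basis of $(\tilde S, \tilde o)$ in $\tE$ built from basic opens $\sE_+(D, U)$, where $D \subset \tilde S$ is a closed disk containing $o_{\tilde S}$ and $\tilde o$ in its interior and $U \subset D^\circ$ is an open around $\tilde o$. Each such $\sE_+(D,U)$ places $S \in \sM_+(D, U)$ via the immersion $p_S|_D$, so eventually $S_n \in \sM_+(D, U)$; lifting the witnessing immersion $\iota_n: D \imm S_n$ to $\tilde \iota_n: D \imm \tilde S_n$ produces a point of $p_n^{-1}(o_n) \cap \tilde \iota_n(U)$ placing $(\tilde S_n, \tilde o_n)$ in $\sE_+(D, U)$. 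Diagonalizing yields a single sequence $\tilde o_n \to \tilde o$.

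For the backward direction, the conditions identify the limit surface. For each $\tilde o \in \tilde O$, choose by (1) an approximating sequence $\tilde o_n = \gamma_n(o_{\tilde S_n}) \in p_n^{-1}(o_n)$ with $\gamma_n$ a deck transformation of $\tilde S_n \to S_n$, and apply the convergence-of-affine-automorphisms corollary (following Theorem \ref{thm:affine}) with derivatives $\id$ to produce a translation automorphism $\gamma_{\tilde o}$ of $\tilde S$ sending $o_{\tilde S}$ to $\tilde o$. The collection $\Gamma = \{\gamma_{\tilde o} : \tilde o \in \tilde O\}$ is a group: closure under composition and inversion follows from applying the corollary to the sequences $\gamma_n \gamma_n'$ and $\gamma_n^{-1}$, while condition (2) guarantees that the corresponding limits of $o_{\tilde S_n}$-orbits land in $\tilde O$. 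Setting $S = \tilde S / \Gamma$ gives a translation surface with universal cover $\tilde S$ and $p_S^{-1}(o_S) = \tilde O$. I then verify $S_n \to S$ one family of basic opens at a time: $\sM_\imm(D)$ and $\sM_{\not\imm}(U)$ reduce through lifting to convergence $\tilde S_n \to \tilde S$; $\sM_+(D,U)$ follows from (1) applied to any $\tilde o \in \tilde \iota(U) \cap \tilde O$; and $\sM_-(D, K)$ follows from (2) together with Proposition \ref{prop:E-}, which provides the openness needed to exclude every preimage of $o_n$ from $\tilde \iota_n(K)$ for large $n$.

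The main obstacle is the backward direction's reconstruction of the deck group $\Gamma$ from the discrete data $(\tilde S, \tilde O)$: showing that $\Gamma$ is closed under composition and inversion, acts discretely and freely, and has orbit exactly $\tilde O$, requires coordinated use of the convergence-of-affine-automorphisms corollary together with both halves of conditions (1) and (2) — condition (2) prevents the $o_{\tilde S}$-orbit of $\Gamma$ from exceeding $\tilde O$, while condition (1) ensures it is not smaller. A secondary technical point, used implicitly throughout, is the identification of the unique lift of an immersion $D \imm S_n$ to $\tilde S_n$ with the unique abstract immersion $D \imm \tilde S_n$; this is what makes the translation between basic opens of $\sM$ and conditions in $\tM$ together with $\tE$ work cleanly.
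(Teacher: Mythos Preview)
Your overall shape matches the paper's, but there is a genuine dependency problem in the backward direction. You invoke the ``convergence of affine automorphisms'' corollary that follows Theorem~\ref{thm:affine} to produce each $\gamma_{\tilde o}$ and to close $\Gamma$ under composition and inversion. That corollary is deduced from Theorems~\ref{thm:basepoint changing map}, \ref{thm:basepoint changing isomorphism}, and \ref{thm:affine}, and each of those is proved in the paper \emph{using} Theorem~\ref{thm:convergence in M} (or Theorem~\ref{thm:convergence in E}, which in turn rests on it). So the argument as written is circular. The paper avoids this by working entirely at the disk level: it uses Theorem~\ref{thm:basepoint disk} (continuity of $\BC|_{\tE}$ and of $\beta$, $\beta^{-1}$ on $\tE$), which comes from the prequel and does not depend on anything being proved here, to show $\BC(\tilde S,\tilde q)=\lim \BC(\tilde S_n,\tilde q_n)=\tilde S$ and then that $\Gamma=\{\beta_{\tilde q}:\tilde q\in\tilde O\}$ is a group. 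Your proof can be repaired simply by replacing the corollary with these disk-level results. Relatedly, you pass from ``$\Gamma$ is a group'' to ``$\tilde S/\Gamma$ is a translation surface'' without argument; the paper isolates this as Lemma~\ref{lem:convergence}, where discreteness of $\tilde O$ together with continuity of $\beta$, $\beta^{-1}$ on $\tE$ is used to rule out accumulation of the $\Gamma$-action.

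On the other hand, you do something the paper's proof does not spell out: after constructing $S$, you actually verify $S_n\to S$ by checking the four subbasis families. That step is needed, and your handling of $\sM_{\imm}$, $\sM_{\not\imm}$, and $\sM_+$ is correct (the $\sM_+$ case is exactly that $(\tilde S,\tilde o)\in\tE_+(D,U)$ is open, pushed down by $p_n$). For $\sM_-$, though, Proposition~\ref{prop:E-} is not quite the right tool: there are infinitely many points of $p_n^{-1}(o_n)$ to exclude from $\tilde\iota_n(K)$, and no single open $\sE_-$ neighborhood handles them all. The clean argument is by contradiction and compactness: if $o_{n_k}\in\iota_{n_k}(K)$ along a subsequence, pick witnesses $d_k\in K$, pass to $d_k\to d\in K$, use joint continuity of immersions into $\tE$ to get $\tilde\iota_{n_k}(d_k)\to\tilde\iota(d)$, and then condition~(2) forces $\tilde\iota(d)\in\tilde O$, contradicting $\tilde\iota(K)\cap\tilde O=\emptyset$.
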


\begin{theorem}[Convergence in $\sE$]
\name{thm:convergence in E}
Let $\langle (S_n,s_n) \in \sE\rangle$ be a sequence, and let $(S,s) \in \sE$. 
Then, $\langle (S_n,s_n) \rangle$ converges
to $(S,s)$ if the following statements are satisfied:
\begin{enumerate}
\item The sequence $\langle S_n \in \sM \rangle$ converges to $S \in \sM$. 
\item There is a sequence $\langle \tilde s_n \in p_n^{-1}(s_n) \subset \tE \rangle$ which converges
to a point in $p^{-1}(s) \subset \tE$. 
\end{enumerate}
Conversely, if $\langle (S_n,s_n) \rangle$ converges
to $(S,s)$, then $S_n$ converges to $S$ and for each $\tilde s \in p_S^{-1}(s)$,
there is a sequence  $\langle \tilde s_n \in p_n^{-1}(s_n) \rangle$ which converges to $\tilde s$. 
\end{theorem}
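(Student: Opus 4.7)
The plan is to dispatch the two implications separately: the forward direction follows directly from the Projection Theorem, while the converse requires an explicit construction of lifts using the basic open sets of $\sE$.

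For the forward direction, I assume (1) $S_n \to S$ in $\sM$ and (2) $\tilde s_n \to \tilde s$ in $\tE$ with $\tilde s_n \in p_n^{-1}(s_n)$ and $\tilde s \in p_S^{-1}(s)$. The pairs $(S_n, \tilde s_n)$ and $(S, \tilde s)$ both lie in $\sP$, and hypotheses (1) and (2) combine to give joint convergence $(S_n, \tilde s_n) \to (S, \tilde s)$ in $\sM \times \tE$, which transfers to the subspace $\sP$. By the Projection Theorem \ref{thm:covering projection}, $p$ is continuous, so $p(S_n, \tilde s_n) = s_n$ converges in $\sE$ to $p(S, \tilde s) = s$; interpreting points of $\sE$ as pairs via Convention \ref{conv1}, this reads exactly as $(S_n, s_n) \to (S, s)$.

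For the converse, $S_n \to S$ in $\sM$ is immediate from the continuity of $\pi:\sE\to\sM$. The substantive task is to construct, for each $\tilde s \in p_S^{-1}(s)$, a sequence $\tilde s_n \in p_n^{-1}(s_n)$ converging to $\tilde s$ in $\tE$. I first apply Theorem \ref{thm:convergence in M} to obtain $\tilde S_n \to \tilde S$ in $\tM$. Next I select a closed disk $\tilde D \subset \tilde S$ containing both $o_{\tilde S}$ and $\tilde s$ in its interior $\tilde V$; such a disk is readily built in any boundaryless surface by thickening a path. The Embedding Theorem \ref{thm:embedding} now guarantees $\tilde D \emb \tilde S_n$ for all sufficiently large $n$, yielding a unique basepoint-preserving embedding $\tilde \iota_n:\tilde D \emb \tilde S_n$; by uniqueness of immersions, the composition $p_n \circ \tilde \iota_n$ is the unique immersion $\iota_n:\tilde D \imm S_n$. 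To build the sequence, I choose a decreasing family $\tilde U_1 \supset \tilde U_2 \supset \cdots$ of open neighborhoods of $\tilde s$ in $\tilde V$ with $\bigcap_k \tilde U_k = \{\tilde s\}$. Because $p_S$ restricts to an immersion $\tilde D \imm S$ with $s \in p_S(\tilde U_k)$ for every $k$, we have $(S, s) \in \sE_+(\tilde D, \tilde U_k)$; since this set is open in $\sE$ by definition, there exist strictly increasing thresholds $N_1 < N_2 < \cdots$ so that $(S_n, s_n) \in \sE_+(\tilde D, \tilde U_k)$ whenever $N_k \le n < N_{k+1}$, producing $\tilde u_n \in \tilde U_k$ with $\iota_n(\tilde u_n) = s_n$. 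Setting $\tilde s_n := \tilde \iota_n(\tilde u_n)$ gives $p_n(\tilde s_n) = \iota_n(\tilde u_n) = s_n$, so $\tilde s_n \in p_n^{-1}(s_n)$ as required. Finally, since $\tilde u_n \to \tilde s$ in $\tilde V$ and $\tilde S_n \to \tilde S$ in $\sI(\tilde V)$, the Joint Continuity of Immersions (Proposition \ref{prop:continuity immersions}) yields $\tilde s_n = I_{\tilde V}(\tilde S_n, \tilde u_n) \to I_{\tilde V}(\tilde S, \tilde s) = \tilde s$ in $\sE$, hence in $\tE$.

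The main technical subtlety I anticipate is the need to perturb from $\tilde s$ to the nearby point $\tilde u_n$: in general the lifted immersion $\tilde \iota_n$ does not send $\tilde s$ itself to a preimage of $s_n$, so one must instead select $\tilde u_n$ from a shrinking neighborhood to ensure the constructed $\tilde s_n$ actually projects to $s_n$. Every other step is a direct application of the continuity and openness results already established.
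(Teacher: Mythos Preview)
Your proof is correct and follows essentially the same route as the paper: the forward direction is the Projection Theorem, and the converse is handled by picking a closed disk in $\tilde S$ containing $\tilde s$, using the open sets $\sE_+(\tilde D,\tilde U_k)$ to find preimages $\tilde u_n$ approaching $\tilde s$, and then invoking joint continuity of immersions to push these to lifts $\tilde s_n\in\tilde S_n$. The only cosmetic differences are that the paper uses metric balls $B_\epsilon$ rather than a countable nested family $\tilde U_k$, invokes Theorem~\ref{thm:universal cover} directly for $\tilde S_n\to\tilde S$ rather than going through Theorem~\ref{thm:convergence in M}, and does not appeal to the Embedding Theorem (openness of $\sM_{\imm}(\tilde D)$ already suffices, since you never use injectivity of $\tilde\iota_n$).
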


\section{Outline of remainder of paper}
\name{sect:outline}
In \S \ref{sect:cover}, we investigate how a translation structure on a surface gives rise to a translation structure on its universal cover, and explore how this idea interacts with immersions.

In \S \ref{sect:countability}, we prove that the immersive topologies on $\sM$ and $\sE$ are second countable. This proves part of Theorem \ref{thm:immersive}.

In \S \ref{sect:embeddings}, we investigate embeddings. We prove Theorems \ref{thm:embedding} and \ref{thm:disjointness}
as well as Proposition \ref{prop:E-}.

In \S \ref{sect:disk}, we further investigate the connection between a surface and its universal cover. We prove all
the results in \S \ref{sect:approach},
and prove results which will allow us to use work done in \cite{HooperImmersions1} for the remainder of the article.
We also prove that the immersive topologies on $\sM$ and $\sE$ are Hausdorff, completing the proof of Theorem \ref{thm:immersive}.

In \S \ref{sect:basepoint}, we investigate the action of moving the basepoint. We prove Theorems \ref{thm:basepoint changing map} and \ref{thm:basepoint changing isomorphism}.

In \S \ref{sect:affine}, we prove that the $\GL(2,\R)$ actions on $\sM$ and $\sE$ are continuous.

In \S \ref{sect:compactness}, we prove Theorem \ref{thm:compactness}, which states that the set of surfaces
into which an open disk embeds is compact.
\section{The universal cover and immersions}
\name{sect:cover}

\subsection{Developing map}
\name{sect:developing map}
Let $\Delta$ denote the open topological disk with basepoint $\tilde x_0$. 
Suppose that $\tilde \sA$ is a translation atlas on $\Delta$. Then by analytic continuation, there is a unique
local homeomorphism $\mathit{dev}:\Delta \to \R^2$ so that:
\begin{itemize}
\item $\mathit{dev}(\tilde x_0)=\0$. 
\item For each chart $(\tilde U,\tilde \phi) \in \tilde \sA$, the map
$\tilde \phi$ differs from $\mathit{dev}|_{\tilde U}$ locally only by translation. That is,
for each $\tilde x \in \tilde U$, 
there is an open neighborhood $\tilde V$ of $x$ inside $\tilde U$
so that $\tilde \phi_{\tilde V}$ and $\mathit{dev}|_{\tilde V}$ differ by translation.
\end{itemize}
We call the map $\mathit{dev}$ the {\em developing map} of the translation structure.

Let $\tM=\sM(\Delta,\tilde x_0)$ and $\tE=\sE(\Delta,\tilde x_0)$. 
Following \cite{HooperImmersions1}, we call the fibers of the restricted projection
$$\tpi=\pi|_{\tE}:\tE \to \tM$$
{\em planar surfaces}, and denote them by letters such as $P$ and $Q$. An alternate definition
is that a planar surface is a translation surface which is homeomorphic to a disk.
Each planar surface has an associated developing map. The union of these maps gives the {\em bundle-wide developing map} $\dev:\tE \to \R^2$. We denote the individual developing maps of planar surfaces by restriction: $\dev|_P:P \to \R^2$.

\subsection{The universal cover}
A translation structure $(X,x_0,\sA)$ induces a translation structure on the universal cover
$(\tilde X,\tilde x_0)$ of $(X,x)$. Let $p:\tilde X \to X$ be the covering projection which satisfies
$p(\tilde x_0)=x_0$. Given the translation atlas $\sA$ on $(X,x_0)$, consider the atlas
\begin{equation}
\name{eq:lifted charts}
\big\{(\tilde U,\phi \circ p)~:~\text{$\tilde U=\pi^{-1}(U)$ and $(U,\phi)\in \sA$}\big\}.
\end{equation}
It can be observed that this new atlas is also a translation atlas, and thus it can be extended to a unique
maximal translation atlas, which we denote by $\tilde \sA$.

If $S \in \sM$ is a translation surface (with basepoint $o_S$), its universal cover also inherits a translation structure as above. This universal cover is therefore isomorphic to a unique planar surface which we denote by $\tilde S \in \tM$. We denote the basepoint of $\tilde S$ by $\tilde o_S$. 
The isomorphism can be used to produce a covering map $p_S:\tilde S \to S$ which satisfies
$p_S(\tilde o_S)=o_S$. 

\begin{remark}
We can recover a translation structure on a surface $(X,x_0)$ from the developing 
map $\mathit{dev}:\tilde X \to \R^2$. Indeed, an open set
$\tilde U \subset \tilde X$ so that $p_S|_{\tilde U}:\tilde U \to X$ is a homeomorphism onto its image,
the pair $\big(p_S(\tilde U), \mathit{dev} \circ (p_S|_{\tilde U})^{-1}\big)$ is a compatible chart.
The maximal collection of such charts recovers the translation structure.
We view the developing map as easier to work than the translation atlas, but any statement
which can be made in terms of the developing map can also be made in terms of the translation atlas.
\end{remark}

In this paper, we will not be interested in immersing all types of subsets of translation surfaces. We will primarily be interested in 
$$\SC(S)=\{A \in \PC(S)~:~\text{$A$ is locally path-connected and simply connected}\}.$$
We explain in some propositions why immersions are more natural with respect to sets in $\SC(S)$. 

\begin{proposition}
\name{prop:lift}
If $A \in \SC(S)$, then
there is an embedding $\ell_A:A \emb \tilde S$. Moreover, the composition
$p_S \circ \ell_A$ is the identity on $A$. 
\end{proposition}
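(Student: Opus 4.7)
The plan is to reduce this to the standard lifting criterion from covering space theory. The hypothesis $A \in \SC(S)$ means exactly that $A$ is path-connected, locally path-connected, and simply connected, which is precisely what the classical lifting criterion requires in order to lift a continuous map from $A$ to $S$ along the covering $p_S : \tilde S \to S$. Concretely, I would apply the lifting criterion to the inclusion map $i : A \hookrightarrow S$, choosing the lift $\ell_A : A \to \tilde S$ uniquely determined by the basepoint condition $\ell_A(o_S) = \tilde o_S$. By construction $p_S \circ \ell_A = i$, which in particular gives $p_S \circ \ell_A = \id_A$ as required.

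Next I would verify that $\ell_A$ is an embedding in the sense used in the paper, i.e.\ an injective immersion. Injectivity is immediate: if $\ell_A(a_1) = \ell_A(a_2)$, then applying $p_S$ gives $a_1 = a_2$, since $p_S \circ \ell_A$ is the inclusion of $A$ into $S$. For the immersion condition, I would exploit the way $\tilde S$'s translation atlas was defined in \eqref{eq:lifted charts}: every chart on $\tilde S$ has the form $(\tilde U, \phi \circ p_S)$ for some chart $(U,\phi)$ on $S$ with $\tilde U$ a sheet of $p_S^{-1}(U)$. Given $s \in A$, pick a chart $(U,\phi) \in \sA$ with $s \in U$, let $\tilde U$ be the sheet containing $\ell_A(s)$, and observe that for $s'$ near $s$ in $A$, $(\phi \circ p_S) \circ \ell_A(s') = \phi(s')$. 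Thus the immersion translation vector $\v$ in the definition is simply $\0$, and the basepoint condition $\ell_A(o_S) = \tilde o_S$ is built into the choice of lift.

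I do not expect any serious obstacle here; the work is really done by the lifting criterion, and the hypothesis on $A$ was engineered to invoke it. The one point I would be careful about is whether $A$, with the subspace topology from $S$, is actually locally path-connected in its own right (not just locally path-connected as a subspace of a manifold), since this is what the lifting criterion needs — but this is precisely the content of the assumption $A \in \SC(S)$. Uniqueness of the immersion, already established in the preceding proposition, then confirms that $\ell_A$ is the only candidate, so there is no ambiguity in the construction.
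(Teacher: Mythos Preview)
Your proposal is correct and follows essentially the same approach as the paper: both invoke the classical lifting criterion for covering spaces to lift the inclusion $A\hookrightarrow S$ through $p_S$, and both verify the immersion condition via the lifted charts of equation~\eqref{eq:lifted charts}. Your version is in fact slightly more streamlined---the paper first passes to the universal cover $\tilde A$ of $A$ and then identifies $\tilde A$ with $A$ using simple connectivity, whereas you apply the lifting criterion directly to $A$; you also make the (easy) injectivity argument explicit, which the paper leaves implicit.
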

We call the map $\ell_A$ the {\em lifting map} and call $\tilde A=\ell_A(A)$
the {\em lift} of $A$.

\begin{proof}
Since $A$ is connected and locally path-connected, it admits a universal cover $\tilde A$. By general covering space theory, the inclusion of $A$ into $S$ lifts to a map $\tilde A \to \tilde S$ so that $\tilde o_S \mapsto o_S$. Since $A$ is simply connected, we can identify $\tilde A$ with $A$. This gives our map $\ell_A:A \to \tilde S$. It is an immersion because $p_S \circ \ell_A$ is the inclusion of $A$ into $S$. In particular, it respects the translation structure defined by the charts given in equation \ref{eq:lifted charts}.
\end{proof}

\begin{proposition}
\name{prop:lifted immersion}
Let $S$ and $T$ be a translation surfaces.
Let $A \in \SC(S)$ and let $B \subset T$ be an arbitrary subset containing the basepoint. The following statements are equivalent:
\begin{enumerate}
\item There is an immersion $\iota:A \imm B$.
\item There is an immersion $\tilde \iota:\tilde A \to \tilde T$ with $p_T \circ \tilde \iota(\tilde A) \subset B$.
\item There is a continuous map $\tilde \iota:\tilde A \to \tilde T$ with $p_T \circ \tilde \iota(\tilde A) \subset B$ so that
$\tilde \iota(\tilde o_S)=\tilde o_T$ and
 $\dev|_{\tilde S}(\tilde s)=\dev|_{\tilde T} \circ \tilde \iota(\tilde s)$ for all $\tilde s \in \tilde A$.
\end{enumerate}
Moreover, the maps $\tilde \iota$ in statements (2) and (3) are the same.
\end{proposition}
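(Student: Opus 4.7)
The plan is to prove the cycle $(1) \Rightarrow (2) \Rightarrow (3) \Rightarrow (1)$ and then deduce that the maps in $(2)$ and $(3)$ coincide from the uniqueness of immersions. Throughout I would rely on Proposition \ref{prop:lift} to obtain the lifting map $\ell_A: A \emb \tilde S$ with image $\tilde A$, together with the construction in equation \ref{eq:lifted charts} which ensures that the covering projections $p_S$ and $p_T$ are themselves local translation isomorphisms with respect to their induced atlases.

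For $(1) \Rightarrow (2)$, I would form the composition $\iota \circ p_S|_{\tilde A}: \tilde A \to T$. Since $\ell_A$ is a homeomorphism onto $\tilde A$, the set $\tilde A$ inherits simple connectedness and local path-connectedness from $A$, so the standard covering-space lifting criterion produces a unique continuous map $\tilde \iota: \tilde A \to \tilde T$ with $p_T \circ \tilde \iota = \iota \circ p_S|_{\tilde A}$ and $\tilde \iota(\tilde o_S) = \tilde o_T$; then $p_T \circ \tilde \iota(\tilde A) = \iota(A) \subset B$. To upgrade $\tilde \iota$ from continuous to a translation-respecting immersion, I would note that at each $\tilde s \in \tilde A$ one has $\tilde \iota = (p_T|_W)^{-1} \circ \iota \circ p_S|_{\tilde U}$ on a suitable neighborhood, where all three factors locally respect the translation structures.

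For $(2) \Rightarrow (3)$, the basepoint condition $\tilde \iota(\tilde o_S) = \tilde o_T$ is part of the definition of immersion. The compatibility condition in that same definition says that locally $\dev|_{\tilde T} \circ \tilde \iota$ and $\dev|_{\tilde S}$ differ by a translation vector; analytic continuation along paths in the connected set $\tilde A$ shows this vector is constant; and evaluating at $\tilde o_S$, where both developing maps return $\0$, shows it is zero. For $(3) \Rightarrow (1)$, I would set $\iota = p_T \circ \tilde \iota \circ \ell_A^{-1}$, whose image lies in $B$ by assumption and which carries $o_S$ to $o_T$ by a direct trace. Translation-compatibility would follow from the remark opening \S \ref{sect:cover}: charts on $S$ and $T$ arise by pushing developing-map-defined charts on the universal covers through local inverses of $p_S$ and $p_T$ respectively, and in such charts the hypothesis $\dev|_{\tilde S} = \dev|_{\tilde T} \circ \tilde \iota$ says exactly that $\iota$ is locally a translation by $\0$.

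The ``moreover'' clause follows from the uniqueness of immersions (stated just after the definition of immersion), since the map in $(3)$, once known to preserve developing maps, is itself an immersion $\tilde A \imm \tilde T$ with the correct basepoint behavior and therefore must agree with the immersion produced in $(2)$. I anticipate the main obstacle to be the careful chart-level verification in $(1) \Rightarrow (2)$ that the covering-space lift is a translation-respecting immersion and not merely a continuous map; this reduces to unpacking equation \ref{eq:lifted charts} and verifying that $p_S$ and $p_T$ are local translation isomorphisms, but the argument must be assembled carefully because the lift is initially only given by the abstract covering-space lifting criterion.
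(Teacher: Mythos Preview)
Your proof is correct and follows essentially the same approach as the paper. The paper organizes the argument as $(2) \Leftrightarrow (3)$ directly (noting that the developing maps are themselves single global charts in the maximal translation atlases on $\tilde S$ and $\tilde T$, so (3) is just the immersion condition read off in these charts), then derives $(1)$ from $(2)$ via the composition $p_T \circ \tilde\iota \circ \ell_A$ and obtains $(2)$ from $(1)$ by covering-space lifting exactly as you do; your cyclic organization and more explicit chart-level verifications amount to the same proof.
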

The situation resulting from the existence of an immersion $\iota:A \imm B$ with $A \in \SC(S)$ is summarized by the following commutative diagram:
\begin{center}
\begin{tikzcd}[row sep=tiny, column sep=normal]
& \tilde A \arrow{dl}[above,sloped]{\dev|_{\tilde S}} \arrow[squiggly]{dd}{\tilde \iota} \arrow[yshift=0.2em]{r}{p_S} & A \arrow[squiggly]{dd}{\iota} \arrow[yshift=-0.2em]{l}[below]{\ell_A} \\
\R^2  & \\
& \tilde T \arrow{ul}[below,sloped]{\dev|_{\tilde T}} \arrow{r}{p_T} & T
\end{tikzcd}
\end{center}
We call the map $\tilde \iota$ the {\em lifted immersion}.

\begin{proof}
Statements (2) and (3) are equivalent, because the translation structures are completely determined by the single charts
$(\tilde S,\dev|_{\tilde S})$ and $(\tilde T, \dev|_{\tilde T})$. Statement (3) simply restates the definition of immersion but restricted to consider these charts. These statements imply (1) since $p_T \circ \tilde \iota \circ \ell_A$ is an immersion of $A$ into $B$. Finally, given $\iota$, we can construct $\tilde \iota$ by following the proof of the prior proposition with $\iota$ replacing the inclusion of $A$ into $S$.
\end{proof}
\section{Second countability}
\name{sect:countability}
Our proof of second countability of the topologies on $\sM$ and $\sE$ essentially follows from work
done in \S \ref*{I-sect:rectangular unions} of \cite{HooperImmersions1}. We offer an explicit countable subbasis for the immersive topologies on $\sM$ and $\sE$.

We recall some definitions from \S \ref*{I-sect:rectangular unions} of \cite{HooperImmersions1}.
An open (resp. closed) {\em rectangle} $R$ in a planar surface $P$ is a subset so that the developing map
restricted to $R$ is a homeomorphism onto an open (resp. closed) rectangle in $\R^2$.
We call a rectangle {\em rational} if the vertices of the image rectangle lie in $\Q^2$. 
An open (resp. closed) {\em rational rectangular union} in $P$ is a finite union of open (resp. closed) rational rectangles in 
$P$ which is connected and whose boundary consists of a union of closed curves.

\begin{theorem}[Second countability of $\sM$]
The collection of subsets of $\sM$ of the following four types give a countable subbasis for the immersive topology:
\begin{itemize}
\item Sets of the form $\sM_{\imm}(D)$, where $D \in \cdisk$ is a rational rectangular union.
\item Sets of the form $\sM_{\not \imm}(U)$, where $U \in \disk$ is a rational rectangular union.
\item Sets of the form $\sM_{+}(D,U)$, where $D \in \cdisk$ is a rational rectangular union
and $U \subset D^\circ$ is an open rational rectangle.
\item Sets of the form $\sM_-(D,K)$, where $D \in \cdisk$ is a rational rectangular union
and $K \subset D$ is a finite union of closed rational rectangles.
\end{itemize}
\end{theorem}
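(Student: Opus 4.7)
The plan is to show that every subbasic open set defining the immersive topology on $\sM$ equals a union of sets from the proposed rational subbasis. Since the isomorphism classes of rational rectangular unions are countable (each is determined by a finite combinatorial attaching pattern together with rational coordinates in $\Q^2$), this yields second countability.

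The central input is the exhaustion theory from \S \ref{I-sect:rectangular unions} of \cite{HooperImmersions1}: every open translation disk $U$ is exhausted from inside by rational rectangular union open disks $U'$ with $U' \imm U$, and every closed translation disk $D$ admits an immersion $D \imm D'$ into some rational rectangular union closed disk $D'$. All four cases follow a common template. Given $S$ in the original subbasic set, I would work in the universal cover $\tilde S$: lift the given immersion via Proposition \ref{prop:lifted immersion} to an embedding $\tilde \iota$ whose image is a compact subset of $\tilde S$, approximate this image by a rational rectangular union inside $\tilde S$, and then project back to $S$ via $p_S$ to recover an immersion of the rational approximation into $S$.

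Concretely, for $\sM_{\imm}(D)$ I would thicken $\tilde \iota(\tilde D)$ to a rational rectangular union closed disk $D' \subset \tilde S$ containing $\tilde o_S$ in its interior; then $D \imm D' \imm S$ witnesses $S \in \sM_{\imm}(D') \subset \sM_{\imm}(D)$. For $\sM_{\not \imm}(U)$, uniqueness of immersions forces any compatible family of immersions $U' \imm S$ (for $U'$ ranging over an exhaustion $U' \imm U$) to glue to an immersion $U \imm S$; hence $U \not \imm S$ implies that some rational $U'$ in the exhaustion already fails to immerse, giving $S \in \sM_{\not \imm}(U') \subset \sM_{\not \imm}(U)$. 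For $\sM_+(D,U)$, choose a rational rectangular union open disk $U'$ with $\tilde o_S \in U' \subset \tilde \iota(\tilde U)$ and a rational rectangular union closed disk $D' \supset \tilde \iota(\tilde D)$ with $U' \subset (D')^\circ$. For $\sM_-(D,K)$, note that $\tilde \iota(\tilde K)$ is compact and disjoint from the discrete set $p_S^{-1}(o_S)$, so some rational rectangular union $K' \supset \tilde \iota(\tilde K)$ stays disjoint from $p_S^{-1}(o_S)$; after thickening further to obtain $D' \supset \tilde \iota(\tilde D) \cup K'$ we have $\iota'(K') = p_S(K')$ missing $o_S$, so $S \in \sM_-(D',K')$.

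The main obstacle is the case $\sM_-(D,K)$, where one must control not just $\tilde o_S$ but the entire discrete fiber $p_S^{-1}(o_S)$ when enlarging $K$ to a rational rectangular union. This forces the rectangular approximation results of \cite{HooperImmersions1} to be invoked in a form that respects a prescribed discrete obstruction set, and requires care to ensure that each resulting rational rectangular union remains a legitimate member of $\cdisk$ or $\disk$ with basepoint appropriately placed.
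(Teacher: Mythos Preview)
Your outline matches the paper's proof almost exactly: lift to $\tilde S$ via Proposition~\ref{prop:lifted immersion}, replace the lifted data by rational rectangular unions using the approximation results of \cite{HooperImmersions1}, and push back down by $p_S$. Two points deserve correction.

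In the $\sM_+(D,U)$ case you write $\tilde o_S \in U' \subset \tilde\iota(\tilde U)$, but the lift of $o_S$ lying in $\tilde\iota(U)$ need not be the basepoint $\tilde o_S$ of $\tilde S$; it is merely some $\tilde o \in p_S^{-1}(o_S)$. The paper chooses the rational rectangle $U_2$ about that point, and the argument goes through because the defining condition for $\sM_+$ only asks that $o_S$ lie in the \emph{projected} image.

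More importantly, in the $\sM_-(D,K)$ case you have over-estimated the obstacle. You do not need any refined approximation theorem that avoids an infinite discrete set. The paper's key observation is that $p_S^{-1}(o_S)\cap D_2$ is \emph{finite}: since $o_S$ has an embedded Euclidean $\epsilon$-ball neighborhood in $S$, distinct points of $p_S^{-1}(o_S)$ are at distance at least $\epsilon$ apart in $\tilde S$, so only finitely many fit in the compact set $D_2$. Once this is noted, covering the compact set $\tilde\iota(K_1)\subset D_2^\circ$ by finitely many closed rational rectangles that miss a given finite set of points is elementary, and their union is the desired $K_2$. No special form of the rectangular approximation is required.
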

\begin{proof}
The collection of all isomorphism classes of rational rectangular unions in planar surfaces is countable by 
Corollary \ref*{I-cor:countably many classes} of \cite{HooperImmersions1}. It follows that the subbasis
described in the theorem is countable. It remains to prove that the subbasis above generates the immersive
topology on $\sM$. For the proof, we will call the topology generated by the sets listed in the theorem
the {\em subbasis topology}. Clearly every open set in the subbasis topology is open in the immersive topology.
We will prove that every open set used to define the immersive topology on $\sM$
is open in the subbasis topology. 

First consider $\sM_{\imm}(D_1)$, where $D_1$ is an arbitrary closed disk. Suppose that
$S \in \sM_{\imm}(D_1)$ so that there is an immersion $\iota:D_1 \imm S$. By Proposition \ref{prop:lifted immersion},
this is equivalent to the existence of an immersion $\tilde \iota:D_1 \imm \tilde S$. We can then apply Theorem \ref*{I-thm:nested} of \cite{HooperImmersions1} to produce a rectangular union $D_2 \in \cdisk(\tilde S)$ so that
$\tilde \iota(D_1) \subset D_2^\circ$. By transitivity of immersions, $D_1$ immerses in a surface whenever $D_2$ does.
Also, $S \in \sM_{\imm}(D_2)$ since the covering map $p_S:\tilde S \to S$ restricts to an immersion $p_S|_{D_2}:D_2 \imm S$. Thus, $S \in \sM_{\imm}(D_2) \subset \sM_{\imm}(D_1)$, proving that the later is open in the subbasis topology.

Now suppose $S \in \sM_+(D_1,U_1)$, where $D_1$ is an arbitrary closed disk and $U_1 \subset D_1$ is an arbitrary open subset. We repeat the argument above to construct $D_2$. Observe that $\iota=p_S \circ \tilde \iota$. 
Thus, there is a point $\tilde o \in \tilde \iota(U_1)$ so that $p_S(\tilde o)$ is the basepoint $o_S$ of $S$. 
Since $\iota(U_1)$ is open, we can find an open rational rectangle $U_2$ satisfying $\tilde o \in U_2 \subset \tilde \iota(U_1)$.
Thus, $S \in \sM_{+}(D_2,U_2) \subset \sM_{+}(D_1,U_1)$, proving the later is open.

Now suppose that $S \in \sM_{-}(D_1,K_1)$, where $K_1 \subset D_1$ is an arbitrary closed set. 
Construct $D_2$ as above, and note that $o_S \not \in p_S \circ \tilde \iota(K_1)$. 
Equivalently, we have $\tilde \iota(K_1) \cap p_S^{-1}(o_S)=\emptyset$. 
The basepoint $o_S$ has an $\epsilon$ neighborhood which isometric to a Euclidean disk. Then, no image of a metric ball of radius less than $\frac{\epsilon}{2}$ in $\tilde S$
under $p_S$ can contain more than one point in $p_S^{-1}(o_S)$. Using compactness, we conclude that 
$p_S^{-1}(o_S) \cap D_2$ is finite. 
Since $\tilde \iota(K_1) \subset D_2^\circ$,
given any point $\tilde s \in \tilde \iota(K_1)$, we can find a closed rational rectangle $R$ which
does not intersect $p_S^{-1}(o_S)$ and whose interior contains $\tilde s$. The interiors of these rectangles
describe an open cover of the compact set $\tilde \iota(K_1)$, so there is a finite collection $\sR$ of such rectangles
which cover $\tilde \iota(K_1)$. Let $K_2$ be the union of these rectangles, which contains $\tilde \iota(K_1)$
and do not intersect $p_S^{-1}(o_S)$. Then, 
$S \in \sM_{-}(D_2,K_2) \subset \sM_{-}(D_1,K_1)$, which proves that $\sM_{-}(D_1,K_1)$ is open.

Finally, suppose that $S \in \sM_{\not \imm}(U_1)$, where $U_1$ is an open disk. Then $U_1 \not \imm S$,
or equivalently $U_1 \not \imm \tilde S$. 
By using a closed disk family for $U_1$, we can find a $U_2 \subset U_1$
with compact closure so that $U_2 \not \imm S$. (See Propositions \ref*{I-prop:embedding} and \ref*{I-prop:closed disk family}
of \cite{HooperImmersions1}.) Then, we can find an open rational rectangular union $U_3 \subset U_1$ 
which is homeomorphic to a disk so that
$U_2 \subset U_3$ using Theorem \ref*{I-thm:nested} of \cite{HooperImmersions1}. We have $U_3 \not \imm S$,
since such an immersion would restrict to an immersion of $U_2$ into $S$. Similarly,
whenever $U_1$ immerses into a surface, so does $U_3$. Thus,
$S \in \sM_{\not \imm}(U_3) \subset \sM_{\not \imm}(U_1)$. This proves that $\sM_{\not \imm}(U_1)$ is open.
\end{proof}

\begin{theorem}[Second countability of $\sE$]
A countable subbase for the immersive topology on $\sE$ is given by preimages under $\pi:\sE \to \sM$
of a countable subbase for the topology on $\sM$ together with sets of the form 
$\sE_+(D,U)$, where $D \in \cdisk$ is a rational rectangular union and $U \subset D^\circ$ is an
open rational rectangle.
\end{theorem}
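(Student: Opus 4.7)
The plan is to verify two things: (i) the proposed subbase is countable, and (ii) it generates the immersive topology on $\sE$. For countability, Corollary \ref*{I-cor:countably many classes} of \cite{HooperImmersions1} gives countably many isomorphism classes of rational rectangular unions $D$, and for each such $D$ the open rational rectangles $U \subset D^\circ$ form a countable family, so the collection of $\sE_+(D,U)$ in our subbase is countable. Combined with the countable subbase for $\sM$ provided by the preceding theorem, the total family is countable. One direction of the topological equivalence is immediate: preimages of open sets in $\sM$ are open in $\sE$ by continuity of $\pi$, and sets $\sE_+(D,U)$ are open in $\sE$ by definition.

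For the reverse inclusion, it suffices to show that each subbasic open set used to define the immersive topology on $\sE$ is open in the topology generated by our proposed subbase. Preimages of arbitrary open sets in $\sM$ are handled immediately: any open set in $\sM$ is a union of finite intersections of $\sM$-subbasic sets, and $\pi^{-1}$ commutes with unions and intersections. The substantive work is to show that $\sE_+(D_1, U_1)$ is open in the subbase topology when $D_1$ is an arbitrary closed disk and $U_1 \subset D_1^\circ$ is an arbitrary open set. Given $(S, s) \in \sE_+(D_1, U_1)$, fix an immersion $\iota: D_1 \imm S$ with $s \in \iota(U_1)$; by Proposition \ref{prop:lifted immersion}, there is a lifted immersion $\tilde\iota: D_1 \to \tilde S$ with $p_S \circ \tilde\iota = \iota$. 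Applying Theorem \ref*{I-thm:nested} of \cite{HooperImmersions1} produces a closed rational rectangular union $D_2 \in \cdisk(\tilde S)$ with $\tilde\iota(D_1) \subset D_2^\circ$. Pick $u \in U_1$ with $p_S(\tilde\iota(u)) = s$, and set $\tilde s = \tilde\iota(u)$. Since $\tilde\iota$ is a local homeomorphism, $\tilde\iota(U_1)$ is open in $\tilde S$ and lies in $D_2^\circ$, so we may choose an open rational rectangle $U_2$ with $\tilde s \in U_2 \subset \tilde\iota(U_1)$.

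To conclude, I must show $(S,s) \in \sE_+(D_2, U_2) \subset \sE_+(D_1, U_1)$. The first containment is built into the construction: $p_S|_{D_2} : D_2 \imm S$ is an immersion, and $s = p_S(\tilde s) \in p_S(U_2)$. For the second, given $(T, t) \in \sE_+(D_2, U_2)$ with immersion $\iota' : D_2 \imm T$ and $t \in \iota'(U_2)$, regard $\tilde\iota$ as an immersion $D_1 \imm D_2$ (its image lies in $D_2^\circ \subset D_2$); composing yields an immersion $\iota' \circ \tilde\iota : D_1 \imm T$, and since $U_2 \subset \tilde\iota(U_1)$, we have $t \in \iota'(U_2) \subset (\iota' \circ \tilde\iota)(U_1)$, placing $(T,t) \in \sE_+(D_1, U_1)$. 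The main obstacle is bookkeeping: ensuring $D_2$ genuinely qualifies as a closed disk in $\cdisk(\tilde S)$ (containing $\tilde o_S$ in its interior, which follows because $\tilde\iota$ preserves basepoints and $\tilde\iota(D_1) \subset D_2^\circ$) and verifying the nested chain $\tilde s \in U_2 \subset \tilde\iota(U_1) \subset D_2^\circ$ so that $\sE_+(D_2, U_2)$ is a legitimate member of our proposed subbase.
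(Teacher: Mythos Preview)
Your proof is correct and follows essentially the same approach as the paper, which explicitly defers to the argument for $\sM_+(D_1,U_1)$ in the preceding theorem. You have filled in the details (lifting $\iota$ to $\tilde\iota$, choosing $D_2$ via Theorem~\ref*{I-thm:nested}, selecting a rational rectangle $U_2$ around the lift of $s$, and verifying the chain $(S,s)\in\sE_+(D_2,U_2)\subset\sE_+(D_1,U_1)$) exactly as the paper intends, and your bookkeeping remarks about basepoints and the containment $\tilde\iota(D_1)\subset D_2^\circ$ are apt.
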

We omit the proof, because it very similar to the prior result. In particular,
the proof that $\sE_+(D_1,U_1)$ is open in the topology generated by this subbasis is identical
to the argument that $\sM_+(D_1,U_1)$ was open in the prior proof.

\section{Embeddings}
\name{sect:embeddings}
In this section, we prove the Embedding Theorem (Theorem \ref{thm:embedding}), which stated that sets of the form
$\sM_{\emb}(D)$ are open whenever $D$ is a closed disk in a translation surface. We also show that Proposition \ref{prop:E-} (that $\sE_-(D,K)$ is open in $\sE$) follows.

\subsection{Ball embeddings}
Let $B_\epsilon \subset \R^2$, be the open ball of radius $\epsilon$ centered at the origin, with the origin considered as the basepoint. Given a point $s$ in a translation surface $S$, we define the {\em embedding radius}
to be
$$\ER(S,s)=\max \{\epsilon>0~:~B_\epsilon \emb S^s\}.$$
Here, $S^s$ denotes the translation surface which is translation isomorphic to $S$, but has the basepoint in the location of $s \in S$. See \S \ref{sect:results}. This maximum is well defined unless $S$ is the plane, in which case we take $\ER(S,s)=\infty$. 

Suppose $\epsilon<\ER(S,s)$. Then, there is an embedding $e:B_\epsilon \emb S^s$. Consider the basepoint changing isomorphism
$\beta_s:S \to S^s$, which is a translation isomorphism and sends $s \in S$ to the basepoint of $S^s$. We define the {\em ball embedding} 
$$\BE_s:B_\epsilon \to S; \quad \v \mapsto \beta_s^{-1} \circ e(\v).$$

As in \cite[\S \ref*{I-sect:Embedding radii}]{HooperImmersions1},
we also define some related quantities. If $U \subset S$ is open, then it is translation isomorphic to a translation surface $\hat U$, and there is a natural embedding $e:\hat U \to U$. For $s \in U$, we define
$$\ER(s \in U)=\ER\big(\hat U,e^{-1}(s)\big).$$
If $K \subset U$ is compact, then we define
$$\ER(K \subset U)=\min_{s \in K} \ER(s \in U).$$
This minimum is well defined, because the function $s \mapsto \ER(s \in U)$ is $1$-Lipschitz.

\subsection{Proof of the Embedding Theorem}
We now prove the Embedding Theorem. We will briefly describe the main idea of the proof.
Pick a closed disk $D$.
The basic plan of the proof is to show that the Embedding Theorem is satisfied
locally. By this we mean for every pair of points $u$ and $v$ in $D$, there are respective neighborhoods
$U$ and $V$ and an open subset $\sU \subset \sM$
so that if $S \in \sI$, then there is an immersion $\iota: D \imm S$ and this immersion looks like an embedding
if we restrict attention to both $U$ and $V$. This statements is made rigorous by the two lemmas below.
The proof concludes by making an appeal to compactness of $D \times D$.

\begin{proposition}
\name{prop:13}
Let $D$ be a closed disk in a planar surface $P$.
Let $\epsilon>0$ be such that there is an embedding $e:\bar B_\epsilon \imm D$.
Then, the following set is open in $\sM$:
$$\big\{S \in \sM~:~\text{$\exists \iota:D \imm S$ and $(\iota \circ e)^{-1}(o_S) \cap \bar B_\epsilon=\{o_P\}$}\big\}.$$
\end{proposition}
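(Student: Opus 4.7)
Let $\sN$ denote the set in the statement, fix an arbitrary $S_0 \in \sN$, and let $\iota_0 : D \imm S_0$ be the unique immersion. The hypothesis says $(\iota_0 \circ e)^{-1}(o_{S_0}) \cap \bar B_\epsilon = \{o_P\}$, and I want to produce a basic open neighborhood of $S_0$ contained in $\sN$.

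The strategy is to split $\bar B_\epsilon$ into a small inner closed ball $\bar B_\delta$ and the closed outer annulus $\bar B_\epsilon \setminus B_\delta$. First choose $\delta \in (0,\epsilon)$ small enough that $\iota_0 \circ e|_{\bar B_\delta}$ is an embedding; this is possible because $\iota_0 \circ e$ is a translation-preserving local homeomorphism with $\iota_0 \circ e(o_P) = o_{S_0}$, so by uniqueness of immersions its restriction to a sufficiently small closed ball coincides with the ball embedding $\BE_{o_{S_0}}$, which is injective (concretely, take $\delta < \ER(S_0, o_{S_0})$, with the convention that this radius is $+\infty$ when $S_0$ is the plane).

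The candidate open neighborhood is
$$\sU \;=\; \sM_{-}\bigl(D,\, e(\bar B_\epsilon \setminus B_\delta)\bigr) \;\cap\; \sV,$$
where $\sV \ni S_0$ is an open set on which $\iota \circ e|_{\bar B_\delta}$ is an embedding. The first factor is a subbasis element of the immersive topology and ensures $o_S \notin \iota\bigl(e(\bar B_\epsilon \setminus B_\delta)\bigr)$. For $S \in \sV$, uniqueness of immersions forces $\iota \circ e|_{\bar B_\delta}$ to coincide with the unique immersion $\bar B_\delta \to S$, which is an embedding by definition of $\sV$; hence $o_P$ is the only preimage of $o_S$ in $\bar B_\delta$. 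The two conditions combine to give $(\iota \circ e)^{-1}(o_S) \cap \bar B_\epsilon = \{o_P\}$, so $S \in \sN$, and $S_0 \in \sU$ by construction.

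The main obstacle is constructing $\sV$ without circularity: invoking the Embedding Theorem (Theorem~\ref{thm:embedding}) would give openness of $\sM_{\emb}(\bar B_\delta)$ directly, but this proposition is a lemma toward that theorem. My plan is to assemble $\sV$ from finitely many applications of the Disjointness Theorem (Theorem~\ref{thm:disjointness}), proved in parallel: cover $\bar B_\delta$ by small closed rational rectangles $\bar R_1, \ldots, \bar R_n$ chosen so that $\iota_0 \circ e$ is injective on each $\bar R_i$ and so that $\iota_0 \circ e(\bar R_i) \cap \iota_0 \circ e(\bar R_j) = \emptyset$ whenever $\bar R_i \cap \bar R_j = \emptyset$. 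Applying Disjointness to each closure-disjoint pair produces open sets whose intersection handles collisions between ``distant'' points; injectivity within each individual $\bar R_i$ for nearby $S$ comes from the corresponding openness results on the disk moduli space $\tM$ in \cite{HooperImmersions1}, pulled back via the continuous universal-cover map $\sM \to \tM$ (Theorem~\ref{thm:universal cover}).
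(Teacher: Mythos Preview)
Your construction of $\sV$ has a genuine gap. You want $\iota \circ e|_{\bar B_\delta}$ to be injective for all $S$ in some open neighborhood of $S_0$, but injectivity of this map is \emph{not} a property of the universal cover $\tilde S$ alone: even if the lifted immersion $\tilde\iota : D \imm \tilde S$ is injective on $e(\bar B_\delta)$, the covering map $p_S$ can still identify distinct points of $\tilde\iota\bigl(e(\bar B_\delta)\bigr)$. So nothing you pull back from $\tM$ via the map $S \mapsto \tilde S$ controls this. What you actually need---that the set $\{S : \iota|_{e(\bar B_\delta)} \text{ is injective}\}$ is open---is precisely the content of Lemma~\ref{lem:2} (equivalently, the Embedding Theorem for the small ball), and Lemma~\ref{lem:2} is proved \emph{using} this proposition. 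The Disjointness Theorem, whose proof via Lemma~\ref{lem:1} is indeed logically independent of Proposition~\ref{prop:13}, handles pairs of \emph{disjoint} closed sets; it cannot by itself give injectivity on a single connected piece, and that is exactly the local step you are missing.

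The paper's proof sidesteps all of this with a single observation: the set in the statement \emph{equals} the subbasis element $\sM_-\bigl(D,\, e(\bar B_\epsilon \smallsetminus B_{\epsilon/2})\bigr)$. The nontrivial inclusion is that if $\iota \circ e(\v) = o_S$ for some nonzero $\v \in \bar B_\epsilon$, then the path $t \mapsto \iota \circ e(t\v)$ for $t \in [0,1]$ is a closed geodesic in $S$, and hence $\iota \circ e(n\v) = o_S$ for every integer $n$ with $|n\v| \leq \epsilon$. If $|\v| < \epsilon/2$, choosing $n$ with $\epsilon/2 \leq n|\v| < \epsilon$ produces a preimage of $o_S$ in the annulus $\bar B_\epsilon \smallsetminus B_{\epsilon/2}$. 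Thus excluding preimages of $o_S$ from the annulus already forces the origin to be the only preimage in all of $\bar B_\epsilon$, and no local-embedding argument is needed at all.
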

\begin{proof}
Choose $\epsilon \leq \ER(o_P \in D^\circ)$.
Then, there is an embedding $e:\bar B_\epsilon \emb D$. Let $\sX \subset \sM$ be the set defined
at the end of the proposition. Define 
$$\sY=\sM_{-}(D, \bar B_\epsilon \smallsetminus B_{\frac{\epsilon}{2}}).$$
We claim that $\sX = \sY$. This will prove the theorem, since $\sY$ is a subbasis element of the topology on $\sM$.

Observe that $\sX \subset \sY$, since $S \in \sX$ implies that there is an immersion $\iota:D \imm S$ and 
whenever $p \in P$ is in $e(\bar B_\epsilon)\smallsetminus \{o_P\}$, $\iota(p) \neq o_S$. This in particular
holds for points $p \in \bar B_\epsilon \smallsetminus B_{\frac{\epsilon}{2}}.$

Now suppose that $S \in \sY$ and $S \not \in \sX$. Then there is an immersion $\iota:D \imm S$,
and there is a $\v \in \bar B_\epsilon \smallsetminus \{\0\}$ so that $\iota \circ e(\v)=o_S$. 
Since $S \in \sY$, it must be that $\v \in B_{\frac{\epsilon}{2}} \smallsetminus \{\0\}$.
The collection $\{\iota \circ e(t \v)~:~0 \leq t \leq 1\}$ is a closed geodesic on $S$.
It follows that so long as $n \in \Z$ and $|n \v| \leq \epsilon$, we have
$\iota \circ e(n \v)=o_S$. Choose $n$ to be the integer satisfying 
$$\frac{\epsilon}{2} \leq n|\v|<\frac{\epsilon}{2}+|\v|<\epsilon.$$
Then, $n\v \in \bar B_\epsilon \smallsetminus B_{\frac{\epsilon}{2}}$ and
$\iota \circ e(n \v)=o_S$. By definition $S \not \in \sY$, which is a contradiction.
\end{proof}

\begin{lemma}
\name{lem:2}
Let $P$ be a planar surface, let $D_1, D_3 \in \cdisk(P)$ with $D_1 \subset D_3^\circ$.
Let $u \in D_1$. Then,
there is a closed neighborhood $\bar U \subset D_3^\circ$ of $u$ so that the following set is open:
$$\{S \in \sM~:~ \text{$\exists \iota:D_3 \imm S$ and $\iota|_{\bar U}$ is injective}\}.$$
\end{lemma}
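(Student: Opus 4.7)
The plan is to choose $\bar U$ to be a small Euclidean ball around $u$ and to rewrite the injectivity condition as a preimage-of-an-open-set under a basepoint-change map, reducing the lemma to Proposition \ref{prop:13} combined with the continuity results for immersions and basepoint changes.

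Concretely, I would fix $\epsilon > 0$ small enough that the closed metric ball $\bar B_{3\epsilon}(u) \subset P$ is isometric, via an embedding $e_u : \bar B_{3\epsilon} \emb P$ with $e_u(\0) = u$, to the Euclidean closed ball of radius $3\epsilon$ in $\R^2$, and so that $\bar B_{3\epsilon}(u) \subset D_3^\circ$; such an $\epsilon$ exists since $u \in D_1 \subset D_3^\circ$. Set $\bar U := e_u(\bar B_\epsilon)$. The key equivalence to verify is
\[
\iota|_{\bar U} \text{ is injective} \ \Longleftrightarrow \ \iota^{-1}(\iota(u)) \cap \bar B_{2\epsilon}(u) = \{u\}
\]
for every immersion $\iota : D_3 \imm S$. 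Both directions rest on a rigidity property implicit in Propositions \ref{prop:lift}--\ref{prop:lifted immersion}: if $\iota(p) = \iota(q)$ with $\v = \dev(q) - \dev(p) \neq \0$, then analytic continuation of the unique deck transformation of $\tilde S \to S$ sending $\tilde\iota(p)$ to $\tilde\iota(q)$ forces $\iota(x) = \iota(x + \v)$ throughout the connected component of $D_3 \cap (D_3 - \v)$ containing $p$. The $3\epsilon$-buffer is chosen precisely so that, whenever $|\v| \leq 2\epsilon$, this component contains the convex lens $\bar B_{3\epsilon}(u) \cap \bar B_{3\epsilon}(u - \v)$, and in particular both $u$ itself and the folding midpoint $u - \v/2$. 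Consequently a folding of $\iota|_{\bar U}$ with holonomy $\v$ yields $\iota(u) = \iota(u + \v)$ with $u + \v \in \bar B_{2\epsilon}(u) \setminus \{u\}$, and conversely an extra preimage $u + \v \in \bar B_{2\epsilon}(u) \setminus \{u\}$ of $\iota(u)$ yields $\iota(u - \v/2) = \iota(u + \v/2)$, a folding inside $\bar U$.

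Given this characterization, openness of $\sY$ is immediate. Regarding $P$ with basepoint $u$ as the planar surface $P^u$, Proposition \ref{prop:13} applied in $P^u$ with the disk $D_3$ and the embedding $e_u|_{\bar B_{2\epsilon}} : \bar B_{2\epsilon} \emb D_3$ (which exists because $\bar B_{2\epsilon}(u) \subset D_3$) produces an open set
\[
\sX = \{T \in \sM : \exists \iota' : D_3 \imm T, \ (\iota' \circ e_u)^{-1}(o_T) \cap \bar B_{2\epsilon} = \{\0\}\}.
\]
Joint continuity of immersions (Proposition \ref{prop:continuity immersions}), applied with $U = D_3^\circ$ and the fixed point $u \in D_3^\circ$, makes the map $S \mapsto \iota_S(u) \in \sE$ continuous on $\sM_\imm(D_3) \subseteq \sI(D_3^\circ)$. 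Composing with the continuous basepoint-changing map $\BC : \sE \to \sM$ (Theorem \ref{thm:basepoint changing map}) gives a continuous map $\phi : \sM_\imm(D_3) \to \sM$, $\phi(S) = S^{\iota_S(u)}$. By the rigidity characterization above, the set in the lemma coincides with $\phi^{-1}(\sX)$, which is open in the open set $\sM_\imm(D_3)$, hence open in $\sM$.

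The main obstacle is the rigidity step: one must verify that the analytic continuation of the deck transformation truly propagates from the original witness $p$ to both $u$ and $u - \v/2$ without leaving the single rigidity component of $D_3 \cap (D_3 - \v)$, for every possible holonomy $\v$ with $|\v| \leq 2\epsilon$. The $3\epsilon$-buffer is what makes this routine: the straight-line segments between $p$, $u$, and $u - \v/2$, together with their $\v$-translates, all lie within the convex lens $\bar B_{3\epsilon}(u) \cap \bar B_{3\epsilon}(u - \v) \subset D_3 \cap (D_3 - \v)$, so connectedness of the rigidity component containing all these points is automatic.
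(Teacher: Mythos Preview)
Your argument is correct, but it takes a genuinely different route from the paper's.  The paper never moves the basepoint: it shows directly that the set in the lemma coincides with the Proposition~\ref{prop:13} set
\[
\sY=\big\{S\in\sM:\ \exists\,\iota:D_3\imm S\ \text{and}\ (\iota\circ e)^{-1}(o_S)\cap\bar B_{2\epsilon}=\{o_P\}\big\},
\]
where $e:\bar B_{2\epsilon}\emb D_3$ is the ball embedding at the \emph{original} basepoint $o_P$.  The transfer from ``$\iota|_{\bar U}$ fails to be injective at $u$'' to ``$o_S$ has an extra preimage near $o_P$'' is accomplished by the single global translation map $f:D_1\to D_3$, $p\mapsto \BE_p(\v)$, which is well defined because $\epsilon$ is chosen below $\frac{1}{2}\ER(D_1\subset D_3^\circ)$; uniqueness of immersions then forces $\iota\circ f=\iota$ on all of $D_1$.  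Thus the paper gets by with nothing beyond Proposition~\ref{prop:13} and uniqueness of immersions, and it genuinely uses the intermediate disk $D_1$.

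Your version instead applies Proposition~\ref{prop:13} in $P^u$ and pulls back along $\phi(S)=S^{\iota_S(u)}$, invoking Proposition~\ref{prop:continuity immersions} and Theorem~\ref{thm:basepoint changing map}.  This is cleaner in one respect---you never need $D_1$, only $u\in D_3^\circ$ with a $3\epsilon$ Euclidean buffer---and your local rigidity argument (deck transformation plus connectedness of the lens $\bar B_{3\epsilon}(u)\cap\bar B_{3\epsilon}(u-\v)$) is equivalent to the paper's $f$-map computation.  The cost is a dependency on results that the paper proves only in \S\ref{sect:disk} and \S\ref{sect:basepoint}, i.e.\ \emph{after} Lemma~\ref{lem:2}.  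As it happens those later proofs do not use the Embedding Theorem or this lemma, so there is no circularity; but you should flag that you are reordering the development, since within the paper's own flow those tools are not yet available at this point.
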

\begin{proof}
Define the constant
$$\epsilon=\frac{1}{2}  \min \big\{\ER(D_1 \subset D_3^\circ), \ER(o_P \in D_1) \big\}.$$ 
Then, there is an embedding $e:\bar B_{2 \epsilon} \emb D_3$ be the associated embedding.
Let $\bar U \subset D_3$ be the closed $\epsilon$-ball centered at $u \in D_1$. Let $\sX \subset \sM$ be the set
defined at the end of the lemma. We must show that $\sX$ is open. In fact, we claim that $\sX=\sY$, where
$$\sY=\big\{S \in \sM~:~\text{$\exists \iota:D_3 \imm S$ and $(\iota \circ e)^{-1}(o_S) \cap \bar B_{2 \epsilon}=\{o_P\}$}\big\}.$$
This set is open by Proposition \ref{prop:13}.

We will prove that $\sX \subset \sY$. 
Suppose that $S \in \sM$ and there is an immersion $\iota:D_3 \imm S$, but that $S \not \in \sY$. We will show that $S \not \in \sX$.
Since $S \not \in \sY$, there is an $\v \in B_{2 \epsilon}$
with $\v \neq \0$ so that $\iota \circ e(\v)=o_S$. Consider the map
\begin{equation}
\name{eq:f}
f:D_1 \to D_3; \quad p \mapsto \BE_p(\v).
\end{equation}
Observe that the map $\iota \circ f:D_1 \imm S$ is an immersion since
$f(o_P)=e(\v)$. So by uniqueness of immersions $\iota \circ f=\iota$ on $D_1$. We claim that $\iota|_{\bar U}$ is not injective. To see this observe that
$$
\iota \big(\BE_u(\frac{-\v}{2})\big)=
\iota \circ f
\big(\BE_u(\frac{-\v}{2})\big)=
\iota\big(\BE_u(\frac{\v}{2})\big).$$
So, the observation that both $\BE_u(\frac{-\v}{2})$ and $\BE_u(\frac{-\v}{2})$ are both within $\bar U$ 
proves that $\iota|_{\bar U}$ is not injective.

We now will show that $\sY \subset \sX$ via a similar argument.
Let $S$ be a translation surface so that there is an immersion
$\iota:D_3 \imm S$. 
Suppose there are $p,q \in \bar U$ so that $\iota(p)=\iota(q)$.
Then, $S \not \in \sX$ and we will show $S \not \in \sY$.
Let $s \in S$ be this common image, $s=\iota(p)$.
Consider the vector $\v=e^{-1}(q) - e^{-1}(p)$, which satisfies $\v \neq 0$ and 
$|\v|<2 \epsilon$. Define $f$ as in equation \ref{eq:f}. 
We claim that $\iota \circ f$ is an immersion of $D_1$ into 
$S$. Since it is a local translation, it suffices to show that $\iota \circ f(o_P)=o_S$. 
Consider the basepoint changing isomorphism $\beta_p:P \to P^p$ and
$\beta_s:S \to S^s$. 
Let $D_1^p=\beta_p(D_1)$ and $D_3^p=\beta_p(D_3)$. 
Then, the composition
$\beta_s \circ \iota \circ \beta_p^{-1}$ is an immersion of $D_3^p$
into $S^s$. Consider precomposing with $\beta_p \circ f \circ \beta_p^{-1}:D_1^p \to D_3^p$. The composition is given by 
$\beta_s \circ \iota \circ f \circ \beta_p^{-1}$ and
is an immersion of $D_1^p$ into $S^s$. By uniqueness of immersions,
these maps agree on $D_1^p$. Thus, we have
$\iota \circ f=\iota$ on $D_1$. We conclude that
$$o_S=\iota(o_P)=\iota \circ f(o_P).$$
Since $f(o_P) \in e(B_{2 \epsilon})$, we have that $S \not \in \sX$.
\end{proof}

\begin{lemma}
\name{lem:1}
Let $P$ be a planar surface, let $D_1, D_3 \in \cdisk(P)$ with $D_1 \subset D_3^\circ$.
Let $u$ and $v$ be distinct points in $D_1$. Let $\bar U_\epsilon$ and $\bar V_\epsilon$
be the closed $\epsilon$ balls about $u$ and $v$, respectively. 
Then, for sufficiently small
$\epsilon$, the following set is open in $\sM$:
$$\{S \in \sM~:~ \text{$\exists \iota:D_3 \imm S$ and $\iota(\bar U) \cap \iota(\bar V) = \emptyset$}\}.$$
\end{lemma}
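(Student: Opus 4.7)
The plan is to realize $\sX$ (the set appearing in the lemma) as $F^{-1}(\sY)$ for a sub-basis open set $\sY$ and a continuous map $F$, in analogy with the reduction carried out in Lemma~\ref{lem:2}. Fix $\v_0 = \dev_P(v) - \dev_P(u)$ (well-defined since $P$ is simply connected) and let $Q = P^u$ denote $P$ with basepoint changed to $u$, so that $v$ develops to $\v_0$ in $Q$. Choose $\epsilon$ small enough that $\bar U_\epsilon$, $\bar V_\epsilon$, and the enlarged closed ball $\bar V_{2\epsilon}$ all lie in $D_3^\circ$, that $\bar U_\epsilon \cap \bar V_\epsilon = \emptyset$ and $u \notin \bar V_{2\epsilon}$, and that each of these balls develops homeomorphically to a Euclidean ball. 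Since $u \in D_1 \subset D_3^\circ$, we may view $D_3$ as a closed disk in $Q$; $\bar V_{2\epsilon}$ is then a closed subset of $D_3$ not containing the basepoint. Let $\sY = \sM_-(D_3, \bar V_{2\epsilon})$ with $D_3$ regarded this way; this is a sub-basis element of the immersive topology and hence open.

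Define $F:\sM_{\imm}(D_3) \to \sM$ by $F(S) = S^{\iota_S(u)}$, where $\iota_S:D_3 \imm S$ is the unique immersion. This is continuous: Proposition~\ref{prop:continuity immersions} applied to the open disk $D_3^\circ$ gives continuity of $S \mapsto \iota_S(u) \in \sE$, and $\BC$ is continuous by Theorem~\ref{thm:basepoint changing map}. Unwinding the definition of $\sY$ via uniqueness of immersions, $F(S) \in \sY$ is equivalent to $\iota_S(u) \notin \iota_S(\bar V_{2\epsilon})$. Thus establishing $\sX = F^{-1}(\sY)$ reduces to the equivalence
$$\iota(\bar U_\epsilon) \cap \iota(\bar V_\epsilon) = \emptyset \iff \iota(u) \notin \iota(\bar V_{2\epsilon}),$$
which I claim holds for sufficiently small $\epsilon$.

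To prove the equivalence, I pass to the lifted immersion $\tilde\iota:D_3 \to \tilde S$ (Proposition~\ref{prop:lifted immersion}) and analyze deck translations. For the forward implication (by contrapositive), if $\iota(u') = \iota(v')$ for some $u' \in \bar U_\epsilon$ and $v' \in \bar V_\epsilon$, then there is a deck translation $\tau$ of $p_S:\tilde S \to S$ with $\tau(\tilde\iota(u')) = \tilde\iota(v')$, and its translation vector $\mathbf{t} = \dev_P(v') - \dev_P(u')$ satisfies $|\mathbf{t} - \v_0| \leq 2\epsilon$. The subsets $\tau(\tilde\iota(\bar U_\epsilon))$ and $\tilde\iota(\bar V_{2\epsilon})$ are flat Euclidean disks in $\tilde S$ sharing the point $\tilde\iota(v')$, and their developed images overlap in a region containing $\dev_P(u) + \mathbf{t}$. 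By flatness the patches agree over this overlap, so $\tau(\tilde\iota(u)) = \tilde\iota(v'')$ for the unique $v'' \in \bar V_{2\epsilon}$ with $\dev_P(v'') = \dev_P(u) + \mathbf{t}$, giving $\iota(u) = \iota(v'') \in \iota(\bar V_{2\epsilon})$. The reverse implication is symmetric: given $\iota(u) = \iota(v'')$ with $v'' \in \bar V_{2\epsilon}$ and $\mathbf{w} = \dev_P(v'') - \dev_P(v)$, one picks $u' \in \bar U_\epsilon$ with $\dev_P(u') = \dev_P(u) - \mathbf{w}/2$ and $v' \in \bar V_\epsilon$ with $\dev_P(v') = \dev_P(v) + \mathbf{w}/2$, and the same branch argument delivers $\iota(u') = \iota(v')$.

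The main obstacle is the branch agreement in $\tilde S$: two flat patches there can have overlapping developed images without intersecting as subsets of $\tilde S$, so I must verify coincidence on the overlap of the developed images rather than just agreement in $\R^2$. This is handled by the smallness of $\epsilon$: $\tilde\iota(\bar V_{2\epsilon})$ and $\tau(\tilde\iota(\bar U_\epsilon))$ are isometric to Euclidean disks embedded in $\tilde S$, and their shared point $\tilde\iota(v')$ (or $\tilde\iota(v'')$ in the reverse case) lets the local injectivity of $\dev_{\tilde S}$ on each patch propagate equality throughout.
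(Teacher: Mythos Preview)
Your argument is correct and takes a genuinely different route from the paper.

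The paper avoids Proposition~\ref{prop:continuity immersions} and Theorem~\ref{thm:basepoint changing map}, which are only proved later in \S\ref{sect:disk} and \S\ref{sect:basepoint}. Instead it expresses $\sX$ directly as a Boolean combination of subbasis sets: using the Fusion Theorem from \cite{HooperImmersions1} it builds a planar surface $Q=\beta_u(D_3^\circ) \fuse \beta_v(D_2^\circ)$ encoding the two re-basings simultaneously, picks a point $q \in Q$, a ball $W \subset Q$, and a closed disk $K$, and then shows
\[
\sX \;=\; \sM_{\imm}(D_3)\,\cap\,\big(\sM_{\not \imm}(Q^q)\,\cup\,\sM_-(K^q,W^q)\big).
\]
Your approach instead pulls back the single subbasis element $\sM_-(D_3,\bar V_{2\epsilon})$ (based at $u$) through the continuous map $F(S)=S^{\iota_S(u)}$, with the geometric content compressed into the equivalence $\iota(\bar U_\epsilon)\cap\iota(\bar V_\epsilon)=\emptyset \Leftrightarrow \iota(u)\notin\iota(\bar V_{2\epsilon})$. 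This is conceptually cleaner and bypasses the fusion machinery entirely. The cost is the forward reference: you rely on results proved later. There is no circularity---the proofs of Proposition~\ref{prop:continuity immersions} and Theorem~\ref{thm:basepoint changing map} depend only on the Projection Theorem, Theorem~\ref{thm:universal cover}, and disk-level results from \cite{HooperImmersions1}, none of which use \S\ref{sect:embeddings}---but the paper would need to be reorganized to accommodate your order.

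One step to tighten: the ``branch agreement''. Your diagnosis is right---overlap of developed images does not force overlap in $\tilde S$---and your resolution works, but should be phrased via unique path-lifting. Since $\dev_{\tilde S}$ is a local homeomorphism and $\tilde S$ is Hausdorff, two lifts of a path in $\R^2$ that agree at one parameter agree everywhere (an open-and-closed argument). The two developed disks have convex intersection, so any point in it is joined to the shared developed point by a segment inside the intersection; the lifts of that segment into the two patches then coincide, giving $\tau(\tilde\iota(u))=\tilde\iota(v'')$ (and analogously in the reverse direction). The paper sidesteps this entirely by building the identification into the fusion surface $Q$.
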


In order to prove the theorem, we will make use of the 
Fusion Theorem \cite[Theorem \ref*{I-thm:main fusion}]{HooperImmersions1}. We state a variant of this result combining 
this theorem with Proposition \ref*{I-prop:fusion disks} of \cite{HooperImmersions1}.
\begin{theorem}[Fusion Theorem]
Let $P$ and $Q$ be planar surfaces. Then, there is a unique planar surface
$R=P \fuse Q$ which satisfies the following statements:
\begin{itemize}
\item $P \imm R$ and $Q \imm R$.
\item For all trivial surfaces $S$, if $P \imm S$ and $Q \imm S$, then $R \imm S$.
\end{itemize}
\end{theorem}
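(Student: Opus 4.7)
The plan is to establish uniqueness and existence separately. Uniqueness follows directly from the universal property: if $R_1$ and $R_2$ are two planar surfaces both satisfying the stated conditions, then applying the universal clause of $R_1$ with $S = R_2$ gives $R_1 \imm R_2$, and symmetrically $R_2 \imm R_1$. The earlier corollary on isomorphic subsets (applied to $R_1, R_2 \in \PC$) then forces these immersions to be mutually inverse embeddings, so $R_1$ and $R_2$ are translation isomorphic.

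For existence, I would construct $R$ as a quotient of the disjoint union $P \sqcup Q$. Let $\sim$ be the smallest equivalence relation on $P \sqcup Q$ such that $o_P \sim o_Q$ and such that whenever $p \in P$ and $q \in Q$ satisfy $p \sim q$ and lie in charts $(U_p,\phi_p)$ and $(U_q,\phi_q)$ with $\phi_p(p) = \phi_q(q)$, one has $p' \sim q'$ for every pair with $\phi_p(p') = \phi_q(q')$. Set $R = (P \sqcup Q)/\sim$ with the quotient topology, the class $[o_P] = [o_Q]$ as basepoint, and the translation atlas pushed forward through the quotient map. The very definition of $\sim$ ensures that these pushed-forward charts are compatible and that the natural maps $P \to R$ and $Q \to R$ are basepoint-preserving local translation homeomorphisms, hence immersions witnessing $P \imm R$ and $Q \imm R$.

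The main obstacle is verifying that $R$ is a planar surface, i.e., a Hausdorff, simply connected, oriented $2$-manifold homeomorphic to a disk. Being a $2$-manifold and oriented is local and inherited from $P$ and $Q$. Simple connectivity follows because any loop based at $[o_P]$ lifts to a concatenation of arcs alternately lying in the images of $P$ and $Q$, each of which contracts within its own simply connected source while keeping the basepoint anchored. Hausdorffness is the crux: if two classes $[p] \neq [q]$ failed to be separated in $R$, one could track the failure back through the developing maps of $P$ and $Q$ and apply analytic continuation from the common basepoint image to conclude that $p$ and $q$ were already equivalent under $\sim$, a contradiction. Showing that $R$ is then homeomorphic to a disk (rather than some larger simply connected planar surface with strange boundary behavior) uses the characterization of planar surfaces via their developing maps.

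Finally, for the universal property, suppose immersions $\iota_P : P \imm S$ and $\iota_Q : Q \imm S$ into a trivial surface $S$ are given. Uniqueness of immersions forces $\iota_P(p) = \iota_Q(q)$ whenever $p \sim q$: the identity $\iota_P(o_P) = o_S = \iota_Q(o_Q)$ holds by definition of immersion, and the chart-overlap propagation defining $\sim$ is compatible with both $\iota_P$ and $\iota_Q$ because each is a local translation. Thus $\iota_P \sqcup \iota_Q$ descends to a well-defined basepoint-preserving local translation map $R \to S$, which is the desired immersion $R \imm S$, completing the verification of the universal property.
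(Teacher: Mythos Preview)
The paper does not prove the Fusion Theorem; it is imported from the companion article \cite{HooperImmersions1} (specifically, it is stated as a variant combining Theorem~\ref*{I-thm:main fusion} and Proposition~\ref*{I-prop:fusion disks} there). So there is no in-paper proof to compare against, and your attempt should be measured on its own merits.

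Your uniqueness argument has the right shape, but note that the universal clause is stated only for \emph{trivial} surfaces $S$, a term defined in \cite{HooperImmersions1} rather than here. You are implicitly assuming that any planar surface is trivial when you set $S=R_2$; you should make that explicit (or justify why the two classes coincide).

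Your existence argument has real gaps. First, you assert that the natural maps $P\to R$ and $Q\to R$ are immersions, but the equivalence relation you generate can identify two distinct points of $P$ via a chain $p_1\sim q\sim p_2$, so injectivity on small charts is not automatic; this is exactly what must be checked to know the quotient is even a manifold. Second, your simple-connectivity paragraph is not a proof: the images of $P$ and $Q$ in $R$ need not themselves be simply connected (a local homeomorphism from a disk can have non-simply-connected image), so ``contracting within its own simply connected source'' does not obviously transport to a contraction in $R$. A correct argument here would require something like van Kampen together with control over the intersection of the two images. Third, you correctly identify Hausdorffness as the crux but then dispatch it in a sentence; ``track the failure back through the developing maps and apply analytic continuation'' is where the actual work of the theorem lies, and nothing you have written distinguishes the situation from standard non-Hausdorff leaf-space or branched-cover phenomena. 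The construction in \cite{HooperImmersions1} handles these issues carefully, and they are not formalities.
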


\begin{proof}[Proof of Lemma \ref{lem:1}]
Choose a $D_2 \in \cdisk(P)$ so that $D_1 \subset D_2^\circ$ and $D_2 \subset D_3^\circ$. Choose
$$\epsilon<\frac{1}{2} \min \big\{\ER(o_P \in D_1^\circ), \ER(D_1 \subset D_2^\circ), \ER(D_2 \subset D_3^\circ), d(u,v)\big\}.$$
Let $\bar U \subset D_2^\circ$ be the closed ball of radius $\epsilon$ about $u$, and let $\bar V \subset D_2^\circ$ be the closed ball of radius $\epsilon$ about $v$. Because $2\epsilon< d(u,v)$, these balls are disjoint.

We will now construct a new planar surface, $Q$. Let $\beta_u:P \to P^u$ and $\beta_v:P \to P^v$
be basepoint changing isomorphisms. 
Consider the open disks
with alternate basepoints $\beta_u(D_3^\circ)$ and $\beta_v(D_2^\circ)$.  
These surfaces are isomorphic to planar surfaces, and we define $Q$ to be their fusion,
$$Q=\beta_u(D_3^\circ) \fuse \beta_v(D_2^\circ).$$
Associated to the fusion, we have immersions
$$\iota_u:\beta_u(D_2^\circ) \imm Q \and \iota_v:\beta_v(D_3^\circ) \imm Q.$$
Figure \ref{fig:embedding} depicts an example of $Q$, as well as the following objects. 
We define $q=\iota_v \circ \beta_v(o_P)$. We define
$\bar W$ to be the closed ball of radius $2 \epsilon$ about $\iota_u \circ \beta_u(o_P)$. 
We also choose a closed disk $K \in \cdisk(Q)$ which contains both $\iota_u \circ \beta_u(D_1)$
and $\iota_v \circ \beta_v(D_2)$. Note that $\bar W \subset \iota_u \circ \beta_u(D_1) \subset K$. 

\begin{figure}
\includegraphics[width=6in]{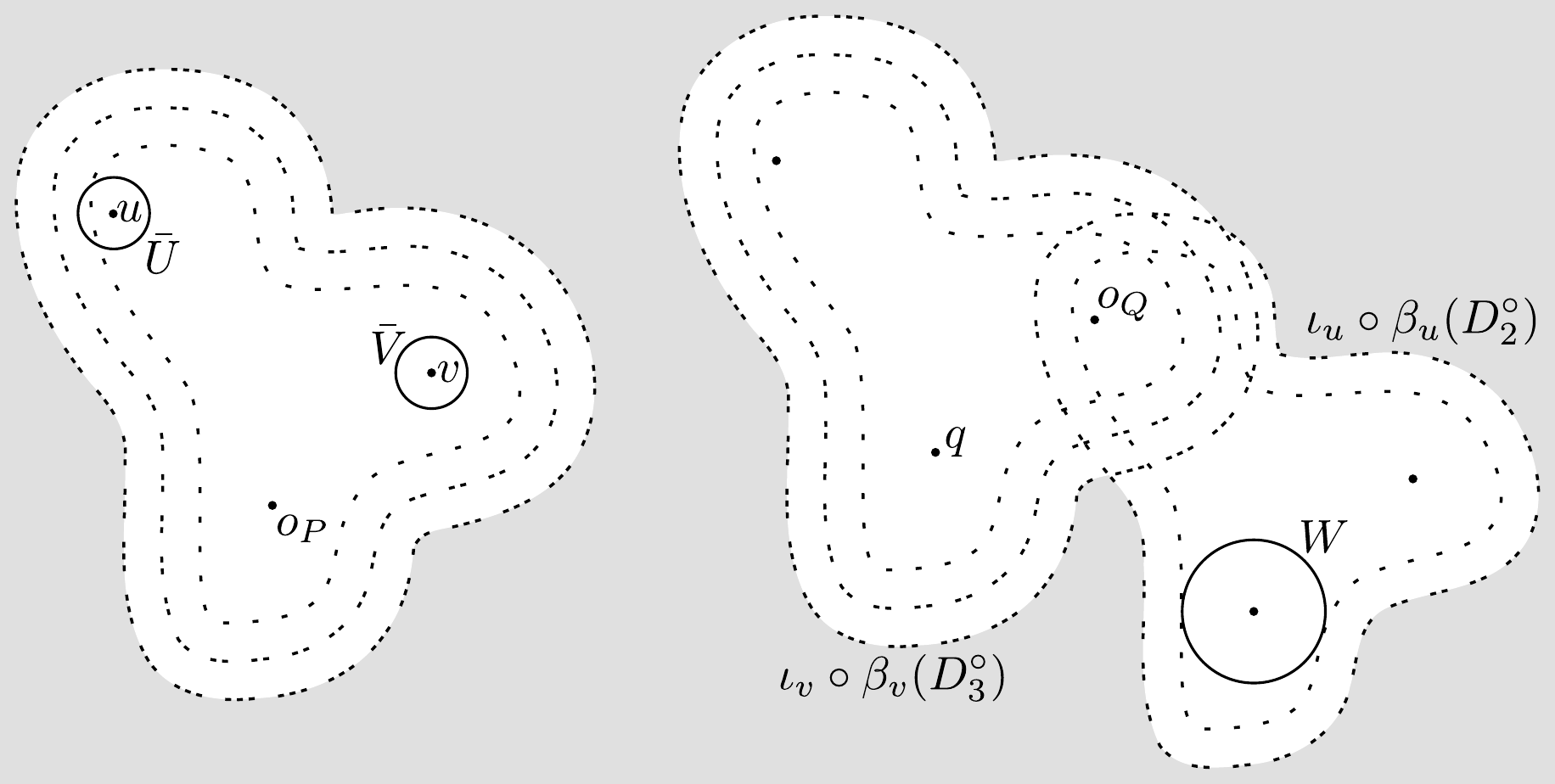}
\caption{The left side of the figure shows the three disk $D_1 \subset D_2 \subset D_3$ in $P$
and related objects. The right side
shows the fusion $Q=\beta_u(D_3^\circ) \fuse \beta_v(D_2^\circ)$.
}
\name{fig:embedding}
\end{figure}

We set $\sX$ to be the set defined at the end of the lemma. We will show that $\sX$ is open by proving that it is the same as the set 
$$\sY=\sM_{\imm}(D_3) \cap \Big(\sM_{\not \imm}(Q^q) \cup \sM_{-}(K^q,W^q)\Big).$$
Here, $K^q=\beta_q(K)$ and $W^q=\beta_q(W)$ represent subsets of $Q^q$, the surface $Q$ with basepoint moved to the position of $q$. 
Let $\sX^c=\sM_{\imm}(D_3) \smallsetminus \sX$ and $\sY^c=\sM_{\imm}(D_3) \smallsetminus \sY$. Since $\sX,\sY \subset \sM_{\imm}(D_3)$,
 is enough to prove that $\sX^c=\sY^c$. Observe that these sets are defined by:
$$\sX^c=\{S \in \sM~:~ \text{$\exists \iota:D_3 \imm S$ and $\iota(\bar U) \cap \iota(\bar V) \neq \emptyset$}\},$$
$$\sY^c=\{S \in \sM~:~ \text{$\exists \iota:D_3 \imm S$,
$\exists j:Q^q \imm S$, and $o_S \in j(W^q)$}\}.$$ 
(The equality for $\sY^c$ uses the fact that $W^q \subset K^q \subset Q^q$. In particular, the immersion $j:Q^q \imm S$ restricts to the immersion of $K^q$ into $S$.)

We will begin by showing that $\sX^c \subset \sY^c$. 
Let $S \in \sX^c$. Then there is an immersion $\iota:D_3 \imm S$
and there are points $u' \in \bar U$
and $v' \in \bar V$ so that $\iota(u')=\iota(v')$.
The balls $\bar U$ and $\bar V$ are canonically identified with a closed ball of radius $\epsilon$
in the plane. Using this identification, define the vectors $\bu=u'-u$ and $\bv=v'-v$. Since $u,v \in D_1$,
and $2 \epsilon<\ER(D_1 \subset D_2^\circ)$, there are ball embeddings
$$\BE_u:\bar B_{2 \epsilon} \to D_2^\circ \and \BE_v:\bar B_{2 \epsilon} \to D_2^\circ.$$
By construction, $u'=\BE_u(\bu)$ and $v'=\BE_v(\bv)$. Since, these points have the same image under
$\iota$, so do the points $u''=\BE_u(\bu-\bv)$ and $v$. Let $s=\iota(u'')=\iota(v)$. We get two immersions into $S^s$.
Since $D_3 \imm S$, we have $\beta_v(D_3)=D_3^v \imm S^s$. In addition, since
$\iota(u'')=s$, we have an immersion $k:\beta_{u''}(D_3) \imm S^s$.
Consider the following function:
$$f:D_2 \to D_3; \quad p \mapsto \BE_p(\bu-\bv).$$
The map $f$ respects the local translation structure and satisfies $f(u)=u''$. It follows that the following composition is an immersion:
$$k \circ \beta_{u''} \circ f \circ \beta_u^{-1}:\beta_u(D_2) \imm S^s.$$
In particular, we are using the structure the following commutative diagram:
\begin{equation}
\name{diagram:1}
\begin{tikzcd}[column sep=scriptsize, row sep=scriptsize]
D_2 \arrow{r}{f} \arrow{d}{\beta_u} & D_3 \arrow[squiggly]{r}{\iota} \arrow{d}{\beta_{u''}} & S \arrow{d}{\beta_{s}}\\
\beta_u(D_2) \arrow[squiggly]{r} & \beta_{u''}(D_3) \arrow[squiggly]{r}{k} & S^s
\end{tikzcd}
\end{equation}

By restricting to the interiors, we have immersions
$\beta_u(D_2^\circ)\imm S^s$ and $\beta_v(D_3^\circ) \imm S^s$. It follows from the Fusion Theorem that we can immerse the fusion $Q=\beta_u(D_2^\circ) \fuse \beta_v(D_3^\circ)$ into $S^s$.

We will now investigate properties of the immersion $j':Q \imm S^s$.
Recall that we set $q=\iota_v \circ \beta_v(o_P) \in Q$.
We claim that $j'(q)=\beta_s(o_S)$. This follows from the commutative diagram:
\begin{center}
\begin{tikzcd}[column sep=scriptsize, row sep=scriptsize]
D_3 \arrow[rightsquigarrow]{rr}{}{\iota} \arrow{d}{}{\beta_v} & & S \arrow{d}{}{\beta_s} \\
D_3^v \arrow[rightsquigarrow]{r}{}{\iota_v} & Q  \arrow[rightsquigarrow]{r}{}{j'} & S^s
\end{tikzcd}
\end{center}
It then follows that there is an immersion $j:Q^q \imm S$, and further immersions which make the following diagram commute:
\begin{center}
\begin{tikzcd}[column sep=scriptsize, row sep=scriptsize]
D_3 \arrow[rightsquigarrow]{rr}{}{\iota} \arrow{dd}{}{\beta_v} \arrow[squiggly]{dr} 
& & 
S \arrow{dd}{}{\beta_s} \\
& Q^q \arrow[squiggly]{ur}{}{j} & \\
D_3^v \arrow[rightsquigarrow]{r}{}{\iota_v} & Q  \arrow[rightsquigarrow]{r}{}{j'} \arrow{u}{}{\beta_q} & S^s
\end{tikzcd}
\end{center}

We make a similar argument to the above to show that $o_S \in j(W)$.
Observe that the immersion $\beta_u(D_2^\circ) \imm S^s$ factors
through $Q$. Modifying diagram \ref{diagram:1}, we have the following following commutative diagram:
$$\begin{tikzcd}[column sep=scriptsize, row sep=scriptsize]
D_2^\circ \arrow{r}{f} \arrow{dd}{\beta_u} & D_3 \arrow[squiggly]{r}{\iota} & S \arrow{dd}{\beta_{s}}\\
& Q^q \arrow[squiggly]{ur}{}{j} & \\
\beta_u(D_2^\circ) \arrow[squiggly]{r}{\iota_u} & Q \arrow[squiggly]{r}{j'} \arrow{u}{\beta_q} & S^s
\end{tikzcd}
$$
Note that the ball of radius $2 \epsilon$ centered at $o_P$
sits inside of $D_1^\circ$ and maps onto $W$ under the map $\iota_u \circ \beta_u$. Consider the point 
$r=\BE_{o_P}(\v-\u) \in D_1$. Since $r$ is within $2 \epsilon$
of $o_P$, we know that $\iota_u \circ \beta_u(r) \in W$.
This in turn is equivalent to the statement that
$\beta_q \circ \iota_u \circ \beta_u(r) \in W^q.$
It remains to show that $j \circ \beta_q \circ \iota_u \circ \beta_u(r) =o_S.$
This follows from commutativity of the diagram above
since $f(r)=o_P$.

We will now show that $\sY^c \subset \sX^c$. Choose $S \in \sY^c$. Then there is an immersion 
$\iota:D_3 \imm S$ and an immersion $j:Q^q \imm S$ so that $o_S \in j(W^q)$. 

Because we defined $s=\iota(v)$, there is an immersion $\iota':\beta_v(D_3) \imm S^s$. 
Since
$\iota_v:\beta_v(D_3^\circ) \imm Q$, and $\iota_v$ sends $\beta_v(o_P)$ to $q$,
we get an immersion $\iota_v':D_3^\circ \imm Q^q$. 
Moreover, this immersion satisfies $\iota_v'=\beta_q \circ \iota_v \circ \beta_v$.
In particular, $\iota_v'(v)=\beta_q(o_Q)$. 
By uniqueness of immersions, we get $\iota|_{D_3^\circ}=j \circ \iota_v'$.
The map $\beta_s \circ j \circ \beta_q$ sends $o_Q$ to the basepoint $\beta_s(s)$ of $S^s$,
so this map is an immersion, which we call $j'$. 
The situation is summarized by following commutative diagram:
\begin{center}
\begin{tikzcd}[column sep=scriptsize, row sep=scriptsize]
D_3^\circ \arrow[rightsquigarrow]{r}{}{\iota_v'} \arrow{d}{}{\beta_v} & Q^q \arrow[rightsquigarrow]{r}{j} & S \arrow{d}{}{\beta_s} \\
\beta_v(D_3^\circ) \arrow[rightsquigarrow]{r}{}{\iota_v} & Q \arrow{u}{\beta_q} \arrow[rightsquigarrow]{r}{}{j'} & S^s
\end{tikzcd}
\end{center}

By definition of $Q$, there is also an immersion $\iota_u:\beta_u(D_2^\circ) \imm Q$. We define
$\iota_u'=\beta_q \circ \iota_u \circ \beta_u.$ This is not an immersion, but respects the translation
structure. We have the following commutative diagram:
\begin{center}
\begin{tikzcd}[column sep=scriptsize, row sep=scriptsize]
D_2^\circ \arrow{r}{}{\iota_u'} \arrow{d}{}{\beta_u} & Q^q \arrow[rightsquigarrow]{r}{j} & S \arrow{d}{}{\beta_s} \\
\beta_u(D_2^\circ) \arrow[rightsquigarrow]{r}{}{\iota_u} & Q \arrow{u}{\beta_q} \arrow[rightsquigarrow]{r}{}{j'} & S^s
\end{tikzcd}
\end{center}
Since $o_S \in j(W^q)$, there is a $w^q \in W^q$ so that $j(w^q)=o_S$.
Let $w=\beta_q^{-1}(w_q) \in W$. By definition of $W$, there is a point $p \in P$ so that
$d(p,o_P) \leq 2 \epsilon$ and $\iota_u \circ \beta_u(p)=w$. 
Because $2 \epsilon<\ER(o_P \in D_1^\circ)$, we have $p \in D_1^\circ$
and $p=\BE_{o_P}(2\bw)$ for some vector $\bw \neq 0$ with $|\bw| \leq \epsilon$. 

Let $p_u=\BE_{o_P}(\bw)=\BE_p(-\bw) \in D_1$ and let $p_v=\BE_{o_{P}}(-\bw)$. Since
$j \circ \iota'_u(p)=o_S$ and $\iota(o_P)=o_S$, we must also have
$j \circ \iota_u'(p_u)=\iota(p_v)$. Consider the maps
$$f_u:D_1 \to D_2^\circ; \ast \mapsto \BE_\ast(\bw)  
\and
f_v:D_1 \to D_2^\circ; \ast \mapsto \BE_\ast(-\bw).$$
We claim that $j \circ \iota_u' \circ f_u=\iota\circ f_v$. This is because these maps
respect the local translation structure, and they agree at one point:
$$j \circ \iota_u' \circ f_u(o_P)=j \circ \iota_u'(p_u)=\iota(p_v)=\iota \circ f_v(o_P).$$
In particular, $j \circ \iota_u' \circ f_u(u)=\iota \circ f_v(u).$
Observe that $f_v(u) \in \bar U$. We will use this equation to find a point in $\bar V$ with the same image.
Observe that $\iota_u'(u)=\iota_v'(v)=\beta_q(o_Q)$. 
Because these maps respect the translation structure, 
we can write:
$$
\begin{array}{rcl}
j \circ \iota_u' \circ f_u(u) & = & j \circ \iota_u' \circ \BE_u(\bw)=
j \circ \BE_{\iota_u'(u)}(\bw)=j \circ \BE_{\iota_v'(v)}(\bw) \\
& =  &j \circ \iota_v' \circ \BE_{v }(\bw)=\iota \circ \BE_{v }(\bw).
\end{array}
$$
Here, we must be careful that whenever we commute $\BE(\bw)$ (say with $\iota_u'$
for the second equal sign), we must check the corresponding point ($u$ in this case) 
satisfies that the embedding radius into the domain of the subsequently applied map
($j$ in this case) is larger than $\epsilon$. So for the second equals sign, this involves noting
that $u \in D_1$, $\iota_u'$ is defined on $D_2^\circ$ and $\ER(D_1 \subset D_2^\circ)>\epsilon$.
The conclusion is the observation that 
$f_v(u) \in \bar U$, $\BE_{v }(\bw) \in \bar V$,
and $\iota \circ f_v(u)=\iota \circ \BE_{v }(\bw)$. This proves that
$\iota(\bar U) \cap \iota(\bar V) \neq \emptyset$, and therefore $S \in \sX^c$. 
\end{proof}

We now prove the embedding theorem:

\begin{proof}[Proof of Theorem \ref{thm:embedding}]
Let $D_1$ be a closed disk in a planar surface $P$. We will show that $\sM_{\emb}(D)$ is open. 
Let $T \in \sM_{\emb}(D_1)$ be a translation surface. Then $D_1 \emb T$
and we can find a $D_3 \in \cdisk(P)$
so that $D_1 \subset D_3^\circ$ and $D_3 \emb T$. 

For each pair $u,v \in D_1$, we will use the lemmas above to produce
a neighborhood $U(u,v)$ of $u$ and a neighborhood $V(u,v)$ of $v$ and
an open set $\sO(u,v) \subset \sM$. If $u \neq v$, we construct $U(u,v)$ and $V(u,v)$
as in Lemma \ref{lem:1}, and let $\sO(u,v) \subset \sM$ be the open set produced. 
If $u=v$, we take $U(u,v)=V(u,v)$ to be the set $U$ in Lemma \ref{lem:2}, and still define 
$\sO(u,v) \subset \sM$ to be the open set produced. 

The collection $\{U^\circ(u,v)\times V^\circ(u,v)~:~u,v \in D_1\}$ forms an open covering of the compact set $D_1 \times D_1$. So, we can find a finite set of pairs $(u_k,v_k)$ for $k=1,\ldots, K$ so that
$$D_1 \times D_1 \subset \bigcup_{k=1}^K U_k \times V_k,$$
where $U_k=U(u_k,v_k)$ and $V_k=V(u_k,v_k)$. We define the open set
$$\sU=\bigcap_{k=1}^K \sO_k, \quad \text{with $\sO_k=\sO(u_k,v_k)$}.$$

Two claims about $\sU$ will prove the theorem. First, we claim that $T \in \sU$.
This is because $D_3 \emb T$. It follows then from the statements of 
Lemma \ref{lem:1} and Lemma \ref{lem:2} that $T \in \sO(u,v)$ for all $u,v\in D_1$. 

Second, we claim that if $S \in \sU$, then $D_1 \emb S$. Note that the statement $S \in \sO(u,v)$ for any $u,v \in D_1$
implies that there is an immersion $\iota:D_3 \imm S$. We claim that the restriction $\iota|_{D_1}$ is an embedding.
Suppose otherwise. Then there are distinct $u,v \in D_1$ so that $\iota(u)=\iota(v)$. Because of our
covering, there is a $k$ so that $u \in U_k$ and $v \in V_k$. But the fact that $T \in \sO_k$ precludes
the possibility of $\iota(u)=\iota(v)$ by definition of $\sO_k$. 
\end{proof}

We now prove the disjointness theorem:

\begin{proof}[Proof of Theorem \ref{thm:disjointness}]
The proof is similar in spirit to the prior proof. Let $D$ be a closed disk, and let $K_1,K_2 \subset D$ be closed and disjoint.
We can assume without loss of generality that $K_1$ and $K_2$ lie in the interior of $D$. (Whenever a closed disk immerses in a surface,
there is a slightly larger closed disk which also immerses.)

Let $S \in \sM_\emptyset(D; K_1, K_2)$. Then, there is an immersion $\iota:D \imm S$ and $\iota(K_1) \cap \iota(K_2)=\emptyset$.
For each pair of points $(u,v) \in K_1 \times K_2$, we use Lemma \ref{lem:1} to choose closed balls 
$\bar U=\bar U(u,v)$ about $u$ and $\bar V=\bar V(u,v)$ about $v$ so that 
$\iota(\bar U) \cap \iota(\bar V)=\emptyset$ and so that the following set is open: 
$$\sO(u,v)=\{S \in \sM~:~\text{$\exists \iota:D \imm S$ and $\iota(\bar U) \cap \iota(\bar V)=\emptyset$}\}.$$
Then, the collection of interiors of sets in
$\{\bar U(u,v) \times \bar V(u,v)~:~(u,v) \in K_1 \times K_2\}$
covers $K_1 \times K_2$. By compactness, there is a finite collection of pairs
$\{(u_1,v_1), \ldots, (u_k,v_k)\} \subset K_1 \times K_2$ so that 
$$K_1 \times K_2 \subset \bigcup_{i=1}^k \bar U(u_i,v_i) \times \bar V(u_i,v_i).$$
We observe that
$$S \in \bigcap_{i=1}^k \sO(u_i,v_i) \subset \sM_\emptyset(D; K_1, K_2).$$
\end{proof}

\begin{proof}[Proof of Proposition \ref{prop:E-}]
Let $D$ be a closed disk in a translation surface, and let $K \subset D$ be a closed subset.
We will show that $\sE_-(D,K)$ is open in $\sE$. 

Choose $(S,s) \in \sE_-(D,K)$. Then, there is an immersion $\iota:D \imm S$ and $s \not \in \iota(K)$.
We can therefore choose a closed metric ball $B$ about $s$ which is isometric to a Euclidean ball
and disjoint from $\iota(K)$. By lifting, we also get an immersion $\tilde \iota:D \imm \tilde S$. 
Let $p_S:\tilde S \to S$ be the universal covering map.
Choose a $\tilde s \in \tilde S$ so that $p_S(\tilde s)=s$ and take $\tilde B \subset \tilde S$ to be the closed ball of the same radius
about $\tilde s$. Choose a $D' \in \cdisk(\tilde S)$ to be large enough so that $\tilde B \subset D'$ and $\tilde \iota(D) \subset D'$. 
We claim that
$$(S,s) \in \sE_+(D_2,\tilde B^\circ) \cap \pi^{-1}\big(\sM_\emptyset(D_2; \tilde \iota(K), \tilde B)\big) \subset \sE_-(D,K).$$
This will prove that $\sE_-(D,K)$ is open.

First, we prove that $(S,s)$ lies in this intersection. The restriction of the covering map $p_S$ gives an immersion
$D_2 \imm S$. Moreover $s \in B^\circ=p_S(\tilde B^\circ)$. This proves that $(S,s) \in \sE_+(D_2,\tilde B^\circ)$.
In addition, by uniqueness of immersions we have $\iota=p_S \circ \tilde \iota$. So, 
$p_S\big(\tilde \iota(K)\big)=\iota(K)$, and we constructed $B$ to be disjoint from $\iota(K)$. Thus,
$S \in \sM_\emptyset(D_2; \tilde \iota(K), \tilde B)$.

Second, we must show that we have the inclusion. Suppose $(T,t) \in \sE_+(D_2,\tilde B^\circ)$ and $T \in \sM_\emptyset(D_2; \tilde \iota(K), \tilde B)$.
Then, we get an immersion $j:D_2 \imm T$, $t \in j(\tilde B^\circ)$ and $j \circ \tilde \iota(K) \cap j(\tilde B)=\emptyset$. 
Since $t \in j(\tilde B)$, we know that $t \not \in j \circ \tilde \iota(K)$. Observe that $j \circ \tilde \iota$ is the unique immersion
of $D$ into $T$, so this proves that $(T,t) \in \sE_-(D,K)$.
\end{proof}

\section{Translation structures on the disk}
\name{sect:disk}

\subsection{Equivalence of topologies}
\name{sect:equivalence}
Recall that $\tilde M \subset \sM$ is the collection of all isomorphism classes
of translation structures on the disk, and that this space has been identified
with the collection of all planar surfaces, which are fibers in $\tE \subset \sM$.
The spaces $\tM$ and $\tE$ inherit subspace topologies from their inclusions into $\sM$ and $\sE$, respectively.
These spaces were also formally investigated in \cite{HooperImmersions1}, and were given a topology in that paper.
The paper \cite{HooperImmersions1} also placed topologies on $\tM$ and $\tE$. In this subsection,
we will prove that these topologies are the same.

\begin{lemma}
The subspace topology on $\tM$ induced by the immersive topology on $\sM$ is
the same as the topology on $\tM$ defined in \cite{HooperImmersions1}.
\end{lemma}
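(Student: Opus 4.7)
The plan is to verify that the subbasis for the topology on $\tM$ from \cite{HooperImmersions1} agrees, after trivial reformulation, with the subbasis obtained by restricting the immersive topology on $\sM$ to $\tM$. I would compare the two generating families one subbasis element at a time.

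First, by Proposition \ref{prop:lift} and Corollary \ref{prop:indistinguishable}, every closed or open disk used in a subbasis element of the immersive topology on $\sM$ is isomorphic (via lifting to its universal cover) to a disk living inside some planar surface $P \in \tM$, and the truth of any immersion or embedding statement is unchanged under this replacement. Thus I may assume throughout that all disks $D$, $U$, and all closed sets $K \subset D$ appearing in the subbasis elements $\sM_\imm(D) \cap \tM$, $\sM_{\not\imm}(U) \cap \tM$, $\sM_+(D,U) \cap \tM$, and $\sM_-(D,K) \cap \tM$ live inside planar surfaces.

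Next comes the key identification: for any $P \in \tM$, the surface $P$ is simply connected, so the universal covering map $p_P : \tilde P \to P$ may be taken to be the identity. By Proposition \ref{prop:lifted immersion}, the notion of immersion of a disk into $P$ used to define the subbasis of the immersive topology on $\sM$ is literally the same as the notion of immersion into a planar surface used in \cite{HooperImmersions1}, and the basepoint conditions in $\sM_+(D,U)$ and $\sM_-(D,K)$ translate directly into the analogous conditions at $o_P$. Consequently each subbasis element $\sM_\imm(D) \cap \tM$, $\sM_{\not\imm}(U) \cap \tM$, $\sM_+(D,U) \cap \tM$, and $\sM_-(D,K) \cap \tM$ coincides set-theoretically with the corresponding generator $\tM_\imm(D)$, $\tM_{\not\imm}(U)$, $\tM_+(D,U)$, $\tM_-(D,K)$ of the topology of \cite{HooperImmersions1}.

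Since the two subbases coincide, the two topologies coincide. I expect the only real obstacle is verifying the precise form of the generating family used in \cite{HooperImmersions1}; should that paper employ a slightly different but equivalent subbasis (for example, one phrased in terms of developing map data rather than immersions of abstract disks), I would show the families are interchangeable by approximating arbitrary disks by rational rectangular unions via Theorem \ref*{I-thm:nested} of \cite{HooperImmersions1}, exactly as in the second-countability argument of \S\ref{sect:countability}, together with the Embedding Theorem (Theorem \ref{thm:embedding}) to compare immersion and embedding subbasis elements where needed.
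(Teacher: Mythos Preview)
Your plan rests on the claim that the subbasis in \cite{HooperImmersions1} for $\tM$ consists of the four families $\tM_\imm(D)$, $\tM_{\not\imm}(U)$, $\tM_+(D,U)$, $\tM_-(D,K)$, matching the four families defining the immersive topology on $\sM$. That assumption is incorrect: the topology on $\tM$ in \cite{HooperImmersions1} is defined as the coarsest one making $\tM_{\imm}(K)$ open for closed disks $K$ and $\tM_{\not\emb}(U)$ open for open disks $U$---only two families, and with \emph{embedding} rather than immersion in the second. There are no sets $\tM_+(D,U)$ or $\tM_-(D,K)$ among the defining generators. So the two subbases do not coincide, and two genuine pieces of work remain.

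For one inclusion you must show that $\tM_{\not\emb}(U)$ is open in the subspace topology from $\sM$; since the $\sM$-subbasis only gives you $\sM_{\not\imm}(U)$, the case where $U$ immerses but does not embed in $P$ requires an argument (the paper handles it by passing to a closed disk in $U$ whose immersed image fails to embed, and then enlarging to a closed disk in $P$). For the other inclusion you must show that $\sM_+(D,U)\cap\tM$ and $\sM_-(D,K)\cap\tM$ are open in the \cite{HooperImmersions1} topology on $\tM$; these sets are not generators there, and the paper's route is to recognize them as $\tilde\sigma^{-1}\big(\tE_+(\tilde D,\tilde U)\big)$ and $\tilde\sigma^{-1}\big(\tE_-(\tilde D,\tilde K)\big)$, where $\tilde\sigma:\tM\to\tE$ is the continuous canonical section $P\mapsto(P,o_P)$ and $\tE_\pm$ are known to be open from \cite{HooperImmersions1}. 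Your fallback suggestion (rational rectangular unions and the Embedding Theorem) does not supply this section argument and would not close the gap.
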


\begin{proof}
Recall that a topology is a collection of sets (satisfying certain axioms), which by definition are open.
For this proof, let $\sI$ denote the the topology on $\tM$ as defined in this paper,
and let $\sI'$ denote the topology on $\tM$ defined in \cite{HooperImmersions1}.

We will begin by explaining why $\sI' \subset \sI$.
The topology on $\sI'$ was defined in \cite{HooperImmersions1} to be the coarsest topology so that:
\begin{itemize}
\item Whenever $K$ is a closed disk in a planar surface, $\tM_{\imm}(K)=\{P \in \tM~:~K \imm P\}$ is open.
\item Whenever $U$ is an open disk in a planar surface, $\tM_{\not \emb}(U)=\{P \in \tM~:~U \not \emb P\}$ is open.
\end{itemize}
Each set of the form $\tM_{\imm}(K)$ is open in $\sI$, since $\tM_{\imm}(K)=\sM_{\imm}(K) \cap \tM$. 

Now let $P \in \tM_{\not \emb}(U)$. We will show that there is an open set in $\sI$ which contains
$P$ and is contained in $\tM_{\not \emb}(U)$. There are two possibilities. If $U \not \imm P$, then we have
$$P \in \tM \cap \sM_{\not \imm}(U) \subset \tM_{\not \emb}(U),$$
and we conclude by noting that $\tM \cap \sM_{\not \imm}(U) \in \sI$.
Otherwise, there is an immersion $\iota:U \imm P$. 
In this case, we take an increasing family $\{D_t \in \cdisk(U)\}_{t>0}$ so that 
$\bigcup_{t>0} D_t=U$. Since $U \not \emb P$, there is a $D \in \{D_t\}$ so that $D \not \emb P$. 
We claim that if $\iota(D)$ immerses in a planar surface $Q$, then $U \not \emb Q$. 
Assume to the contrary that there is an immersion $j:\iota(D) \imm Q$ and an embedding $e:U \emb Q$. 
By the uniqueness of immersions, we have
$e|_{D}=j \circ \iota|_D$. But, $\iota|_D$ is not an embedding so $e$ is also not. This proves that
\begin{equation}
\name{eq:P}
P \in \tM_{\imm}\big(\iota(D)\big) \subset \tM_{\not \emb}(U).
\end{equation}
But, it is not clear that $\tM_{\imm}\big(\iota(D)\big) \in \sI$, because
$\iota(D)$ need not be a disk. Let $R \in \tM_{\imm}\big(\iota(D)\big)$. Then there is an immersion
$k:\iota(D) \imm R$. By continuity $k \circ \iota(D)$ is compact, so we can find an $E \in \cdisk(R)$ so
that $k \circ \iota(D) \subset E$. Then, 
$$R \in \tM_{\imm}(E) \subset \tM_{\imm}\big(\iota(D)\big).$$
From the previous paragraph, we know $\tM_{\imm}(E) \in \sI$, which implies that $\tM_{\imm}\big(\iota(D)\big) \in \sI$. Then, it follows from equation \ref{eq:P}
that $\tM_{\imm}\big(\iota(D)\big) \in \sI$.

Now we will show that $\sI \subset \sI'$. As in the prior proof, we work through our subbasis of $\sI$.
This consists of intersections with $\tM$ of the four types of sets described in \S \ref{sect:top M} for the subbasis for topology on $\sM$. We work through these four types one at a time. First, let $D$ be a closed disk in a translation surface. Then $D$ has a lift $\tilde D$ to its universal cover by Proposition \ref{prop:lift}. From above,
and because $D$ and $\tilde D$ are indistinguishable via immersions (Proposition \ref{prop:indistinguishable}),
we have 
$$\tM \cap \sM_{\imm}(D)=\tM \cap \sM_{\imm}(\tilde D)=\tM_{\imm}(\tilde D).$$
Second, let $U$ be an open disk in a translation surface. Again, let $\tilde U$ be the lift to the surface's
universal cover. We have 
$$\tM \cap \sM_{\not \imm}(U)=\tM \cap \sM_{\not \imm}(\tilde U)=\{P \in \tM~:~\tilde U \not \imm P\}.$$
This last set is denoted $\tM_{\not \imm}(\tilde U)$ in \cite{HooperImmersions1},
and this set is open by \cite[Theorem \ref*{I-thm:open}]{HooperImmersions1}.

To handle the remaining two basis elements of $\sI$, we must recall two facts from \cite{HooperImmersions1}.
First, the canonical section of the projection $\tpi:\tE \to \tM$ is given by
$$\tilde \sigma:\tM \to \tE; \quad P \mapsto (P,o_P)$$
is continuous by \cite[Proposition \ref*{I-prop:section}]{HooperImmersions1}. Second, for any closed disk $\tilde D$ in a planar surface, any $\tilde U \subset \tilde D^\circ$ open, and any $\tilde K \subset \tilde D$ closed, 
the following two sets are open:
\begin{equation}
\name{eq:tE+}
\tE_+(\tilde D,\tilde U)=\{(P,p) \in \tE~:~\text{$\exists \iota:\tilde D \imm P$ and $p \in \iota(\tilde U)$}\}.
\end{equation}
\begin{equation}
\name{eq:tE-}
\tE_-(\tilde D,\tilde K)=\{(P,p) \in \tE~:~\text{$\exists \iota:\tilde D \imm P$ and $p \not \in \iota(\tilde K)$}\}.
\end{equation}
Sets of the form $\tE_+(\tilde D,\tilde U)$ form the subbasis for the topology on $\tE$ defined in 
 \cite[\S \ref*{I-sect:topology total}]{HooperImmersions1}, and sets of the form $\tE_-(\tilde D,\tilde K)$
are open by \cite[Proposition \ref*{I-prop:E-}]{HooperImmersions1}.

Now we consider the third and fourth types of open sets for the topology $\sI$. Let $D$ be a closed
disk in a translation surface, let $\tilde U \subset D^\circ$ be open, and let $K \subset D$ be closed.
Let $\ell_D:D \emb \tilde D$ be the lift of $D$ to the universal cover of the translation surface containing $D$ 
as in Proposition \ref{prop:lift}.
Let $\tilde U=\ell_D(U)$ and $\tilde K=\ell_D(K)$. Observe that we have the following identities:
$$\tM \cap \sM_+(D,U)=\tM \cap \sM_+(\tilde D,\tilde U)=\tilde \sigma^{-1} \big(\tE_+(\tilde D,\tilde U)\big).$$
$$\tM \cap \sM_-(D,K)=\tM \cap \sM_-(\tilde D,\tilde K)=\tilde \sigma^{-1} \big(\tE_-(\tilde D,\tilde K)\big).$$
It follows that both sets are open in $\sI'$ by the continuity of $\tilde \sigma$. 
\end{proof}

\begin{lemma}
The subspace topology on $\tE$ induced by the immersive topology on $\sE$ is
the same as the topology on $\tE$ defined in \cite{HooperImmersions1}.
\end{lemma}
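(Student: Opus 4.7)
The plan mirrors the proof of the previous lemma. Write $\sI$ for the subspace topology on $\tE$ inherited from the immersive topology on $\sE$, and $\sI'$ for the topology on $\tE$ defined in \cite{HooperImmersions1}. Recall from the prior proof that $\sI'$ is generated by the subbasis $\{\tE_+(\tilde D, \tilde U)\}$ from equation \ref{eq:tE+}, where $\tilde D$ ranges over closed disks in planar surfaces and $\tilde U$ over open subsets of $\tilde D^\circ$. I will verify both inclusions $\sI' \subset \sI$ and $\sI \subset \sI'$ by working through the respective subbases.

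The inclusion $\sI' \subset \sI$ is immediate: each subbasis element satisfies $\tE_+(\tilde D, \tilde U) = \tE \cap \sE_+(\tilde D, \tilde U)$, and $\sE_+(\tilde D, \tilde U)$ is a subbasis element for the immersive topology on $\sE$. For $\sI \subset \sI'$, a subbasis for $\sI$ splits into two types. Type (i) consists of sets $\tE \cap \sE_+(D, U)$ for $D$ a closed disk and $U \subset D^\circ$ open in some translation surface; I lift $D$ to a closed disk $\tilde D$ in its universal cover via Proposition \ref{prop:lift}, set $\tilde U = \ell_D(U)$, and invoke Proposition \ref{prop:lifted immersion} to obtain $\tE \cap \sE_+(D, U) = \tE_+(\tilde D, \tilde U) \in \sI'$. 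Type (ii) consists of preimages $\tE \cap \pi^{-1}(V) = \tpi^{-1}(V \cap \tM)$ for $V$ open in $\sM$; by the previous lemma, $V \cap \tM$ is open in the $\tM$-topology from \cite{HooperImmersions1}, so type (ii) reduces to verifying that $\tpi: (\tE, \sI') \to \tM$ is continuous.

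Continuity of $\tpi$ in $\sI'$ is the expected main obstacle. To establish it, I will check that preimages of the two subbasis types for the $\tM$-topology are open in $\sI'$. For $\tM_{\imm}(\tilde K)$, given $(P, p) \in \tpi^{-1}(\tM_{\imm}(\tilde K))$ with immersion $\tilde\iota: \tilde K \imm P$, apply Theorem \ref*{I-thm:nested} of \cite{HooperImmersions1} to engulf $\tilde\iota(\tilde K) \cup \{p\}$ in a closed rectangular union $\tilde D \in \cdisk(P)$ and choose a small open rectangle $\tilde V \ni p$ in $\tilde D^\circ$; then $\tE_+(\tilde D, \tilde V) \subset \tpi^{-1}(\tM_{\imm}(\tilde K))$ by transitivity of immersions. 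For $\tM_{\not\emb}(\tilde U)$, I adapt the argument surrounding equation \ref{eq:P} in the previous lemma: exhaust $\tilde U$ by an increasing family of closed sub-disks and extract one, $\tilde D_0$, whose image under any attempted immersion fails to embed into $P$ (using that nested immersions extend uniquely to the union to handle the subcase $\tilde U \not\imm P$, in which one instead chooses $\tilde D_0$ that fails to immerse at all, and enlarges it appropriately inside $P$ around $p$); transitivity of immersions then lets a neighborhood of the form $\tE_+(\tilde D, \tilde V)$ built around $\tilde\iota(\tilde D_0) \cup \{p\}$ do the work.
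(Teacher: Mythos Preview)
Your overall decomposition into $\sI'\subset\sI$ and $\sI\subset\sI'$ is right, but you are working from an incomplete description of $\sI'$. Despite the casual phrasing in the previous lemma's proof, the topology on $\tE$ from \cite{HooperImmersions1} is defined (as the paper's own proof of the present lemma states explicitly) as the coarsest topology for which $\tpi:\tE\to\tM$ is continuous \emph{and} the sets $\tE_+(\tilde D,\tilde U)$ are open. With this correction both inclusions become short: for $\sI'\subset\sI$ you must also check that $\tpi$-preimages of $\tM$-open sets lie in $\sI$, which is immediate from $\tpi=\pi|_{\tE}$ together with the previous lemma; for $\sI\subset\sI'$, what you call the ``main obstacle''---continuity of $\tpi$ in $\sI'$---is part of the definition and needs no argument. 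That is exactly how the paper proceeds.

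Your attempt to derive that continuity from the $\tE_+$ sets alone cannot be repaired. In the subcase $\tilde U\not\imm P$ of your treatment of $\tpi^{-1}\big(\tM_{\not\emb}(\tilde U)\big)$, the phrase ``enlarges it appropriately inside $P$'' is meaningless since $\tilde D_0$ has no image in $P$. More decisively, a finite intersection $\bigcap_i \tE_+(\tilde D_i,\tilde V_i)$ only asserts that certain closed disks immerse into $Q$; it can never force anything to fail to immerse. Concretely, whenever $(P,p)$ lies in such an intersection so does $\big(\R^2,\dev|_P(p)\big)$, since each $\tilde D_i$ immerses into $\R^2$ via its developing map; and if $\dev$ restricted to $\tilde U$ is injective then $\tilde U\emb\R^2$, so the intersection is not contained in $\tpi^{-1}\big(\tM_{\not\emb}(\tilde U)\big)$. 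Hence the $\tE_+$ sets alone generate a strictly coarser topology, and the paper's appeal to the definition of $\sI'$ is not a shortcut but the substance of the proof.
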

\begin{proof}
For this proof, let $\sJ$ denote the the topology on $\tE$ given as the subspace
topology inherited from the immersive topology as defined in this paper,
and let $\sJ'$ denote the topology on $\tE$ defined in \cite{HooperImmersions1}.

We will begin by explaining why $\sJ' \subset \sJ$. The topology is defined in
$\sJ'$ is the coarsest topology so that $\tilde \pi:\tE \to \tM$ is continuous, and so that
sets of the form 
$\tE_+(\tilde D,\tilde U)$, defined in equation \ref{eq:tE+} are open. Recall that the projection
$\pi:\sE \to \sM$ is defined to be continuous in the topology on $\sE$, and 
we have $\tpi=\pi|_{\tE}$. Thus, $\tpi$ is continuous in the topology $\sJ$. 
Also observe that for any closed disk $\tilde D$ in a planar surface, and any $\tilde U \subset \tilde D^\circ$ open,
we have 
$$\tE_+(\tilde D,\tilde U)=\tE \cap \sE_+(\tilde D, \tilde U).$$
The set $\sE_+(\tilde D, \tilde U)$ is a subbasis element of $\sJ$. 

Now we will show that $\sJ \subset \sJ'$. By definition $\sJ$ is the coarsest topology 
so that $\pi|_{\tE}$ is continuous, and for any closed disk $D$ in a translation surface
any $U \subset D^\circ$ open, and $K \subset D$ closed, we have that $\tE \cap \sE_+(D, U)$. 
Again $\pi|_{\tE}=\tpi$, which is defined to be continuous in $\sJ'$. 
Fix $D$, $U$ and $K$ as above. 
Define $\tilde D$, $\tilde U$ and $\tilde K$ as in the last paragraph of the previous proof. Then, we have:
$$\tE \cap \sE_+(D, U)=\tE \cap \sE_+(\tilde D, \tilde U)=\tE_+(\tilde D, \tilde U).$$
Here, the set on the right hand side is as in equation \ref{eq:tE+},
and are in $\sJ'$ as discussed surrounding that equation.
\end{proof}

\subsection{Continuity and the universal cover}
Since the subspace topology defined on $\tM \subset \sM$ is the same as the topology
defined in \cite{HooperImmersions1}, we note that this topology has a particularly nice subbasis.
We restate Corollary \ref*{I-cor:immersion subbasis} of \cite{HooperImmersions1}:
\begin{corollary}[Subbasis of $\tM$]
\name{cor:subbasis M}
A subbasis for the topology on $\tM \subset \sM$ is given by sets of the following two forms:
\begin{itemize}
\item Sets of the form $\tM_{\imm}(K)=\{Q \in \tM~:~K \imm Q\}$ where $K \in \cdisk(P)$ for some $P \in \tM$.
\item Sets of the form $\tM_{\not \imm}(U)=\{Q \in \tM~:~U \not \imm Q\}$ where $U \in \disk(P)$ for some $P \in \tM$.
\end{itemize}
\end{corollary}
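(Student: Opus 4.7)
The plan is to reduce this corollary entirely to its analogue in the companion paper via the topology equivalence just established.

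First I will invoke the preceding lemma, which shows that the subspace topology on $\tM \subset \sM$ (inherited from the immersive topology on $\sM$) coincides with the topology on $\tM$ defined intrinsically in \cite{HooperImmersions1}. Because of this coincidence, exhibiting a subbasis for the former is the same problem as exhibiting a subbasis for the latter, and I am free to work with whichever description is more convenient.

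Second, I will simply cite Corollary \ref*{I-cor:immersion subbasis} of \cite{HooperImmersions1}, which asserts precisely that the two families listed here form a subbasis for the topology on $\tM$ defined there. No additional work is needed on our side. For the reader's benefit I would recall the structure of the content that is hidden inside that citation: the topology of \cite{HooperImmersions1} is defined using $\tM_{\imm}(K)$ together with $\tM_{\not \emb}(U)$ (sets defined by failure of embedding), and the nontrivial content of the cited corollary is that $\tM_{\not \emb}(U)$ can be replaced by $\tM_{\not \imm}(U)$. The easy direction is the inclusion $\tM_{\not \imm}(U) \subset \tM_{\not \emb}(U)$, which is automatic because every embedding is an immersion. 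The harder direction expresses $\tM_{\not \emb}(U)$ as a union of intersections of finitely many sets of the two forms listed, using a nested exhaustion of $U$ by closed disks and the uniqueness of immersions (much as in the $U_1,U_2,U_3$ argument that appears in the proof of second countability of $\sM$ in this paper).

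Since the cited corollary handles all the real content, the only ``obstacle'' is the bookkeeping to confirm that the equivalence-of-topologies lemma legitimately transports the subbasis statement from one framework to the other; this transport is immediate once one observes that both $\tM_{\imm}(K)$ and $\tM_{\not \imm}(U)$ are invariants of the isomorphism class of $K$ and $U$ respectively and depend only on the pointed translation structure $P \in \tM$, so no ambient data from $\sM$ is lost in the restriction.
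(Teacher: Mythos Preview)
Your proposal is correct and matches the paper's approach exactly: the paper presents this corollary as a direct restatement of Corollary~\ref*{I-cor:immersion subbasis} from \cite{HooperImmersions1}, justified by the topology-equivalence lemma immediately preceding it. No proof is given in the paper beyond that one-sentence citation, so your additional commentary on the content hidden inside the citation is extra exposition rather than a required part of the argument.
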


Let $\upsilon:\sM \to \tM$ be the map which sends a translation surface $S$ to its universal cover $\tilde S$.
We show this map is continuous as claimed in Theorem \ref{thm:universal cover}.

\begin{proof}[Proof of Theorem \ref{thm:universal cover}]
We will check that the preimage of the subbasis of open sets provided by Corollary \ref{cor:subbasis M}
is open. By Proposition \ref{prop:lifted immersion},
the statement that a simply connected set immerses in a surface $S$ is logically equivalent to
the statement that the set immerses in $\tilde S$. Thus,
$\upsilon^{-1}\big(\tM_{\imm}(K)\big)=\sM_{\imm}(K)$ for all closed disks $K$, and 
$\upsilon^{-1}\big(\tM_{\not \imm}(U)\big)=\sM_{\not \imm}(U)$ for all open disks $U$.
These sets are open by definition of the immersive topology on $\sM$.
\end{proof}

\subsection{The Hausdorff property}
We will now prove that the immersive topologies on $\sM$ and $\sE$ are Hausdorff:
\begin{proof}[Proof of Theorem \ref{thm:immersive}]
We proved that the topologies on $\sM$ and $\sE$ are second countable in \S \ref{sect:countability}. It remains
to prove that the topologies are Hausdorff.

We begin by showing $\sM$ is a Hausdorff space. 
Let $S$ and $T$ be distinct points in $\sM$. We will prove that they can be separated by disjoint open sets. 

First, suppose that their universal covers $\tilde S$ and $\tilde T$ are distinct. By Theorem \cite[Theorem \ref*{I-thm:Hausdorff}]{HooperImmersions1}, the topology on $\tM$ is Hausdorff. So, there are disjoint open sets $\sU$ and 
$\sV$ in $\tM$ so that $\tilde S \in \sU$ and $\tilde T \in \sV$. By Theorem \ref{thm:universal cover},
the sets $\upsilon^{-1}(\sU)$ and $\upsilon^{-1}(\sV)$ are open. They separate $S$ and $T$.

Now suppose that $\tilde S=\tilde T$. Let $p_S:\tilde S \to S$ and $p_T: \tilde S \to T$ be the associated covering maps. By distinctness, we can assume without loss of generality that there is a $\tilde s \in \tilde S$ so
that $p_S(\tilde s)=o_S$ but $p_T(\tilde s) \neq o_T$. Choose a $D \in \cdisk(\tilde S)$
so that $\tilde s \in D^\circ$. By discreteness of the lifts of the basepoint of $T$ to $\tilde S$,
we can choose an open set $U \subset D^\circ$ so that $\tilde s \in U$ and $\bar U \cap p_T^{-1}(o_T)=\emptyset$. 
Then we have $S \in \sM_+(D,U)$ and $T \in \sM_-(D,\bar U)$. These sets are open by definition
of the immersive topology on $\sM$. Moreover, they are disjoint since
$R \in \sM_-(D,\bar U)$ implies that there is an immersion $\iota:D \to R$ with $o_R \not \in \iota(\bar U)$.
In this case, $o_R \not \in \iota(U)$ so $R \not \in \sM_+(D,U)$.

Now we will prove that the immersive topology on $\sE$ is Hausdorff.
Let $(S,s)$ and $(T,t)$ be distinct points in $\sE$. If $S \neq T$, then since $\sM$ is Hausdorff,
we can find disjoint open sets $\sU, \sV \subset \sM$ so that $S \in \sU$ and $T \in \sV$. Then,
$(S,s) \in \pi^{-1}(\sU)$ and $(T,t) \in \pi^{-1}(\sV)$. These sets are disjoint. The definition of the topology on $\sE$ guarantees that $\pi$ is continuous, so these sets are also open.

Now suppose $S=T$. Then, $s,t \in S$ are distinct. Choose lifts $\tilde s, \tilde t \in \tilde S$.
Then we can find a closed disk $D \in \cdisk(\tilde S)$ so that $\tilde s, \tilde t \in D^\circ$. 
Then, the open set $p_S(D^\circ) \subset S$ contains both $s$ and $t$. Since $p_S(D^\circ)$ is an open set in a surface, it is Hausdorff, so we can find an open set $U$ containing $s$ so that $t \not \in \bar U$.
Then, $(S,s) \in \sE_+(D,U)$ and $(S,t) \in \sE_-(D,\bar U)$. These sets are disjoint by the same reasoning
as we used to conclude that $\sM$ is Hausdorff.
\end{proof}

\subsection{Proof of the Projection Theorem}

\begin{proof}[Proof of Theorem \ref{thm:covering projection}]
We will prove the covering projection, $p:(S,\tilde s) \mapsto p_S(\tilde s) \in \sE$ is continuous.
Since $\pi \circ p:(S,\tilde s) \mapsto S$ is continuous, it suffices to show that $p^{-1}\big(\sE_+(D,U)\big)$ is open for every
closed disk $D$ and every open $U \subset D^\circ$. 

Suppose that $(S,\tilde s) \in p^{-1}\big(\sE_+(D,U)\big)$. Let $s=p(S,\tilde s)=p_S(\tilde s)$. Then,
there is an immersion $\iota:D \imm S$ and $s \in \iota(U)$. It follows that there is an immersion 
$\tilde \iota:D \imm \tilde S$, and there is a lift $\tilde s_\ast$ of $s$ inside of $\tilde \iota(U)$. 
Since $p_S(\tilde s)=p_S(\tilde s_\ast)$, there is an element of the deck group of the covering,
$\gamma:\tilde S \to \tilde S$,
so that $\gamma(\tilde s_\ast)=\tilde s$. Let $\tilde o_\ast=\gamma(\tilde o_S)$ be the image
of the basepoint of $\tilde S$. 

Choose a $\tilde D \in \cdisk(\tilde S)$ so that $\tilde s \in \tilde D^\circ$, $\tilde o_\ast \in \tilde D^\circ$, and $\gamma \circ \tilde \iota(D) \subset \tilde D^\circ$. 
Choose
$$\epsilon<\min \big\{\ER(\tilde s_\ast \in \tilde D^\circ), \frac{1}{2} \ER\big(\tilde s_\ast \in \tilde \iota(U)\big), \ER\big(\gamma \circ \tilde \iota(D) \subset \tilde D^\circ\big)\big\}.$$
Let $\tilde B_o$ and $\tilde B_s$ denote the open balls in $\tilde S$ of radius $\epsilon$ about $\tilde o_\ast$ and $\tilde s$, respectively. Observe that $S \in \sM_+(\tilde D, \tilde B_o)$ and 
$\tilde s \in \sE_+(\tilde D, \tilde B_s)$. 

We claim that if $(T,\tilde t)$ satisfies
$T \in \sM_+(\tilde D, \tilde B_o)$ and $\tilde t \in \tilde T \cap \sE_+(\tilde D, \tilde B_s)$,
then $t=p_T(\tilde t) \in \sE_+(D,U)$. Once this is proved, we see that 
$\sM_+(\tilde D, \tilde B_o) \times \sE_+(\tilde D, \tilde B_s)$
intersected with the domain of the projection is an open subset of the domain
which contains $(S,\tilde s)$ and is contained in $p^{-1}\big(\sE_+(D,U)\big)$.

Now we will prove our claim.
Since $\tilde t \in \tilde T \cap \sE_+(\tilde D, \tilde B_s)$,
there is an immersion $\tilde j:\tilde D \imm \tilde T$ and
$\tilde t \in \tilde j(\tilde B_s)$. By postcomposing with the covering map
$p_T$, we also get an immersion $j=p_T \circ \tilde j:\tilde D \imm T$,
and $o_T \in j(\tilde B_o)$ since $T \in \sM_+(\tilde D, \tilde B_o)$.
This means we can find a point $\tilde o_+ \in \tilde j(\tilde B_o) \cap p_T^{-1}(o_T)$. 
Then, there is a vector $\v$ with norm less than $\epsilon$ so that 
$$\tilde o_+=\BE_{\tilde j(\tilde o_\ast)}(\v).$$
Define the function 
$$f: \quad \gamma \circ \tilde \iota(D) \to \tilde D; \quad x \mapsto \BE_x(\v).$$
We claim that $j \circ f \circ \gamma \circ \tilde \iota:D \to T$
is an immersion. To see this note that each map locally respects the translation structure,
and the composition respects the basepoints,
$$j \circ f \circ \gamma \circ \tilde \iota(o_D)=p_T \circ \tilde j \circ f \circ \gamma \circ \tilde \iota(o_D)=p_T(\tilde o_+)=o_T.$$
Since $\tilde t \in \sE_+(\tilde D, \tilde B_s)$, we know that $\tilde t$ is within $\epsilon$
of $\tilde j(\tilde s)$. We also have $\tilde j(\tilde s)=\tilde j \circ \gamma(\tilde s_\ast)$. Therefore, $\tilde j(\tilde s)$ is within $\epsilon$ of 
$\tilde j \circ f \circ \gamma(\tilde s_\ast)$. So, by the triangle inequality,
$\tilde t$ is within $2 \epsilon$ of $\tilde j \circ f \circ \gamma(\tilde s_\ast)$.
Since $2 \epsilon<\ER\big(\tilde s_\ast \in \tilde \iota(U)\big)$, we conclude
that $\tilde t \in \tilde j \circ f \circ \gamma \circ \iota(U)$.
Therefore, $t \in j \circ f \circ \gamma \circ \iota(U)$,
which proves that $t \in \sE_+(D,U)$ as desired.
\end{proof}

The joint continuity of immersions is a consequence of Theorem \ref{thm:covering projection} and work in \cite{HooperImmersions1}.

\begin{proof}[Proof of Proposition \ref{prop:continuity immersions}]
Fix an open disk $U$. Let $\sI(U) \subset \sM$ be the set of $S \in \sM$ so that there is an immersion $\iota_S:U \imm S$. 
Recall that $I_U:\sI(U) \times U \to \sE$ is defined by $I_U(S,u)=\iota_S(u)$.
Let $\tilde \sI(U)=\sI(U) \cap \tM$, and let $\tilde I_U:\tilde \sI(U) \times U \to \tE$ be the restriction
of $I_U$ to $\tilde \sI(U)$. 
Proposition \ref*{I-prop:Continuity of immersions} of \cite{HooperImmersions1} states that for any open disk $U$,
the map $\tilde I_U$ is continuous. The continuity of $I_U$ then follows, because whenever $S \in \sI(U)$, we have $\tilde S \in \tilde \sI(U)$ and
$I_U(S,u)=p_S \circ \tilde I_U(\tilde S,u),$
where $p_S: \tilde S \to S$ is the covering map. Here, we are using both the continuity of the map $S \mapsto \tilde S$ and of the covering projection
(Theorem \ref{thm:covering projection}).
\end{proof}

\subsection{Convergence results}
Before proving our convergence results, we provide a lemma which produces a quotient
translation surface from the set of lifts of the basepoint of the surface to its universal cover.

\begin{lemma}
\name{lem:convergence}
Let $\tilde S \in \tM$ and suppose $\tilde O \subset \tilde S$ satisfies the following statements:
\begin{enumerate}
\item The set $\tilde O$ is discrete as a subset of $\tilde S$.
\item The collection $\Gamma=\{\beta_{\tilde o}:\tilde o \in \tilde O \}$ of basepoint changing isomorphisms forms a group of translation automorphisms of $\tilde S$.
\end{enumerate}
Then, there is a surface $S \in \sM$ with universal cover $\tilde S$ so that $p_S^{-1}(o_S)=\tilde O$. 
\end{lemma}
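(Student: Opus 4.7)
The plan is to construct $S$ as the quotient $\tilde S / \Gamma$ equipped with the induced translation structure, then verify the universal cover and basepoint-fiber statements. Hypothesis (2) implicitly asserts that each $\beta_{\tilde o}$ is a translation automorphism of $\tilde S$ (so $\tilde S = \tilde S^{\tilde o}$ as points of $\tM$), making the action of $\Gamma$ on $\tilde S$ well-defined.

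First I would verify that $\Gamma$ acts freely. Each translation automorphism $\gamma$ of the simply connected planar surface $\tilde S$ is uniquely determined by a vector $\mathbf{v}_\gamma \in \R^2$ satisfying $\dev|_{\tilde S} \circ \gamma = \dev|_{\tilde S} + \mathbf{v}_\gamma$. If $\gamma$ has a fixed point then $\mathbf{v}_\gamma = \mathbf{0}$, so $\dev|_{\tilde S} \circ \gamma = \dev|_{\tilde S}$; by uniqueness of immersions $\gamma$ must then be the identity. Since $\beta_{\tilde o}$ sends $\tilde o$ to $\tilde o_S$, the assignment $\tilde o \mapsto \beta_{\tilde o}$ is injective, and $\beta_{\tilde o} = \id$ only when $\tilde o = \tilde o_S$. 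Consequently no non-identity element of $\Gamma$ has a fixed point.

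The main technical step is proper discontinuity, for which discreteness of $\tilde O$ is essential. Because $\tilde O$ is discrete, pick an isometric Euclidean ball $B_r(\tilde o_S)$ with $B_r(\tilde o_S) \cap \tilde O = \{\tilde o_S\}$. The elements of $\Gamma$ are isometries of the flat metric on $\tilde S$, so a triangle inequality argument shows $B_{r/2}(\tilde o_S)$ is evenly covered at $\tilde o_S$. For an arbitrary point $\tilde p$, pick a path $\alpha$ of length $L$ from $\tilde o_S$ to $\tilde p$; then $\gamma(B_s(\tilde p)) \cap B_s(\tilde p) \neq \emptyset$ forces $d(\tilde o_S, \gamma(\tilde o_S)) \leq 2L + 2s$ by the triangle inequality combined with the isometry property of $\gamma$. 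A sequence of distinct such $\gamma_n$ would produce points $\gamma_n(\tilde o_S) \in \tilde O$ staying in a bounded region of $\tilde S$; passing to the developing map and exploiting that $\tilde O$ is closed and discrete then yields a contradiction.

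With free proper discontinuity established, the quotient $S = \tilde S/\Gamma$ is a topological surface and $p: \tilde S \to S$ is a covering map. I would push the translation atlas of $\tilde S$ down to $S$: on each evenly covered open $U \subset S$, use a lift $\tilde U$ and take the charts of $\tilde S$ restricted to $\tilde U$. Compatibility between different lifts follows because $\Gamma$ acts by translation automorphisms, so charts coming from different lifts differ only by translations on the developed image. Setting $o_S = p(\tilde o_S)$ yields $(S, o_S) \in \sM$. Since $\tilde S$ is simply connected and $p$ is a covering, $\tilde S$ is the universal cover of $S$ and $p = p_S$. Finally $p_S^{-1}(o_S) = \Gamma \cdot \tilde o_S$; using that $\Gamma$ is a group containing each $\beta_{\tilde o}$ and that $\beta_{\tilde o}^{-1}(\tilde o_S) = \tilde o$, this orbit equals $\tilde O$. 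The main obstacle I anticipate is the proper discontinuity step, where possible failure of metric completeness of $\tilde S$ means bounded subsets need not be precompact, so ruling out escape to nonexistent limit points requires careful use of the developing map together with the discreteness hypothesis.
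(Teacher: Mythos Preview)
Your overall strategy—form $S=\tilde S/\Gamma$, check freeness, check proper discontinuity, push down the atlas—matches the paper's, and your treatment of freeness and of the quotient atlas is fine. The gap is exactly the one you flagged yourself at the end: the proper-discontinuity argument does not go through as written, and ``careful use of the developing map'' does not rescue it.

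Concretely, you obtain infinitely many distinct orbit points $\gamma_n(\tilde o_S)\in\tilde O$ lying in the metric ball $\bar B_{2L+2s}(\tilde o_S)$. In an incomplete flat surface this ball need not be precompact, so no subsequence need converge in $\tilde S$, and you get no contradiction with discreteness of $\tilde O$. Passing to the developing map only gives that the translation vectors $\mathbf v_{\gamma_n}=\dev(\gamma_n(\tilde o_S))$ accumulate in $\R^2$; after differencing you get nontrivial $\delta_k\in\Gamma$ with $\mathbf v_{\delta_k}\to 0$, hence points $\tilde o_k'\in\tilde O$ with $\dev(\tilde o_k')\to\dev(\tilde o_S)$. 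But $\dev$ is only a local homeomorphism, so you cannot conclude $\tilde o_k'\to\tilde o_S$ in $\tilde S$; the points $\tilde o_k'$ could sit on different ``sheets'' over the same developed image. So the argument stalls precisely where you suspected.

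The paper closes this gap by a different mechanism that avoids any compactness. Assuming proper discontinuity fails at $\tilde s$, one gets nontrivial $\beta_{\tilde o_n}\in\Gamma$ with $\tilde s_n:=\beta_{\tilde o_n}(\tilde s)\to\tilde s$. Set $\tilde T=\BC(\tilde S,\tilde s)$ and note that $\beta_{\tilde o_n}$ and $\beta_{\tilde s_n}^{-1}\circ\beta_{\tilde s}$ are translation automorphisms of $\tilde S$ sending $\tilde s$ to $\tilde s_n$, hence equal. Therefore
\[
\tilde o_n=\beta_{\tilde o_n}^{-1}(\tilde o_S)=\beta_{\tilde s}^{-1}\bigl(\beta_{\tilde s_n}(\tilde o_S)\bigr).
\]
Now the paper invokes joint continuity of the basepoint changing isomorphism and its inverse on $\tE$ (Theorem~\ref{thm:basepoint disk}, proved in the companion paper) to conclude $\tilde o_n\to\beta_{\tilde s}^{-1}\bigl(\beta_{\tilde s}(\tilde o_S)\bigr)=\tilde o_S$, contradicting discreteness of $\tilde O$ at $\tilde o_S$. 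The point is that this continuity result converts convergence $\tilde s_n\to\tilde s$ directly into convergence $\tilde o_n\to\tilde o_S$ without ever needing bounded sets to be precompact; that is the missing idea in your argument.
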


In the proof, we will use the fact that the basepoint changing map and basepoint changing isomorphism
are continuous when restricted to $\tM$ and $\tE$. The following is a restatement of 
Theorem \ref{I-thm:basepoint change} of \cite{HooperImmersions1}.

\begin{theorem}
\name{thm:basepoint disk}
The restriction of the basepoint changing map,
$\BC|_{\tE}:\tE \to \tM$ is continuous. The basepoint changing isomorphism
$q \mapsto \beta_p(q)$ is jointly continuous when restricted to 
$$\{(p,q) \in \tE^2~:~\pi(p)=\pi(q)\}.$$ 
The inverse basepoint changing isomorphism,
$r \mapsto \beta_p^{-1}(r)$ is continuous when restricted to
$$\{(p,r) \in \tE^2~:~\pi(r)= \BC(p)\}.$$
\end{theorem}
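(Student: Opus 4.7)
The theorem is explicitly presented as a restatement of Theorem \ref*{I-thm:basepoint change} of \cite{HooperImmersions1}. Combined with Subsection \ref{sect:equivalence}, which established that the topologies on $\tM$ and $\tE$ inherited from the immersive topologies on $\sM$ and $\sE$ coincide with the topologies on these spaces used in \cite{HooperImmersions1}, the present theorem follows by direct citation. The remainder of this proposal sketches the underlying strategy, to indicate where the substance lies.

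For continuity of $\BC|_{\tE}: \tE \to \tM$, the plan is to test against the subbasis of $\tM$ provided by Corollary \ref{cor:subbasis M}. For the preimage of $\tM_{\imm}(K)$ with $K$ a closed disk: given $(P_0,p_0)$ with an immersion $\iota_0: K \imm P_0$ sending $o_K$ to $p_0$, enlarge $\iota_0(K)$ into a closed disk $D_0 \in \cdisk(P_0)$ containing $\iota_0(K)$ in its interior with positive safety margin, and take $U_0$ to be an open ball around $p_0$ inside $D_0^\circ$ of radius at most $\ER(\iota_0(K) \subset D_0^\circ)$. For any $(P,p) \in \sE_+(D_0,U_0)$, we obtain an immersion $j: D_0 \imm P$ with $p = j(u)$ for some $u \in U_0$; composing $j$ with the ball-embedding translate of $\iota_0$ by the offset $u - p_0$ yields an immersion $K \imm P^p$, so $\sE_+(D_0,U_0)$ sits inside the preimage. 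For preimages of $\tM_{\not\imm}(U)$ with $U$ an open disk, one exhausts $U$ by closed disks $D_t$ with $U = \bigcup_t D_t$ and argues using the openness results of \cite[Theorem \ref*{I-thm:open}]{HooperImmersions1} in combination with the previous case.

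For joint continuity of $(p,q) \mapsto \beta_p(q)$ on $\{(p,q) \in \tE^2 : \pi(p) = \pi(q)\}$, the plan is to test the subbasis of $\tE$: given a target neighborhood of the form $\sE_+(D,U)$ containing $\beta_p(q) \in P^p$, use continuity of $\BC$ to guarantee $D \imm P_n^{p_n}$ for all large $n$ along any converging sequence $(p_n,q_n) \to (p,q)$, and then apply joint continuity of immersions (Proposition \ref{prop:continuity immersions}) to verify that the image of $q_n$ lies in the immersed copy of $U$ eventually. The inverse basepoint changing isomorphism $\beta_p^{-1}$ is handled symmetrically by swapping the roles of $P$ and $P^p$. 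The most delicate step throughout is the $\tM_{\not\imm}(U)$ case, since it requires non-immersibility of an open disk into $P^p$ to persist under limits in both arguments; this depends crucially on the openness machinery of \cite{HooperImmersions1} interacting correctly with basepoint change.
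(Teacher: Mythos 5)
Your proposal is correct and matches the paper's treatment: the paper gives no internal proof of this theorem, presenting it precisely as a restatement of the basepoint-change theorem of \cite{HooperImmersions1}, which applies verbatim because \S \ref{sect:equivalence} shows the subspace topologies on $\tM$ and $\tE$ agree with the topologies used there. The additional strategy sketch you include goes beyond what this paper contains (the substance lives in the cited companion paper), but the citation together with the equivalence of topologies is exactly the intended justification.
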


\begin{proof}[Proof of Lemma \ref{lem:convergence}]
Our surface $S$ will be the quotient $\tilde S/\Gamma$. First assume that this quotient is a translation surface $S$.
Let $\tilde o_S$ be the basepoint of $\tilde S$. 
By statement (2), the maps $\beta_{\tilde o}$ are all translation automorphisms.
Thus, they are of the form $\beta_{\tilde o}:\tilde S \to \tilde S$. 
Because the identity automorphism must be in $\Gamma$, we know that the base point of $\tilde S$, $\tilde o_S$,
lies in $\tilde O$. Also observe by definition of $\beta$ that $\beta_{\tilde o}(\tilde o)=\tilde o_S$. 
This proves the last remark that $p_S^{-1}(o_S)=\tilde O$, assuming that the quotient is a translation surface.

To prove that the quotient is a translation surface, we will show that each point $\tilde s \in \tilde S$
lies in an open set $U \subset \tilde S$ so that whenever $\beta_1, \beta_2 \in \Gamma$ satisfy $\beta_1(U) \cap \beta_2(U) \neq \emptyset$, we have 
$\beta_1=\beta_2$. Assume that $\tilde s$ does not have this property. Let $B_\epsilon$
denote the open $\epsilon$ ball about $\tilde s$. Then, for each $\epsilon$, there is a distinct pair $\beta_1, \beta_2 \in \Gamma$ so that $\beta_1(B_\epsilon) \cap \beta_2(B_\epsilon) \neq \emptyset$.
In other words, $B_\epsilon \cap \beta_1^{-1} \circ \beta_2(B_\epsilon) \neq \emptyset$.
In particular, the distance from $\tilde s$ to $\beta_1^{-1} \circ \beta_2(\tilde s)$ is less than $2 \epsilon$. 
By statement (2), $\beta_1^{-1} \circ \beta_2 \in \Gamma$. So there is an $\tilde o=\tilde o(\epsilon) \in \tilde O$
so that $\beta_1^{-1} \circ \beta_2 =\beta_{\tilde o}$. Applying this for the sequence $\epsilon=\frac{1}{n}$
produces a sequence $\tilde o_n \in \tilde O$ so that 
$$\lim_{n \to \infty} \beta_{\tilde o_n}(\tilde s)=\tilde s.$$
Let $\tilde s_n=\beta_{\tilde o_n}(\tilde s)$. 
Then, $\beta_{\tilde o_n}$ is a translation automorphism of $\tilde S$ which carries $\tilde s$ to $\tilde s_n$. It follows that
$\BC(\tilde S, \tilde s)=\BC(\tilde S, \tilde s_n)$. Let $\tilde T$ denote this common surface, and consider the maps
$$\beta_{\tilde s}:\tilde S \to \tilde T \and \beta_{\tilde s_n}:\tilde S \to \tilde T.$$
By definition, the first map carries $\tilde s$ to the basepoint of $\tilde T$, and the second carries 
$\tilde s_n$ to the basepoint of $\tilde T$. We claim that 
$$\beta_{\tilde o_n}=\beta_{\tilde s_n}^{-1} \circ \beta_{\tilde s}.$$
By the above remarks, both send $\tilde s$ to $\tilde s_n$. Since there can be only one translation automorphism which does this, they must be equal. We can recover the sequence $\langle \tilde o_n \rangle$ as
$$\tilde o_n=\beta_{\tilde o_n}^{-1}(\tilde o_S)=\beta_{\tilde s}^{-1} \circ \beta_{\tilde s_n}(\tilde o_S).$$
By Theorem \ref{thm:basepoint disk}, we have
$$\lim_{n \to \infty} \tilde o_n=\lim_{n \to \infty} \beta_{\tilde s}^{-1} \circ \beta_{\tilde s_n}(\tilde o_S)=
\beta_{\tilde s}^{-1} \circ \beta_{\tilde s}(\tilde o_S)=\tilde o_S.$$
Since $\beta_{\tilde o_n}$ is never the identity translation automorphism, the existence of the sequence $\langle \tilde o_n \in \tilde O\rangle$ approaching $\tilde o_S$ violates the discreteness of $\tilde O$. (Note that
$\tilde o_S$ is necessarily in $\tilde O$, because the group $\Gamma$ needs an identity element.)
\end{proof}

We now prove our convergence criterion for $\sM$.
\begin{proof}[Proof of Theorem \ref{thm:convergence in M}]
Let $\langle S_n \in \sM\rangle$ be a sequence of translation surfaces converging to $S$.
Let $o \in S$ and $o_n \in S_n$ denote basepoints, and let $p:\tilde S \to S$ and $p_n:\tilde S_n \to S_n$ denote the universal covering maps. 
We will prove that statements (1) to (2) of the theorem hold for $\tilde O=p^{-1}(o)$,

Recall that the map $\upsilon: \sM \to \tM$, which sends a surface to its universal cover, is continuous by Theorem \ref{thm:universal cover}.
Thus $\langle \tilde S_n \rangle$ converges to $\tilde S$.

Choose $\tilde o \in p^{-1}(o)$. To prove (1), we must find a sequence $\tilde o_n \in p_n^{-1}(o_n)$ so that $\langle \tilde o_n \rangle$ converges to $\tilde o$ in $\tE$. Choose a closed disk $D \in \cdisk(\tilde S)$ so that $\tilde o \in D^\circ$. Since $\langle \tilde S_n \rangle$ converges to $\tilde S$,
there is an $N$ so that for $n>N$, there is an immersion $\tilde \iota_n:D \imm \tilde S_n$. Note that $p_n \circ \tilde \iota_n:D \imm S$ is an immersion.
Choose an $\epsilon>0$ small enough
so that the open ball $B_\epsilon$ of radius $\epsilon$ about $\tilde o$ is contained in $D$. Since $\sM_+(D,B_\epsilon)$ is open and contains $S$,
there is an $M>N$ so that $o_n \in p_n \circ \tilde \iota_n(B_\epsilon)$ for $n>M$. It follows that for $n>M$, there is a point in $B_\epsilon$ so that 
$o_n$ is the image of the point under $p_n \circ \tilde \iota_n$. Now let $\epsilon \to 0$. We can choose a closest lift $p_n \in D^\circ$
to $\tilde o$ so that $p_n \circ \tilde \iota_n(p_n)=o_n$ and so that the sequence $\langle p_n \rangle$ converges to $\tilde o$ inside of $D^\circ$.
It then follows from joint continuity of immersions (Proposition \ref{prop:continuity immersions}) that $\tilde o_n=\tilde \iota_n(p_n)$
converges to $\tilde o$ inside of $\tE$.

Let $\langle n_k \rangle$ be an increasing
sequence of positive integers.
Let $\langle \tilde o_{n_k} \in \tilde S_{n_k} \rangle$ be a sequence which converges to $\tilde o \in \tE$ and satisfies $p_{n_k}(\tilde o_{n_k})=o_{n_k}$.
To show (2), we must prove that $p(\tilde o)$ is the basepoint $o \in S$. Since $\langle \tilde S_{n_k} \rangle$ converges to $\tilde S$,
we know by continuity of $\pi:\sE \to \sM$ that $\tilde o \in \tilde S$. We also know that $\langle S_n \rangle$ converges to $S$.
Therefore, by the Projection Theorem $\langle o_{n_k}=p_{n_k}(\tilde o_{n_k})\rangle$ tends to $p(\tilde o)$. Since each $o_{n_k}$ is the basepoint of $S_{n_k}$,
the limit $p(\tilde o)$ must be the basepoint of $S$. (This statement about basepoints follows from Proposition \ref{prop:continuity immersions}, for instance.)

Now we will prove the converse.
Suppose that $\langle S_n \in \sM \rangle$ is a sequence of surfaces.
Let $\langle \tilde S_n \in \tM \rangle$ be the sequence of universal covers, and
let $\tilde S \in \tM$ be another surface in $\tM$. Suppose $\tilde O \subset \tM$ is a discrete subset
satisfying the statements (1) and (2) of the theorem. We will check that this set $\tilde O$ satisfies the two statements of Lemma \ref{lem:convergence}. Observe that the first statement (that $\tilde O$ is discrete),
is tautologically satisfied.

We will now prove the second statement of Lemma \ref{lem:convergence}. 
Pick a $\tilde q \in \tilde O$. We will first show that $\beta_{\tilde q}$ is an automorphism of $\tilde S$.
This requires showing that $\BC(\tilde S, \tilde q)=\tilde S$. 
By statement (1) of the theorem, there is a sequence $\langle \tilde q_n \in p_n^{-1}(o_n) \rangle$
which converges to $\tilde q$. Then by continuity of $\BC$, observe that
$$\BC(\tilde S, \tilde q)=\lim_{n \to \infty} \BC(\tilde S_n, \tilde q_n)=\lim_{n \to \infty} \tilde S_n=\tilde S.$$

It remains to show that $\Gamma=\{\beta_{\tilde q}~:~\tilde q \in \tilde O\}$ forms a group.
First observe that $\Gamma$ contains the identity element, because the basepoint $\tilde o$ of $\tilde S$
lies in $\tilde O$. Observe that the basepoint $\tilde o_n$
of $\tilde S_n$ lies in $p_n^{-1}(o_n)$ for every $n$. Since $\langle \tilde S_n \rangle$ tends to $\tilde S$,
$\langle \tilde o_n \rangle$ tends to $\tilde o$. So by statement (2) of the Theorem, $\tilde o \in \tilde O$. 

Since the identity element lies in $\Gamma$, it suffices to prove that for any pair of elements
$\gamma_1,\gamma_2 \in \Gamma$, we also have $(\gamma_2 \gamma_1)^{-1} \in \Gamma$. We will use the fact
that 
$$\Gamma_n=\{\beta_{\tilde q_n}~:~\tilde q_n \in p_n^{-1}(o_n)\}$$
has this property since it is the deck group of the cover $p_n:\tilde S_n \to S_n$. 
Choose two elements $\tilde q, \tilde r \in \tilde O$. We will take $\gamma_1=\beta_{\tilde q}$ and $\gamma_2=\beta_{\tilde r}$. 
By statement (1) of the theorem, there are sequences $\langle \tilde q_n \in p_n^{-1}(o_n) \rangle$
and $\langle \tilde r_n \in p_n^{-1}(o_n) \rangle$ which converge to $\tilde q$ and $\tilde r$, respectively.
By Theorem \ref{thm:basepoint disk}, we know that 
\begin{equation}
\name{eq:r}
\beta_{\tilde r} \circ \beta_{\tilde q}(\tilde o)=\lim_{n \to \infty} \beta_{\tilde r_n} \circ \beta_{\tilde q_n}(\tilde o_n),
\end{equation}
where $\tilde o$ and $\tilde o_n$ denote the basepoints of $\tilde S$ and $\tilde S_n$, respectively.
Define 
$$\tilde s=\beta_{\tilde r} \circ \beta_{\tilde q}(\tilde o) \and
\tilde s_n=\beta_{\tilde r_n} \circ \beta_{\tilde q_n}(\tilde o_n).$$
Observe that by definition of $\beta$, we have 
$$\beta_{\tilde s}(\tilde s)=\tilde o \and \beta_{\tilde s_n}(\tilde s_n)=\tilde o_n.$$
A translation automorphism is determined by the image of a single point. So, we can conclude that
$$\beta_{\tilde s}=(\beta_{\tilde r} \circ \beta_{\tilde q})^{-1} \and
\beta_{\tilde s_n}=(\beta_{\tilde r_n} \circ \beta_{\tilde q_n})^{-1}.$$
So, it suffices to prove that $\tilde s \in \tilde O$. Since $\Gamma_n$ is a group,
$\beta_{\tilde s_n} \in \Gamma$. It follows that $\tilde s_n \in p_n^{-1}(o_n)$. 
Equation \ref{eq:r} showed that $\langle \tilde s_n \rangle$ converges to $\tilde s$,
so by statement (2) of the theorem, we know that $\tilde s \in \tilde O$. 
This concludes the proof that $\Gamma$ is a group.
\end{proof}

We now prove our criterion for convergence in $\sE$. 
\begin{proof}[Proof of Theorem \ref{thm:convergence in E}]
First suppose that $\langle (S_n,s_n) \in \sE \rangle$ converges to $(S,s) \in \sE$.
Since $\pi:\sE \to \sM$ is continuous, $\langle S_n \rangle$ converges to $S$. 
Now let $p:\tilde S \to S$ be the universal covering map.
Choose an arbitrary $\tilde s \in p^{-1}(s)$ and choose a $D \in \cdisk(\tilde S)$ so that
$\tilde s \in D^\circ$.
Let $\epsilon>0$ be small enough so that the open ball $B_\epsilon \subset \tilde S$ of radius
$\epsilon$ about $\tilde s$ lies in $D^\circ$. Observe that $(S,s) \in \sE_+(D,B_\epsilon)$. 
Since this set is open, there is an $N=N(\epsilon)$ so that $(S_n,s_n) \in \sE_+(D,B_\epsilon)$ for $n>N$.  
That is, there is an immersion $\iota_n:D \imm S_n$ and $s_n \in \iota_n(B_\epsilon)$. 
In other words, $B_\epsilon \cap \iota_n^{-1}(s_n) \neq \emptyset$ for $n>N(\epsilon)$. 
Let $q_n$ be a choice of closest point in $\iota_n^{-1}(s_n)$ to $\tilde s$
for $n>N(1)$. By letting $\epsilon$ tend to zero in the remarks above, we see that $q_n$
converges to $\tilde s$ within $D^\circ$. The immersions $\iota_n$ lift to immersions 
$\tilde \iota_n:D \imm \tilde S_n$.
Let $\tilde s_n=\tilde \iota_n(q_n)$ for $n>N(1)$. We have $\iota_n=p_n \circ \tilde \iota_n$, where $p_n:\tilde S_n \to S_n$ is the covering map. Thus, $p_n(\tilde s_n)=s_n$. By continuity of immersions applied
to the immersions $\iota_n:D \imm S_n$ converging to the restriction
of the identity on $D \subset S$, we see that $\tilde s_n=\tilde \iota_n(q_n)$ tends to $\tilde s$.

Now suppose statements (1) and (2) hold for the sequence $\langle (S_n,s_n) \in \sE \rangle$ 
and $(S,s) \in \sE$. We will prove that the sequence converges to $(S,s)$. 
By statement (1), we know that $\langle S_n \rangle$ converges to $S$. 
By statement (2), there is a sequence $\langle \tilde s_n \in p_n^{-1}(s_n) \rangle$
converging to a point $\tilde s \in p^{-1}(s)$. So, by the Projection Theorem,
$s_n=p_n(\tilde s_n)$ converges to $s=p(\tilde s)$. 
\end{proof}
\section{Basepoint change}
\name{sect:basepoint}
Recall that if $(S,s) \in \sE$, then $\BC(S,s)=S^s \in \sM$ is the translation surface which is isomorphic to
the surface $S$ with the basepoint relocated to $s \in S$. We also have basepoint changing isomorphisms
$\beta_s:S \to S^s$. Our goal here is to prove Theorems \ref{thm:basepoint changing map} and \ref{thm:basepoint changing isomorphism}, which claim that these maps are continuous and jointly continuous, respectively.

We will utilize work done in \cite{HooperImmersions1}, which already proved these results for translation structures on disks. We now make a basic observation that describes how we move to the general case.
Let $(S,s) \in \sE$. Let $p:\tilde S \to S$ be the universal covering map, and choose
$\tilde s \in p^{-1}(s)$. Then, we can consider the basepoint changing isomorphisms
$\beta_s:S \to S^s$ and $\tilde \beta_{\tilde s}:\tilde S \to \tilde S^{\tilde s}$. 
We observe that $\BC(\tilde S,\tilde s)=\tilde S^{\tilde s}$ is the universal cover of
$\BC(S,s)=S^s$. Let $p':\tilde S^{\tilde s} \to S^s$ be the universal covering map.
Then, we have the commutative diagram:
\begin{equation}
\name{eq:bc}
\begin{tikzcd}
\tilde S \arrow{r}{\tilde \beta_{\tilde s}} \arrow{d}{p} & \tilde S^{\tilde s} \arrow{d}{p'} \\
S \arrow{r}{\beta_s} & S^s
\end{tikzcd}
\end{equation}

We now prove that the basepoint changing map, $\BC:(S,s) \mapsto S^s$, is continuous.

\begin{proof}[Proof of Theorem \ref{thm:basepoint changing map}]
Let $\langle (S_n,s_n) \in \sE \rangle$ be a sequence converging to $(S,s)$. 
Let $S_n'=\BC(S_n,s_n) \in \sM$ and $S'=\BC(S,s)$. We need to show that $\langle S_n' \rangle$ converges
to $S'$. 

Let $p_n:\tilde S_n \to S_n$, $p_n':\tilde S_n' \to S_n'$, $p:\tilde S \to S$, and $p':\tilde S' \to S'$ be universal coverings.
By Theorem \ref{thm:convergence in E}, there is a sequence of lifts
$\langle \tilde s_n \in p_n^{-1}(s_n) \rangle$ converging to a $\tilde s \in p^{-1}(s)$. 
By Theorem \ref{thm:basepoint disk},
$\tilde S_n'=\BC(\tilde S_n, \tilde s_n) \in \tM$ converges to $\tilde S'=\BC(\tilde S,\tilde s) \in \tM$. 
Let $\tilde \beta_n:\tilde S_n \to \tilde S_n'$ and $\tilde \beta:\tilde S \to \tilde S'$
be the basepoint changing isomorphisms which carry $\tilde s_n$ and $\tilde s$ to the basepoints of $\tilde S_n'$
and $\tilde S'$, respectively. Theorem \ref{thm:basepoint disk} also implies
that if a sequence $\langle \tilde t_n \in \tilde S_n \rangle$ tends to  $\tilde t \in \tilde S$,
then $\langle \tilde \beta_n(\tilde  t_n) \in \tilde S_n' \rangle$ tends to $\tilde \beta(\tilde t) \in \tilde S'$. In addition,
if a sequence $\langle \tilde t_n' \in \tilde S_n' \rangle$ tends to  $\tilde t' \in \tilde S'$,
then $\langle \tilde \beta_n^{-1}(\tilde t_n') \in \tilde S_n \rangle$ tends to $\tilde \beta^{-1}(\tilde t') \in \tilde S$.

To prove $\langle S_n' \rangle$ converges
to $S$, we will utilize the criterion for convergence given by Theorem \ref{thm:convergence in M}.
We begin by verifying condition (1). Let $\tilde t' \in \tilde S'$ be a lift of the basepoint of $S'$. 
Then, $\tilde t=\tilde \beta^{-1}(\tilde t')$ satisfies $p(\tilde t)=s$, where $p: \tilde S \to S$ is the universal covering map. 
By Theorem \ref{thm:convergence in E}, there is a sequence $\langle \tilde t_n \in p_n^{-1}(s_n) \rangle$
which converges to $\tilde t \in \tilde S$. By Theorem \ref{thm:basepoint disk},
the sequence $\langle \tilde t_n'=\tilde \beta_n(\tilde t_n) \rangle$ converges to $\tilde t'=\tilde \beta(\tilde t)$. By the commutative diagram in equation \ref{eq:bc}, each $p_n'(\tilde t_n')$ is the basepoint of $S_n'$. Thus, the fact that 
$\langle \tilde t_n' \rangle$ tends to $\tilde t'$ verifies condition (1).

Now we will check condition (2). Let $\langle n_k \rangle$ be an increasing sequence of integers.
Let $\langle \tilde t_{n_k}' \in \tilde S_{n_k}' \rangle$ be a sequence of lifts of the basepoints of $S_{n_k}'$,
and suppose that the sequence converges to $\tilde t' \in \tE$. Since $\langle \tilde S_{n_k}' \rangle$ converges to $\tilde S'$, we know that $\tilde t' \in \tilde S'$. Let $\tilde t_{n_k}=\tilde \beta_{n_k}^{-1}(\tilde t_{n_k}')$
and $\tilde t=\tilde \beta^{-1}(\tilde t')$. By Theorem \ref{thm:basepoint disk},
$\langle \tilde t_{n_k} \rangle$ tends to $\tilde t$. By the commutative diagram in equation \ref{eq:bc},
each $p_{n_k}(\tilde t_{n_k})=s_{n_k}$. Therefore, the Projection Theorem implies
$p(\tilde t)=s$. Then again by commutativity, $\tilde t'=\tilde \beta(\tilde t)$ is a lift of the basepoint
of $S'$. This verifies condition (2).
\end{proof}

We will now prove the joint continuity of the basepoint changing isomorphism and its inverse.
\begin{proof}[Proof of Theorem \ref{thm:basepoint changing isomorphism}]
Let $\langle (S_n,s_n) \in \sE \rangle$ be a sequence converging to $(S,s) \in \sE$. 
Let $S_n'=\BC(S_n,s_n)$ and $S'=\BC(S,s)$. 
Let $\beta_n:S_n \to S'_n$ and $\beta:S \to S'$ be the associated basepoint changing isomorphisms.

To show that the basepoint changing isomorphism is jointly continuous, it suffices to prove that if
$\langle t_n \in S_n \rangle$ converges to $t \in S$, then $\langle \beta_n(t_n) \rangle$ converges to $\beta(t)$. 

We consider the universal covers using notation from the prior proof.
By Theorem \ref{thm:convergence in E}, there are sequences of lifts
$\langle \tilde s_n \in p_n^{-1}(s_n) \rangle$ converging to a $\tilde s \in p^{-1}(s)$ 
and $\langle \tilde t_n \in p_n^{-1}(s_n) \rangle$ converging to a $\tilde t \in p^{-1}(s)$.
Let $\tilde S_n'=\BC(\tilde S_n, \tilde s_n)$ and $\tilde S'=\BC(\tilde S,\tilde s)$. 
Let $\tilde \beta_n:\tilde S_n \to \tilde S_n'$ and $\tilde \beta:\tilde S' \to \tilde S'$
be the associated basepoint changing isomorphisms. 
By Theorem \ref{thm:basepoint disk}, we know that $\langle \tilde \beta_n(\tilde t_n) \in \tilde S_n' \rangle$ converges to $\tilde \beta(\tilde t)$. 
By the Theorem \ref{thm:basepoint changing map}, we know that $\langle S_n' \rangle$ tends to $S'$
in $\sM$. So we can apply the Projection Theorem
to conclude that $\langle p'_n \circ \tilde \beta_n(\tilde t_n) \in S_n' \rangle$ tends to
$p' \circ \tilde \beta(\tilde t) \in S'$. Then by the commutative diagram in equation \ref{eq:bc}, we see this is the same as
saying that $\langle \beta_n(t_n) \rangle$ converges to $\beta(t)$. 

Theorem \ref{thm:basepoint disk} also states
that the inverse basepoint changing isomorphism is jointly continuous for translation structures on the disk.
With small modifications, the above argument proves that if $\langle t'_n \in S'_n \rangle$ converges to $\langle t' \in S' \rangle$,
then $\langle \beta_n^{-1}(t'_n) \in S_n \rangle$ converges to $\langle \beta^{-1}(t') \in S \rangle$.
\end{proof}
\section{Affine actions}
\name{sect:affine}
The $\GL(2,\R)$ actions behave naturally with respect to universal covering maps. Let $\upsilon:\sM \to \tM$
be the map which sends $S \in \sM$ to its universal cover $\tilde S \in \tM$. The action of any $A \in \GL(2,\R)$ on $\sM$ satisfies the following commutative diagram:
\begin{equation}
\name{eq:A on M}
\begin{tikzcd}[column sep=scriptsize, row sep=scriptsize]
\sM \arrow{r}{A} \arrow{d}{\upsilon} & \sM \arrow{d}{\upsilon} \\
\tM \arrow{r}{A} & \tM
\end{tikzcd}
\end{equation}
Also recall that $\sP$ is the domain of the covering projection
and consists of those pair $(S,\tilde s)$ with $\tilde s \in \tilde S$. The covering projection $p:\sP \to \sE$ sends $(S,\tilde s)$ to its image $p_S(\tilde s)$
under the covering map $p_S:\tilde S \to S$. Because of the diagram above, the $\GL(2,\R)$ actions on $\sM$ and $\sE$ induce an action on $\sP$. We have the following commutative diagram:
\begin{equation}
\name{eq:A on P}
\begin{tikzcd}[column sep=scriptsize, row sep=scriptsize]
\sP \arrow{r}{A} \arrow{d}{p} & \sP \arrow{d}{p} \\
\sE \arrow{r}{A} & \sE
\end{tikzcd}
\end{equation}

\begin{proof}[Proof of Theorem \ref{thm:affine}]
Let $\langle A_n \in \GL(2,\R) \rangle$ be a sequence tending to $A \in \GL(2,\R)$. Let $\langle (S_n,s_n) \in \sE \rangle$ be a sequence
tending to $(S,s) \in \sE$. We will show that $\langle S_n'=A_n(S_n)\rangle$ converges to $S'=A(S)$ in $\sM$, and $\langle s_n'=A_n(S_n,s_n)\rangle$ tends to $s'=A(S,s)$ in $\sE$. 

First we address convergence of $\langle S_n'\rangle$ to $S$. By Theorem \ref*{I-thm:H continuous} of \cite{HooperImmersions1}, we know that the $\GL(2,\R)$
action on $\tM$ is continuous. Then using the continuity of $\upsilon:\sM \to \tM$ and commutativity provided by equation \ref{eq:A on M}, we see
that the universal covers $\langle \tilde S_n' \rangle$ converge to $\tilde S'$ in $\tM$. 

To show $\langle S_n'\rangle$ to $S$, we will use the convergence criterion of Theorem \ref{thm:convergence in M}.
Consider statement (1). We will use $p_n$, $p_n'$, $p$ and $p'$ for universal covering maps associated to $S_n$, $S_n'$, $S$ and $S'$, respectively.
Let $\tilde o' \in p'^{-1}(o_{S'}) \subset \tilde S'$. Let $\tilde o=A^{-1}(\tilde o') \in \tilde S$. Then by the commutative diagram given in equation \ref{eq:A on P}, $\tilde o \in p^{-1}(o_S)$. So by Theorem \ref{thm:convergence in M}, there is a sequence of points $\langle \tilde o_n \in p_n^{-1}(o_{S_n}) \rangle$
converging to $\tilde o$. By Theorem \ref*{I-thm:H continuous} of \cite{HooperImmersions1}, the $\GL(2,\R)$ action on $\tE$ is continuous. Therefore,
$\langle \tilde o_n'=A_n(\tilde o_n) \rangle$ converges to $\tilde o'$. Further, by commutativity we have that
$p_n'(\tilde o_n')$ is the basepoint of $S_n'$. This verifies statement (1).

Now we verify statement (2) of Theorem \ref{thm:convergence in M}. Fix an increasing sequence of positive integers, $\langle n_k \rangle$. We will abuse notation by using $k$ to abbreviate $n_k$. Suppose that there is a sequence $\langle \tilde o_k' \in p_k'^{-1}(o_{S_k'}) \rangle$ which converges to some point $\tilde o' \in \tE$. We must show that $p'(\tilde o')=o_{S'}$. Since $\langle \tilde S_k' \rangle$ converges to $\tilde S'$, we know that $\tilde o' \in \tilde S'$. By continuity of the affine action on $\tE$, we know that $\langle \tilde o_k=A_{k}^{-1}(\tilde o_k')\rangle$ converges to $\tilde o=A^{-1}(\tilde o')$. By commutative diagram \ref{eq:A on P}, we know that $p_k(\tilde o_k)=o_{S_k}$. So, by Theorem \ref{thm:convergence in M} applied to convergence of $\langle S_k \rangle$ to $S$,
we know that $p(\tilde o)=o_S$. Then again by commutativity, we see that $p'(\tilde o')=A(o_S)=o_{S'}$. This verifies statement (2),
and concludes the proof that $\langle S_n'\rangle$ converges to $S$.

It remains to show that $\langle s_n' \rangle$ converges to $s'$. For this we would use Theorem \ref{thm:convergence in E}.
Statement (1) of this theorem has already been proved above. The second statement is proved in an almost identical way to the proof of criterion (1)
of Theorem \ref{thm:convergence in M} given two paragraphs above.
\end{proof}

\section{The compactness theorem}
\name{sect:compactness} 

\begin{proof}[Proof of Theorem \ref{thm:compactness}]
Since the space $\sM$ is second-countable, sequential compactness is equivalent to compactness.
Let $\langle S_n \rangle$ be a sequence in $\sM \smallsetminus \sM_{\not \emb}(U)$. 
We will find a convergent subsequence. We note that it suffices to consider the case when $U$
is isometric to an open Euclidean metric ball with arbitrary radius $\epsilon>0$ 
and basepoint at its center.

Consider the map $\upsilon: \sM \to \tM$ which sends a surface to its universal cover.
This map is continuous and 
$$\upsilon\big(\sM \smallsetminus \sM_{\not \emb}(U)\big) = \tM \smallsetminus \tM_{\not \emb}(U).$$
Note that $\tM \smallsetminus \tM_{\not \imm}(U)$ is compact by Theorem \ref*{I-thm:compactness} of \cite{HooperImmersions1}. 
Since $\tM \smallsetminus \tM_{\not \emb}(U) \subset \tM \smallsetminus \tM_{\not \imm}(U)$,
this set is also compact. Thus, we can extract a convergent subsequence of $\langle \tilde S_n \rangle$
which converges to some surface $\tilde S \in \tM \smallsetminus \tM_{\not \emb}(U)$. We can
assume by passing to such a subsequence that $\langle \tilde S_n \rangle$ converges to $\tilde S$.

We will explicitly find a convergent subsequence, which we will describe as an algorithm. The algorithm can be interpreted as an inductive sequence of definitions. In order to describe the algorithm, we
first pick a sequence of open metric balls of radius less than $\frac{\epsilon}{2}$, $\langle B_i \subset \tilde S \rangle_{i \in \N}$, so that $\tilde S=\bigcup_{i \in \N} B_i$. The purpose of the algorithm
is to find a convergent subsequence $\langle S_{n_k} \rangle$ of $\langle S_n \rangle$. So, we will define
an increasing sequence of natural numbers $\langle n_k \rangle$. We will also construct a
subset $\sI \subset \N$ and a collection $\tilde O=\{\tilde o_i \in \tilde S~:~i \in \sI\}$ indexed with repeats.
The collection $\tilde O \subset \tilde S$ will later be the collection of lifts of the basepoint of the limiting surface $S$. After describing the algorithm, we will prove several statements about the objects produced,
and define $S$. 

We now specify some notation. We let $p_n:\tilde S_n \to S_n$ denote the universal covering maps.
We denote the basepoint of $S_n$ by $o_n$. 

The following is the aforementioned algorithm:
\begin{enumerate}
\item Set $N_0=\N=\{1,2,3, \ldots \}$.
\item Set $\sK=\emptyset \subset \N$.
\item Set $\tilde O =\emptyset \subset \tilde S$.
\item Evaluate the following statements for each integer $k \geq 1$ in order of increasing $k$:
\begin{enumerate}
\item If there is an increasing sequence of integers $\langle m_j=m_j(k) \in N_{k-1}~:~j \in \N\rangle$
and a sequence $\langle \tilde o^k_{m_j} \in p_{m_j}^{-1}(o_{m_j}) \rangle$ which converges to a point
$\tilde o^k \in B_k$, then choose such a sequence and evaluate the following:
\begin{enumerate}
\item Add $\tilde o^k$ to the set $\tilde O \subset \tilde S$. 
\item Add $k$ to the set $\sK \subset \N$. 
\item Define $n_k=m_1(k)=\min \{m_j(k)~:~j \in \N\}$. 
\item Define $N_k=\{m_j(k):j \in \N\} \smallsetminus \{n_k\} \subset N_{k-1}$.
\end{enumerate}
\item Otherwise (if there are no such sequences $\langle m_j \rangle$ and $\langle \tilde o^k_{n_j} \rangle$), evaluate the following statements:
\begin{enumerate}
\item[(v)] Define $n_k=\min N_{k-1}$.
\item[(vi)] Define $N_k=N_{k-1} \smallsetminus \{n_k\}$.
\end{enumerate}
\end{enumerate}
\end{enumerate}
We will show that the subsequence $\langle S_{n_k} \rangle$ converges by using the criterion from Theorem \ref{thm:convergence in M} applied to the constructed subset $\tilde O = \{\tilde o^k:k \in \sK\}$.

We now consider the first statement of Theorem \ref{thm:convergence in M}. Choose a $\tilde o^i \in \tilde O$. 
Then $i \in \sK$, and there is a sequence $\langle m_j(i) \rangle$ and a choice of $\langle \tilde o^i_{m_j(i)} \in p_{m_j}^{-1}(o^i_{m_j(i)}) \rangle$ which converges to $\tilde o^i$. By construction, our constructed sequence $\langle n_k ~:~ k \geq i \rangle$ lies in $N_k$ and thus is a subsequence of $\langle m_{j}(i)~:~j \in \N\rangle$.
In particular, $\langle \tilde o^i_{n_k}~:~k \geq i \rangle$ converges to $\tilde o^i$. This proves that
statement (1) of Theorem \ref{thm:convergence in M} holds.

Now we turn our attention to statement (2). Suppose that $\langle k(\ell):\ell \in \N \rangle$ is an increasing sequence in $\N$, and that there is a sequence $\langle \tilde o_{n_{k(\ell)}} \rangle$ which converges in $\tE$. Let $\tilde o$ be the limit point, which must lie in $\tilde S$. We must show that $\tilde o \in \tilde O$. Since $\{B_i\}$ was a covering of $\tilde S$, there is an $i$ so that $\tilde o \in B_i$. As above,
we note that $\langle n_k:k \geq i\rangle$ is a subsequence of $N_{k-1}$. In particular, there is a subsequence
of $N_{k-1}$ which converges to a point in $B_i$. So, in step (a), we must have chosen a sequence 
$\langle m_j=m_j(i) \in N_{j-1} \rangle$ and a sequence $\langle \tilde o^i_{m_j} \rangle$ which converges
to a $\tilde o^i \in B_i$. We will show that $\tilde o=\tilde o^i$, which will prove that $\tilde o \in \tilde O$. 
Again note that $\langle n_k:k \geq i\rangle$ is a subsequence of $\langle m_j \rangle$. The sequence $\langle n_{k(\ell)}:\ell \in \N\rangle$ is a further subsequence. Therefore, we have 
$$\lim_{\ell \to \infty} \tilde o_{n_{k(\ell)}}=\tilde o \and
\lim_{\ell \to \infty} \tilde o^i_{n_{k(\ell)}}=\tilde o^i.$$
Assume that $\tilde o \neq \tilde o^i$. Choose a $D \in \cdisk(\tilde S)$ so that $B_i \subset D^\circ$. 
Since the limit points are distinct, we can choose disjoint open sets $U \subset B_i$ and $V \subset B_i$
so that $\tilde o \in U$ and $\tilde o^i \in V$. Since $\langle \tilde S_{n_{k(\ell)}} \rangle$ converges to $S$,
there is an $L_1$ so that for $\ell>L_1$, we have an immersion $\iota_\ell:D \imm S_{n_{k(\ell)}}$.
Now observe that $\tilde o \in \sE_+(D,U)$. Therefore, there is an $L_2>L_1$ so that
for $\ell>L_2$, we have $\tilde o_{n_{k(\ell)}}\in \iota_\ell(U)$. Similarly, there is an $L_3>L_2$ so that
for $\ell>L_3$, we have $\tilde o^i_{n_{k(\ell)}}\in \iota_\ell(V)$. In particular, 
$$\tilde o_{n_{k(\ell)}}, \tilde o^i_{n_{k(\ell)}} \in \iota_\ell(B_i) \quad \text{for $\ell>L_3$.}$$
Since $B_i$ is a ball of radius less than $\frac{\epsilon}{2}$, these points are distance less
than $\epsilon$ apart in $\tilde S_{n_{k(\ell)}}$. But, since $U$, a Euclidean open ball of radius $\epsilon$,
embeds into $\tilde S_{n_{k(\ell)}}$ about the basepoint, it must be that these points are equal when $\ell>L_3$. 
Therefore the limits of these sequences are the same, and $\tilde o=\tilde o^i \in \tilde O$.
\end{proof}

\bibliographystyle{amsalpha}
\bibliography{/home/pat/active/my_papers/bibliography}

\newcommand{\cn}[1]{{\bf [#1]:}} \def\scr{\mathcal} \def\cprime{$'$}
\providecommand{\bysame}{\leavevmode\hbox to3em{\hrulefill}\thinspace}
\providecommand{\MR}{\relax\ifhmode\unskip\space\fi MR }
\providecommand{\MRhref}[2]{%
  \href{http://www.ams.org/mathscinet-getitem?mr=#1}{#2}
}
\providecommand{\href}[2]{#2}
\begin{thebibliography}{{Tab}05}

\bibitem[Bow13]{Bowman13}
Joshua~P. Bowman, \emph{The complete family of {A}rnoux-{Y}occoz surfaces},
  Geometriae Dedicata \textbf{164} (2013), no.~1, 113--130 (English).

\bibitem[BV13]{BV13}
Joshua~P. Bowman and Ferr{\'a}n Valdez, \emph{Wild singularities of flat
  surfaces}, Israel J. Math. \textbf{197} (2013), no.~1, 69--97. \MR{3096607}

\bibitem[{Del}13]{Delecroix13}
Vincent {Delecroix}, \emph{{Divergent trajectories in the periodic wind-tree
  model.}}, {J. Mod. Dyn.} \textbf{7} (2013), no.~1, 1--29 (English).

\bibitem[DHL11]{DHL11}
Vincent Delecroix, Pascal Hubert, and Samuel Leli\`evre, \emph{Diffusion for
  the periodic wind-tree model}, preprint \url{http://arxiv.org/abs/1107.1810},
  2011.

\bibitem[FU11]{FUpreprint}
Krzysztof Fr{\c{a}}czek and Corinna Ulcigrai, \emph{Non-ergodic {Z}-periodic
  billiards and infinite translation surfaces}, preprint
  \url{http://arxiv.org/abs/1109.4584}, 2011.

\bibitem[FU12]{FUstrip}
Krzysztof Fraczek and Corinna Ulcigrai, \emph{Ergodic directions for billiards
  in a strip with periodically located obstacles}, 2012.

\bibitem[HHW13]{HHW10}
W.~Patrick Hooper, Pascal Hubert, and Barak Weiss, \emph{Dynamics on the
  infinite staircase}, Discrete and Continuous Dynamical Systems - Series A
  \textbf{33} (2013), no.~9, 4341--4347.

\bibitem[HLT11]{HLT11}
Pascal Hubert, Samuel Leli\`evre, and Serge Troubetzkoy, \emph{{The {E}hrenfest
  wind-tree model: Periodic directions, recurrence, diffusion.}}, J. Reine
  Angew. Math. \textbf{656} (2011), 223--244 (English).

\bibitem[Hoo10]{Hinf}
W.~Patrick Hooper, \emph{The invariant measures of some infinite interval
  exchange maps}, preprint, \url{http://arxiv.org/abs/1005.1902}, 2010.

\bibitem[Hoo13a]{HooperImmersions1}
W.~Patrick Hooper, \emph{Immersions and translation structures on the disk},
  2013, preprint, \href{http://arxiv.org/abs/1309.4795v3}{arXiv:1309.4795v3}.

\bibitem[Hoo13b]{Higl1}
\bysame, \emph{An infinite surface with the lattice property {I}: {V}eech
  groups and coding geodesics}, to appear in Transactions of the AMS,
  \url{http://arxiv.org/abs/1011.0700}, 2013.

\bibitem[HS10]{HS09}
Pascal Hubert and Gabriela Schmith{\"u}sen, \emph{Infinite translation surfaces
  with infinitely generated {V}eech groups}, J. Mod. Dyn. \textbf{4} (2010),
  no.~4, 715--732. \MR{2753950 (2012e:37075)}

\bibitem[HW12]{HW10}
W.~Patrick Hooper and Barak Weiss, \emph{{Generalized staircases: recurrence
  and symmetry.}}, Ann. Inst. Fourier \textbf{62} (2012), no.~4, 1581--1600
  (English. French summary).

\bibitem[HW13]{HW13}
Pascal {Hubert} and Barak {Weiss}, \emph{Ergodicity for infinite periodic
  translation surfaces}, {Compos. Math.} \textbf{149} (2013), no.~8, 1364--1380
  (English).

\bibitem[MT02]{MT}
Howard Masur and Serge Tabachnikov, \emph{Rational billiards and flat
  structures}, Handbook of dynamical systems, Vol.\ 1A, North-Holland,
  Amsterdam, 2002, pp.~1015--1089. \MR{1928530 (2003j:37002)}

\bibitem[PSV11]{PSV11}
Piotr {Przytycki}, Gabriela {Schmith\"usen}, and Ferr\'an {Valdez},
  \emph{{Veech groups of Loch Ness monsters.}}, {Ann. Inst. Fourier}
  \textbf{61} (2011), no.~2, 673--687 (English).

\bibitem[RT12]{RTarxiv12}
David Ralston and Serge Troubetzkoy, \emph{Ergodic infinite group extensions of
  geodesic flows on translation surfaces}, Journal of Modern Dynamics
  \textbf{6} (2012), no.~4, 477--497.

\bibitem[RT13]{RTarxiv11}
\bysame, \emph{Ergodicity of certain cocycles over certain interval exchanges},
  Discrete and Continuous Dynamical Systems \textbf{33} (2013), 2523--2529.

\bibitem[Sch11]{Schmoll11preprint}
Martin Schmoll, \emph{Veech groups for holonomy free torus covers}, Journal of
  Topology and Analysis (2011), to appear.

\bibitem[{Tab}05]{T05}
Serge {Tabachnikov}, \emph{{Geometry and billiards.}}, Providence, RI: American
  Mathematical Society (AMS), 2005 (English).

\bibitem[Thu97]{Thurston}
William~P. Thurston, \emph{Three-dimensional geometry and topology. {V}ol. 1},
  Princeton Mathematical Series, vol.~35, Princeton University Press,
  Princeton, NJ, 1997, Edited by Silvio Levy. \MR{1435975 (97m:57016)}

\bibitem[Tre12]{TrevinoFinite}
Rodrigo Trevi{\~n}o, \emph{On the ergodicity of flat surfaces of finite area},
  2012, preprint, \url{http://arxiv.org/abs/1211.1313}.

\bibitem[Tro10]{Troubetzkoy10}
Serge Troubetzkoy, \emph{{Typical recurrence for the Ehrenfest wind-tree
  model.}}, J. Stat. Phys. \textbf{141} (2010), no.~1, 60--67 (English).

\bibitem[Vee89]{V}
W.~A. Veech, \emph{Teichm\"uller curves in moduli space, {E}isenstein series
  and an application to triangular billiards}, Invent. Math. \textbf{97}
  (1989), no.~3, 553--583. \MR{1005006 (91h:58083a)}

\bibitem[ZK75]{ZK}
A.~N. Zemljakov and A.~B. Katok, \emph{Topological transitivity of billiards in
  polygons}, Mat. Zametki \textbf{18} (1975), no.~2, 291--300. \MR{0399423 (53
  \#3267)}

\bibitem[Zor06]{Zorich06}
Anton Zorich, \emph{Flat surfaces}, Frontiers in number theory, physics, and
  geometry. I, Springer, Berlin, 2006, pp.~437--583. \MR{2261104 (2007i:37070)}

\bibitem[Zvo12]{Zvonkine12}
Dimitri Zvonkine, \emph{An introduction to moduli spaces of curves and their
  intersection theory}, Handbook of {T}eichm\"uller theory. {V}olume {III},
  IRMA Lect. Math. Theor. Phys., vol.~17, Eur. Math. Soc., Z\"urich, 2012,
  pp.~667--716. \MR{2952773}

\end{thebibliography}
\end{document}